\pdfoutput=1
\documentclass[10pt]{amsart}

\usepackage[bookmarks,colorlinks=false,backref=false,plainpages=false]{hyperref}
\usepackage{geometry}
\usepackage{amsmath,amssymb,amsthm}
\usepackage{graphicx}
\graphicspath{{figs/}}
\usepackage{placeins}

\newtheorem{thm}{Theorem}[section]
\newtheorem{cor}[thm]{Corollary}
\newtheorem{lem}[thm]{Lemma}
\newtheorem{prop}[thm]{Proposition}
\theoremstyle{definition}
\newtheorem{assum}[thm]{Assumption}
\theoremstyle{remark}
\newtheorem{rem}[thm]{Remark}
\theoremstyle{plain}

\usepackage{amsaddr}

\numberwithin{equation}{section}

\renewcommand\o{\omega}
\renewcommand\O{\Omega}
\def\G{\Gamma}
\def\p{\partial}
\def\half{\frac{1}{2}}
\newcommand{\bsigma}{\mbox{\boldmath$\sigma$}}
\newcommand{\btau}{\mbox{\boldmath$\tau$}}
\newcommand{\bchi}{\mbox{\boldmath$\chi$}}
\newcommand{\bbeta}{\mbox{\boldmath$\beta$}}
\def\bb{{\bf b}}
\def\bff{{\bf f}}
\def\bn{{\bf n}}
\def\bx{{\bf x}}
\def\by{{\bf y}}
\def\bzero{{\bf 0}}
\def\bRT{{\mathbf{RT}}}
\def\cC{{\mathcal{C}}}
\def\cE{{\mathcal{E}}}
\def\cT{{\mathcal{T}}}
\newcommand{\gradt}{\nabla\cdot}
\def\divvr{\operatorname{div}}
\newcommand{\tr}{\operatorname{tr}}
\DeclareMathOperator{\osc}{osc}
\newcommand{\tri}{|\!|\!|}
\newcommand{\beq}{\begin{equation}}
\newcommand{\eeq}{\end{equation}}
\def\H00{H^{1/2}_{00}}
\def\Hm00{H^{-1/2}_{00}}

\title[LSFEM with weakly imposed boundary conditions]{Least-Squares Finite Element
Methods with Weakly Imposed Boundary Conditions: Norm and Error Equivalences,
Over-Penalization, and Fully Discontinuous Discretizations}
\author[S. Zhang]{Shun Zhang}
\address{Department of Mathematics, City University of Hong Kong, Kowloon Tong, Hong Kong, China}
\email{shun.zhang@cityu.edu.hk}
\thanks{This work was supported in part by
Research Grants Council of the Hong Kong SAR, China, under the GRF Grant Project No. CityU 11316222, CityU 11305025}
\date{\today}
\keywords{least-squares finite element method, weakly imposed boundary conditions,
mixed boundary conditions, a posteriori error estimation, discontinuous
least-squares method, over-penalization}
\subjclass[2020]{65N30, 65N15, 65N50}

\begin{document}

\begin{abstract}
Weakly imposed boundary conditions in first-order system least-squares methods
go back to Jespersen (1977), but two questions have remained open for decades:
the right
weights, especially for the Neumann residual, whose natural norm $H^{-1/2}$
suggests $h_F$, and norm and error equivalences, especially for indefinite
equations.  We answer both for general, possibly indefinite, second-order
elliptic equations with non-homogeneous mixed boundary conditions.  First, for
the weights $h_F^{-1}$ on the Dirichlet and $1$ on the Neumann residual, we
prove norm and error equivalences with mesh-independent constants
for every discrete pair: the error in a mesh-dependent norm is equivalent to
the square root of the functional plus a Dirichlet data term.  The
least-squares solution converges optimally in
$H(\mathrm{div};\Omega)\times H^{1}(\Omega)$, and the functional is a reliable
and efficient error estimator, with a recovered flux also for approximations
from other methods, even ones violating the boundary conditions.  The key idea
is to apply the stability of the boundary value problem to pairs that are
discrete only in their Dirichlet traces, whose $H^{1/2}$ norm an inverse
inequality bounds by the weighted penalty.  Second, these weights are the right
ones: weaker powers, including $h_F$ on the Neumann boundary, destroy the norm
equivalence, and stronger weights are admissible, since the penalty multiplies
zero at a comparison function meeting the projected data.  Third, for the fully
discontinuous counterpart, a pure penalty method with both jumps weighted by
$h_F^{-1}$, the error equivalence holds for every discrete pair, also for exact
fluxes with normal traces not square integrable on interior faces.
\end{abstract}

\maketitle

\section{Introduction}
\label{sec:intro}

First-order system least-squares methods recast a second-order elliptic
equation as a first-order system and minimize a sum of squared residuals
\cite{CLMM:94}.  In the resulting least-squares finite element method (LSFEM)
the value of the functional is a built-in a posteriori error estimator, not
only for the least-squares solution but for every conforming approximation
that satisfies the boundary conditions.  This rests on the coercivity of the
functional.  For general, possibly indefinite, equations
it was obtained through compactness arguments \cite{BLP:97,Cai:04,CFZ:15},
through stability estimates of the boundary value problem \cite{Ku:07}, and
directly from the inf--sup stability of the variational problem under the
single assumption that the homogeneous problem has only the trivial solution
\cite{Zhang:23}, where right-hand sides in $H^{-1}(\O)$ are absorbed into the
flux.  This paper concerns the non-homogeneous mixed boundary conditions
$u=\phi$ on $\G_D$ and $\bsigma\cdot\bn=\psi$ on $\G_N$, with $\bsigma$ the
flux.

We add the boundary residuals to the functional in face-wise weighted $L^{2}$
norms,
$$
\sum_{F\in\cE_D}h_F^{-1}\|v-\phi\|_{0,F}^{2}+\|\btau\cdot\bn-\psi\|_{0,\G_N}^{2},
$$
so that the trial spaces carry no boundary constraints and the exact data,
which need only satisfy $\phi\in H^{1/2}(\G_D)$ and $\psi\in L^{2}(\G_N)$,
enter directly.

Imposing the boundary conditions weakly is long known.  Least-squares methods
whose trial functions need not satisfy the boundary conditions go back to
Bramble and Schatz \cite{BS:71}.  For the first-order system, Jespersen
\cite{Jes:77} added the Dirichlet residual with the weight $h^{-1}$ for the
Poisson equation and analyzed the method in the mesh-dependent norm induced by
the functional, and Bensow and Larson \cite{BL:05} used the weights $h_F^{-1}$
on $\G_D$ and $1$ on $\G_N$, together with $h_F^{-1}$ on the interior jumps,
for the diffusion equation.  Two questions have nevertheless remained open for
decades.  The first is which weights are right.  The natural norms of the
boundary residuals are those of $H^{1/2}(\G_D)$ and $H^{-1/2}_{00}(\G_N)$, and
their scaling suggests the weights $h_F^{-1}$ and $h_F$; for the Neumann
residual in particular, the right weight has been unclear.  The second is a
norm equivalence, and with it an error equivalence, in particular for
indefinite equations.  The argument of \cite{BL:05} relies on the coercivity
of the weak form and does not extend to indefinite equations, it assumes that
the boundary data are finite element functions, and it gives coercivity only
on the finite element space; in \cite{BL:05} the functional serves as a
refinement indicator without a proof of reliability, and the fully
discontinuous variant \cite[Sect.~3.3.1]{BL:05} is stated without analysis.

Much of the literature has instead imposed the boundary conditions in the
natural norms: through multilevel boundary functionals \cite{Sta:99}, through
additional equations measured in the natural norms \cite{Ste:14}, or through
dual norms that lead to saddle-point and minimax problems \cite{MSSte:26}.  For
$\phi\in H^{1/2}(\G_D)$ and $\psi\in L^{2}(\G_N)$ none of these constructions
is needed: the weights $h_F^{-1}$ on $\G_D$ and $1$ on $\G_N$ are the right
ones, and they give the norm equivalence for
general, possibly indefinite, equations, an optimal a priori estimate, and a
reliable and efficient error estimator.

The paper makes three contributions.  The first and main one is the norm
equivalence and the error equivalence, with mesh-independent constants, for
general, possibly indefinite, equations; these are the first such
equivalences for boundary conditions weakly imposed in weighted $L^{2}$
norms.  They hold for every discrete pair, whatever method
produced it, and not only for the least-squares solution.  Since the functional
depends on the mesh, a compactness argument does not give mesh-independent
constants; following \cite{Zhang:23}, we derive the lower bound from the
stability of the boundary value problem and bound the natural norm of each
boundary residual by its weighted $L^{2}$ norm.  On $\G_N$ this is the
embedding $L^{2}(\G_N)\hookrightarrow H^{-1/2}_{00}(\G_N)$.  The key idea
concerns $\G_D$: we prove the norm equivalence for all pairs whose Dirichlet
traces lie in the space $W^{D}_{h}$ of Dirichlet traces of the discrete
scalars, where an inverse inequality bounds the $H^{1/2}(\G_D)$ norm by the
weighted $L^{2}$ norm.  This space is discrete only in the Dirichlet trace and
contains every discrete pair.

Because the lower bound rests on the stability of the boundary value problem,
the same argument applies to the error.  The Dirichlet trace $\phi-v$ of the
error is not discrete; it is split into $\phi-Q_h\phi$, which depends only on
the datum, and the discrete trace $Q_h\phi-v$, to which the inverse inequality
applies.  This gives the error equivalence: for every pair in this space, the error in the norm induced by the functional is
equivalent to the square root of the functional plus the best approximation
error of $\phi$ in $H^{1/2}(\G_D)$ by $W^{D}_{h}$, a term that depends only on
the Dirichlet datum.  Hence the functional is a
reliable and efficient error estimator for the least-squares solution and
equally for finite element approximations computed by other methods, such as
the Galerkin method with interpolated Dirichlet data; for these, a flux is
recovered by minimizing the functional with the scalar fixed, as in the
non-intrusive estimator of \cite{LiZhang:25}.  In particular, the a posteriori
estimator no longer requires the approximation to satisfy the boundary
conditions, and the Dirichlet term of the functional is the computable
violation of the Dirichlet condition.  For the least-squares solution, the
norm equivalence and a Raviart--Thomas comparison flux that meets the projected
Neumann data exactly give an optimal a priori estimate in
$H(\divvr;\O)\times H^{1}(\O)$.

The second contribution settles the weights: they are sharp, and any larger
weights are admissible.  For the weights $h_F^{-\alpha}$ on $\G_D$ and
$h_F^{-\beta}$ on $\G_N$, the norm equivalence holds with the same constants
for $\alpha\ge1$ and $\beta\ge0$ and fails for $\alpha<1$ or $\beta<0$, already
on quasi-uniform meshes; in particular, it fails for the weight $h_F$ on $\G_N$
suggested by the scaling of $H^{-1/2}_{00}(\G_N)$.  Larger weights leave the a priori estimate unchanged
up to the approximation of $\phi$ in $W^{D}_{h}$: a comparison function meets
the projected boundary data exactly, so every weight multiplies zero.  Thus the unweighted $L^{2}(\G_N)$ penalty, although
stronger than the natural $H^{-1/2}_{00}(\G_N)$ norm, leaves the a priori
estimate optimal.  The mechanism is not specific to finite elements: on $\G_N$
the weight $1$ suffices for every trial class, and in every least-squares
minimization a weight above the stability threshold can be increased without
changing the error bound whenever the trial class contains an accurate
comparison function that meets the boundary data.

The third contribution is the error equivalence for the fully discontinuous
counterpart, with both interior jumps weighted by $h_F^{-1}$: for every
discrete pair, the error in the broken norm is equivalent to the square root of
the functional plus the same Dirichlet data term, also for exact fluxes whose
normal traces on interior faces are not square integrable.  The proof needs no
norm equivalence on broken Sobolev spaces and uses the exact solution as
intermediary: the jumps bound the distance from a discrete pair to its
conforming companions, and the error equivalence of the first contribution
compares the companions with the exact solution.  This averaging argument was
introduced in \cite[Thm.~7.3]{LiangZhang:23}; Bringmann follows it in
\cite[Thm.~12]{Bri:26} for the Poisson problem with homogeneous boundary
conditions.  On the broken spaces without further constraints, $h_F^{-1}$ is
the weakest weight for either jump, and larger weights leave the rate
unchanged, as in the weakly over-penalized interior penalty methods
\cite{BOS:08,BOS:12}.  Bringmann \cite{Bri:26} obtains the weight $h_F$ on the
flux jump only under the extra constraint that the lowest moments of the normal
jumps vanish, which complicates the computation.  Our method is a pure penalty
method with no extra mechanism, and the error equivalence and the optimal a
priori estimate hold without this constraint.

Section~\ref{sec:prelim} collects the notation and the finite element spaces,
and Section~\ref{sec:bvp} the well-posedness of the first-order system.
Section~\ref{sec:functional} proves the norm and error equivalences,
Sections~\ref{sec:apriori} and~\ref{sec:aposteriori} the a priori and a
posteriori estimates for the least-squares solution,
Section~\ref{sec:nonintrusive} the non-intrusive a posteriori estimate for
approximations computed by other methods, which may violate the boundary conditions,
Section~\ref{sec:weights} the sharpness of the weights
and over-penalization, and Section~\ref{sec:dlsfem} the results for the fully
discontinuous method.  Section~\ref{sec:numerics} presents numerical
experiments.

\section{Notation, trace spaces and finite element spaces}
\label{sec:prelim}

\subsection{Domain, coefficients and trace spaces}
Let $\O\subset\mathbb R^{d}$, $d=2,3$, be a bounded, connected Lipschitz
polyhedron with $\p\O=\overline{\G_D}\cup\overline{\G_N}$, where $\G_D$ and
$\G_N$ are disjoint, relatively open, of positive $(d-1)$-dimensional measure,
and, for $d=3$, have Lipschitz relative boundaries.  We assume that
$A\in L^{\infty}(\O;\mathbb R^{d\times d})$ is symmetric with
$\Lambda_0|\by|^2\le\by^{T}A\by\le\Lambda_1|\by|^2$, that
$\bb,\bbeta\in L^{\infty}(\O)^{d}$, and that $\mu\in L^{\infty}(\O)$.

Let $\gamma\subset\p\O$ be relatively open with positive measure, let
$\tr^{g}:H^{1}(\O)\to H^{1/2}(\p\O)$ be the trace operator,
$\tr^{g}_{\gamma}(v):=\tr^{g}(v)|_{\gamma}$, and let $H^{1}_{D}(\O)$ and
$H^{1}_{\p\O\setminus\gamma}(\O)$ be the subspaces of $H^{1}(\O)$ with
vanishing trace on $\G_D$ and on $\p\O\setminus\gamma$.  We use the quotient
norms
\beq\label{def_quotient}
\|w\|_{1/2,\p\O}:=\inf_{\tr^{g}(v)=w}\|v\|_{1,\O},\qquad
\|w\|_{1/2,\gamma}:=\inf_{g|_{\gamma}=w}\|g\|_{1/2,\p\O}
\eeq
on $H^{1/2}(\p\O)$ and $H^{1/2}(\gamma):=\{g|_{\gamma}:\ g\in H^{1/2}(\p\O)\}$, so that
\begin{align}
\label{eq:quotient_inf}
\|w\|_{1/2,\gamma}&=\inf\{\|v\|_{1,\O}:\ v\in H^{1}(\O),\ \tr^{g}_{\gamma}(v)=w\},\\
\label{eq:quotient_lift}
\|\tr^{g}_{\gamma}(v)\|_{1/2,\gamma}&\le\|v\|_{1,\O}\qquad\forall v\in H^{1}(\O).
\end{align}
The
Lions--Magenes space $\H00(\gamma)$ consists of the traces on $\gamma$ of the
functions in $H^{1}_{\p\O\setminus\gamma}(\O)$ \cite[p.~121]{sayas:2019book},
with the norm
\beq\label{def_norm00}
\|v\|_{\H00(\gamma)}:=\inf\{\|w\|_{1,\O}:\ w\in H^{1}_{\p\O\setminus\gamma}(\O),\
\tr^{g}_{\gamma}(w)=v\},
\eeq
and $\Hm00(\gamma)$ is its dual, so that
\beq\label{bd_on_12}
\langle q,v\rangle_{\Hm00(\gamma)\times\H00(\gamma)}
\le\|q\|_{\Hm00(\gamma)}\|w\|_{1,\O}
\eeq
for every $w\in H^{1}_{\p\O\setminus\gamma}(\O)$ with $\tr^{g}_{\gamma}(w)=v$.
For $\btau\in H(\divvr;\O)$ the normal trace $\tr^{d}_{\gamma}(\btau\cdot\bn)\in\Hm00(\gamma)$ is defined by
\beq\label{inparts}
\langle\tr^{d}_{\gamma}(\btau\cdot\bn),\tr^{g}_{\gamma}(v)\rangle_{\Hm00(\gamma)\times\H00(\gamma)}
:=(\btau,\nabla v)_{0,\O}+(\gradt\btau,v)_{0,\O}
\qquad\forall v\in H^{1}_{\p\O\setminus\gamma}(\O),
\eeq
whose right-hand side depends only on $\tr^{g}_{\gamma}(v)$ because it vanishes
on $H^{1}_{0}(\O)$.  Taking the infimum in \eqref{def_norm00} gives
\beq\label{eq:hdiv_trace}
\|\tr^{d}_{\gamma}(\btau\cdot\bn)\|_{\Hm00(\gamma)}\le\|\btau\|_{H(\divvr;\O)}.
\eeq
The quotient norms are equivalent to the intrinsic Sobolev--Slobodeckij norms
on $\p\O$, $\G_D$ and $\G_N$ \cite{Gri:11,LM:72}; we use them because they
give the constant one in \eqref{eq:quotient_lift}, \eqref{bd_on_12} and
\eqref{eq:hdiv_trace}.  When a trace is square integrable on a face $F$, we
write $\|v\|_{0,F}$ and $\|\btau\cdot\bn\|_{0,F}$ for the $L^{2}(F)$ norms of
$\tr^{g}_{F}(v)$ and $\tr^{d}_{F}(\btau\cdot\bn)$.

\subsection{Meshes and finite element spaces}
Let $\cT_h$ be a conforming, shape-regular, affine simplicial mesh of $\O$ such
that the relative interior of every boundary face lies in $\G_D$ or in $\G_N$,
and let $\cE_D$ and $\cE_N$ be the corresponding sets of boundary faces.  For a
boundary face $F$, $K_F$ is the element containing $F$.  We write $h_K$ and
$h_F$ for the diameters of $K$ and $F$ and $h_{\max}:=\max_{K\in\cT_h}h_K$;
elements and faces sharing a point have comparable diameters.  We normalize
$\operatorname{diam}\O\le1$, so $h_F\le h_{\max}\le1$.  For non-integer $s$,
$|\cdot|_{s,D}$ is the Sobolev--Slobodeckij seminorm on a domain or surface
$D$.  We write $X\lesssim Y$ if $X\le CY$ with $C$ independent of the mesh
size, and $X\simeq Y$ if $X\lesssim Y\lesssim X$.

Let $P_m(K)$ and $P_m(F)$ be the polynomials of degree at most $m$ on $K$ and
on a face $F$, and, for $k\ge0$, $\bRT_k(K):=P_k(K)^{d}+\bx\,P_k(K)$, whose
normal components on the faces of $K$ lie in $P_k(F)$.  The flux space
$\Sigma_h\subset H(\divvr;\O)$ and the scalar space $V_h\subset H^{1}(\O)$
consist of the functions whose restrictions to every $K\in\cT_h$ lie in
$\bRT_k(K)$ and in $P_{k+1}(K)$, respectively, and
$W^{D}_{h}:=\tr^{g}_{\G_D}(V_h)\subset H^{1/2}(\G_D)$ is the discrete Dirichlet
trace space.

\section{The first-order system: well-posedness and stability}
\label{sec:bvp}

Consider the second-order elliptic equation with mixed boundary conditions
\beq\label{pde}
\begin{gathered}
-\gradt(A\nabla u+\bb u)+\bbeta\cdot\nabla u+\mu u=f_1+\gradt\bff_2\quad\text{in }\O,\\
u=\phi\quad\text{on }\G_D,\qquad
-(A\nabla u+\bb u+\bff_2)\cdot\bn=\psi\quad\text{on }\G_N,
\end{gathered}
\eeq
with $f_1\in L^{2}(\O)$ and $\bff_2\in L^{2}(\O)^{d}$, so that the right-hand
side $f_1+\gradt\bff_2$ is in general only in $H^{-1}(\O)$.  With the flux
$\bsigma:=-(A\nabla u+\bb u+\bff_2)$, problem \eqref{pde} becomes the
first-order system
\beq\label{fos}
\begin{gathered}
\gradt\bsigma+\bbeta\cdot\nabla u+\mu u=f_1,\qquad
\bsigma+A\nabla u+\bb u=-\bff_2\quad\text{in }\O,\\
u=\phi\quad\text{on }\G_D,\qquad
\bsigma\cdot\bn=\psi\quad\text{on }\G_N.
\end{gathered}
\eeq
The Dirichlet condition is understood as $\tr^{g}_{\G_D}(u)=\phi$.  For
$u\in H^{1}(\O)$ and $\bff_2\in L^{2}(\O)^{d}$ we have
$\bsigma=-(A\nabla u+\bb u+\bff_2)\in L^{2}(\O)^{d}$ and
$\gradt\bsigma=f_1-\bbeta\cdot\nabla u-\mu u\in L^{2}(\O)$, so that
$\bsigma\in H(\divvr;\O)$, and the Neumann condition is understood as
$\tr^{d}_{\G_N}(\bsigma\cdot\bn)=\psi$ in $\Hm00(\G_N)$, through \eqref{inparts}
with $\gamma=\G_N$.  Since $\p\O\setminus\G_N=\overline{\G_D}$ up to a set of
measure zero, the test space in \eqref{inparts} is $H^{1}_{D}(\O)$.

Testing the first equation of \eqref{fos} with $v_0\in H^{1}_{D}(\O)$, using
\eqref{inparts} and $-\bsigma=A\nabla u+\bb u+\bff_2$, yields the weak
problem: find $u\in H^{1}(\O)$ with $\tr^{g}_{\G_D}(u)=\phi$ such that
\beq\label{weak}
a(u,v_0)=\ell(v_0)\qquad\forall v_0\in H^{1}_{D}(\O),
\eeq
where the bilinear form is $a(w,v):=(A\nabla w+\bb w,\nabla v)_{0,\O}
+(\bbeta\cdot\nabla w+\mu w,v)_{0,\O}$ and the linear form is
$\ell(v_0):=(f_1,v_0)_{0,\O}-(\bff_2,\nabla v_0)_{0,\O}
-\langle\psi,\tr^{g}_{\G_N}(v_0)\rangle_{\Hm00(\G_N)\times\H00(\G_N)}$.
By the Cauchy--Schwarz inequality and \eqref{bd_on_12},
\beq\label{continuity_a}
|a(w,v)|\le C_{con}\|w\|_{1,\O}\|v\|_{1,\O}\quad\forall w,v\in H^{1}(\O),
\qquad
|\ell(v_0)|\le\big(\|f_1\|_{0,\O}+\|\bff_2\|_{0,\O}
+\|\psi\|_{\Hm00(\G_N)}\big)\|v_0\|_{1,\O},
\eeq
with $C_{con}$ depending only on $\Lambda_1$, $\|\bb\|_{L^{\infty}(\O)}$,
$\|\bbeta\|_{L^{\infty}(\O)}$ and $\|\mu\|_{L^{\infty}(\O)}$.

\begin{assum}[uniqueness]\label{assum:infsup}
If $w_0\in H^{1}_{D}(\O)$ satisfies $a(w_0,v_0)=0$ for all
$v_0\in H^{1}_{D}(\O)$, then $w_0=0$.
\end{assum}

The terms of $a$ involving $\bb$, $\bbeta$ and $\mu$ are compact perturbations
of the principal part $(A\nabla w,\nabla v)_{0,\O}$, which is coercive on
$H^{1}_{D}(\O)$ because $\G_D$ has positive measure.  The operator
$H^{1}_{D}(\O)\to(H^{1}_{D}(\O))'$ induced by $a$ is therefore Fredholm of
index zero, and Assumption~\ref{assum:infsup} makes it injective, hence
bijective.  Consequently there is a constant $\beta_a>0$ such that
\beq\label{infsup}
\inf_{0\neq w_0\in H^{1}_{D}(\O)}\ \sup_{0\neq v_0\in H^{1}_{D}(\O)}
\frac{a(w_0,v_0)}{\|w_0\|_{1,\O}\|v_0\|_{1,\O}}\ \geq\ \beta_a ,
\eeq
and $a(w_0,v_0)=0$ for all $w_0\in H^{1}_{D}(\O)$ implies $v_0=0$.  These are
the two hypotheses of the Ne\v{c}as--Babu\v{s}ka theorem; both follow from the
single uniqueness assumption.  This minimal assumption, and its role in the
coercivity of first-order system least-squares methods for general
second-order elliptic equations, are discussed in detail
in \cite{Zhang:23}.

\begin{thm}[well-posedness and stability]\label{thm:stability}
Let the uniqueness assumption, Assumption~\ref{assum:infsup}, hold.  For every $f_1\in L^{2}(\O)$,
$\bff_2\in L^{2}(\O)^{d}$, $\phi\in H^{1/2}(\G_D)$ and $\psi\in\Hm00(\G_N)$,
problem \eqref{weak} has a unique solution $u$.  With
$\bsigma:=-(A\nabla u+\bb u+\bff_2)$, the pair $(\bsigma,u)$ is the unique
solution of \eqref{fos} in $H(\divvr;\O)\times H^{1}(\O)$, and
\beq\label{eq:stability}
\|u\|_{1,\O}+\|\bsigma\|_{H(\divvr;\O)}\ \le\
C_{S}\big(\|f_1\|_{0,\O}+\|\bff_2\|_{0,\O}+\|\psi\|_{\Hm00(\G_N)}
+\|\phi\|_{1/2,\G_D}\big).
\eeq
\end{thm}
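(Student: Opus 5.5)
The plan is the standard lifting argument followed by the Ne\v{c}as--Babu\v{s}ka theorem, with the inf--sup constant \eqref{infsup} and the continuity bounds \eqref{continuity_a} supplying the constants.

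\emph{Lifting the Dirichlet datum.} Given $\phi\in H^{1/2}(\G_D)$, the quotient characterization \eqref{eq:quotient_inf} provides $u_\phi\in H^{1}(\O)$ with $\tr^{g}_{\G_D}(u_\phi)=\phi$ and $\|u_\phi\|_{1,\O}\le 2\|\phi\|_{1/2,\G_D}$; the factor $2$ is only there because the infimum need not be attained. Writing $u=u_\phi+w_0$ with $w_0\in H^{1}_{D}(\O)$, problem \eqref{weak} becomes: find $w_0\in H^{1}_{D}(\O)$ with
\[
a(w_0,v_0)=\ell(v_0)-a(u_\phi,v_0)\qquad\forall v_0\in H^{1}_{D}(\O).
\]
By \eqref{continuity_a}, the right-hand side is a bounded functional on $H^{1}_{D}(\O)$, with norm at most $\|f_1\|_{0,\O}+\|\bff_2\|_{0,\O}+\|\psi\|_{\Hm00(\G_N)}+2C_{con}\|\phi\|_{1/2,\G_D}$.

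\emph{Existence, uniqueness and the bound on $u$.} The two Ne\v{c}as--Babu\v{s}ka hypotheses were established just before the theorem from Assumption~\ref{assum:infsup}: the inf--sup condition \eqref{infsup} and the nondegeneracy in the second argument. They give a unique $w_0$ with $\|w_0\|_{1,\O}\le\beta_a^{-1}$ times the functional norm above. Uniqueness of $u$ follows because the difference of two solutions lies in $H^{1}_{D}(\O)$ and solves the homogeneous problem, so it vanishes by the assumption. This yields $\|u\|_{1,\O}\lesssim$ the data.

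\emph{The flux and the system \eqref{fos}.} Set $\bsigma:=-(A\nabla u+\bb u+\bff_2)$. Then
\[
\|\bsigma\|_{0,\O}\le\Lambda_1\|\nabla u\|_{0,\O}+\|\bb\|_{L^\infty(\O)}\|u\|_{0,\O}+\|\bff_2\|_{0,\O}.
\]
Testing \eqref{weak} with $v_0\in C_0^\infty(\O)$ gives $\gradt\bsigma=f_1-\bbeta\cdot\nabla u-\mu u$ in the distributional sense, so $\gradt\bsigma\in L^{2}(\O)$ and its norm is bounded by the data. This establishes \eqref{eq:stability}.

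For the Neumann condition, insert the divergence identity into \eqref{inparts} with $\gamma=\G_N$ and compare with \eqref{weak}. For all $v_0\in H^{1}_{D}(\O)$ this gives $\langle\tr^{d}_{\G_N}(\bsigma\cdot\bn),\tr^{g}_{\G_N}v_0\rangle=\langle\psi,\tr^{g}_{\G_N}v_0\rangle$. Since traces of $H^{1}_{D}(\O)$ exhaust $\H00(\G_N)$, we get $\bsigma\cdot\bn=\psi$ in $\Hm00(\G_N)$.

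\emph{Uniqueness in $H(\divvr;\O)\times H^{1}(\O)$.} Any solution of \eqref{fos} in this space yields, by reversing the integration by parts, a solution $u$ of \eqref{weak}. By uniqueness for \eqref{weak}, $u$ is determined, and then $\bsigma$ is determined by the constitutive equation.

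\emph{Main obstacle.} The only delicate point is the identification of the Neumann trace. One must check that the test space in \eqref{inparts} for $\gamma=\G_N$ is exactly $H^{1}_{D}(\O)$, which was noted in the text since $\p\O\setminus\G_N=\overline{\G_D}$ up to a null set. Given that, the pairing manipulation is direct.
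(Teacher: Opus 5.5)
Your proposal is correct and follows the paper's proof: you lift $\phi$ via \eqref{eq:quotient_inf}, obtain the $H^{1}_{D}(\O)$ correction from the Ne\v{c}as--Babu\v{s}ka theorem with \eqref{continuity_a} and \eqref{infsup}, recover $\gradt\bsigma$ by testing with $C_c^\infty(\O)$, and identify the Neumann trace through \eqref{inparts} with $\gamma=\G_N$. The differences are cosmetic: you take a near-minimizing lift with factor $2$ where the paper takes the infimum over all lifts, and you prove uniqueness of the pair in $H(\divvr;\O)\times H^{1}(\O)$ explicitly, which the paper leaves implicit.
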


\begin{proof}
Let $u_\phi\in H^{1}(\O)$ with $\tr^{g}_{\G_D}(u_\phi)=\phi$.  By
\eqref{continuity_a}, \eqref{infsup} and the Ne\v{c}as--Babu\v{s}ka theorem
there is a unique $u_0\in H^{1}_{D}(\O)$ with
$a(u_0,v_0)=\ell(v_0)-a(u_\phi,v_0)$ for all $v_0\in H^{1}_{D}(\O)$, and
$\beta_a\|u_0\|_{1,\O}\le\|f_1\|_{0,\O}+\|\bff_2\|_{0,\O}
+\|\psi\|_{\Hm00(\G_N)}+C_{con}\|u_\phi\|_{1,\O}$.  Thus $u:=u_0+u_\phi$ is the
unique solution of \eqref{weak}, and the infimum over $u_\phi$ and
\eqref{eq:quotient_inf} bound $\|u\|_{1,\O}$ as in \eqref{eq:stability}.
Testing \eqref{weak} with $v_0\in C_c^\infty(\O)$ gives
$\gradt\bsigma=f_1-\bbeta\cdot\nabla u-\mu u$, and \eqref{inparts} with
$\gamma=\G_N$ gives $\tr^{d}_{\G_N}(\bsigma\cdot\bn)=\psi$, so $(\bsigma,u)$
solves \eqref{fos}; the bounds for $\|\bsigma\|_{0,\O}$ and
$\|\gradt\bsigma\|_{0,\O}$ follow from the two equations in $\O$.
\end{proof}

\begin{cor}[residual representation]\label{lem:residual_rep}
Let the uniqueness assumption, Assumption~\ref{assum:infsup}, hold, let
$(\btau,v)\in H(\divvr;\O)\times H^{1}(\O)$, and set
$$
R_1:=A^{-\half}(\btau+A\nabla v+\bb v),\quad
R_2:=\gradt\btau+\bbeta\cdot\nabla v+\mu v.
$$
Then $(\btau,v)$ is the unique solution of \eqref{fos} with data $f_1=R_2$,
$\bff_2=-A^{\half}R_1$, $\phi=\tr^{g}_{\G_D}(v)$ and $\psi=\tr^{d}_{\G_N}(\btau\cdot\bn)$, and
\beq\label{eq:residual_stability}
\|v\|_{1,\O}+\|\btau\|_{H(\divvr;\O)}
\ \le\ C_{S}\Big(\big(1+\Lambda_1^{\half}\big)\big(\|R_1\|_{0,\O}+\|R_2\|_{0,\O}\big)
+\|\tr^{g}_{\G_D}(v)\|_{1/2,\G_D}+\|\tr^{d}_{\G_N}(\btau\cdot\bn)\|_{\Hm00(\G_N)}\Big).
\eeq
\end{cor}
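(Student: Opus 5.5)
The plan is to verify directly that $(\btau,v)$ satisfies the first-order system \eqref{fos} with the stated data, and then to invoke Theorem~\ref{thm:stability}. Uniqueness and the stability estimate follow from that theorem once the data are shown to lie in the admissible classes.

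First I check the data classes. Since $A$ is symmetric with eigenvalues in $[\Lambda_0,\Lambda_1]$, the matrices $A^{\pm\half}$ are bounded, and $\|A^{\half}\by\|\le\Lambda_1^{\half}|\by|$ pointwise. Because $\btau\in L^{2}(\O)^{d}$ and $v\in H^{1}(\O)$, we get $R_1\in L^{2}(\O)^{d}$. Hence $\bff_2:=-A^{\half}R_1\in L^{2}(\O)^{d}$ with $\|\bff_2\|_{0,\O}\le\Lambda_1^{\half}\|R_1\|_{0,\O}$. Likewise $R_2\in L^{2}(\O)$, since $\gradt\btau\in L^{2}(\O)$ and $\bbeta,\mu\in L^{\infty}(\O)$. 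The trace $\phi=\tr^{g}_{\G_D}(v)$ lies in $H^{1/2}(\G_D)$ by definition of that space. The normal trace $\psi=\tr^{d}_{\G_N}(\btau\cdot\bn)$ lies in $\Hm00(\G_N)$ by \eqref{inparts} and \eqref{eq:hdiv_trace}.

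Next I check the equations of \eqref{fos}. The relation $A^{\half}R_1=\btau+A\nabla v+\bb v$ gives $\btau+A\nabla v+\bb v=-\bff_2$. The definition of $R_2$ gives $\gradt\btau+\bbeta\cdot\nabla v+\mu v=f_1$. The boundary conditions hold trivially in the sense fixed after \eqref{fos}. It remains to match the notion of solution in Theorem~\ref{thm:stability}, where $u$ solves \eqref{weak} and $\bsigma$ is defined from $u$. For this I test the first equation with $v_0\in H^{1}_{D}(\O)$ and apply \eqref{inparts} with $\gamma=\G_N$, substituting $\btau=-(A\nabla v+\bb v+\bff_2)$. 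This yields $a(v,v_0)=\ell(v_0)$, so $v$ solves \eqref{weak}. By the uniqueness in Theorem~\ref{thm:stability}, $v=u$ and then $\btau=\bsigma$. This identification is the only step needing care, and it simply reverses the derivation of \eqref{weak}.

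Finally, applying \eqref{eq:stability} gives
$$
\|v\|_{1,\O}+\|\btau\|_{H(\divvr;\O)}\le C_S\big(\|R_2\|_{0,\O}+\Lambda_1^{\half}\|R_1\|_{0,\O}+\|\tr^{d}_{\G_N}(\btau\cdot\bn)\|_{\Hm00(\G_N)}+\|\tr^{g}_{\G_D}(v)\|_{1/2,\G_D}\big).
$$
The first two terms are bounded by $(1+\Lambda_1^{\half})(\|R_1\|_{0,\O}+\|R_2\|_{0,\O})$, which is \eqref{eq:residual_stability}. I expect no real obstacle; the argument is bookkeeping on top of Theorem~\ref{thm:stability}.
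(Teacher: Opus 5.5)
Your proposal is correct and follows the paper's proof: you check admissibility of the data through the trace bounds, substitute into \eqref{fos}, and apply Theorem~\ref{thm:stability} with $\|\bff_2\|_{0,\O}\le\Lambda_1^{\half}\|R_1\|_{0,\O}$. Your explicit identification of $v$ as the solution of \eqref{weak} only spells out the uniqueness for \eqref{fos} that the paper takes directly from Theorem~\ref{thm:stability}.
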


\begin{proof}
The data are admissible: $\tr^{g}_{\G_D}(v)\in H^{1/2}(\G_D)$ by
\eqref{eq:quotient_lift} and $\tr^{d}_{\G_N}(\btau\cdot\bn)\in\Hm00(\G_N)$ by
\eqref{eq:hdiv_trace}.  Substitution shows that
$(\btau,v)$ solves \eqref{fos} with these data, and
Theorem~\ref{thm:stability} gives \eqref{eq:residual_stability}.
\end{proof}

\section{The least-squares functional with weakly imposed boundary conditions: norm equivalence and error equivalence}
\label{sec:functional}

\begin{assum}[regular boundary data]\label{assum:data}
The Dirichlet datum satisfies $\phi\in H^{1/2}(\G_D)$, and the Neumann datum
satisfies $\psi\in L^{2}(\G_N)$.
\end{assum}

The Dirichlet assumption is the natural one: $H^{1/2}(\G_D)$ is the trace space
of $H^{1}(\O)$ on $\G_D$.  The Neumann assumption is stronger than the
$\psi\in\Hm00(\G_N)$ allowed in Theorem~\ref{thm:stability}, because the
Neumann residual will be penalized in $L^{2}(\G_N)$ and the penalty must be
finite.

Two ambient spaces are used.  The flux space
\beq\label{def:Sstar}
\Sigma:=\{\btau\in H(\divvr;\O):\ \tr^{d}_{\G_N}(\btau\cdot\bn)\in L^{2}(\G_N)\}
\eeq
is the set on which the Neumann penalty is finite, and $\Sigma_h\subset\Sigma$.
The scalar space
\beq\label{def:Vh}
V^{D}_{\ast}:=\{v\in H^{1}(\O):\ \tr^{g}_{\G_D}(v)\in W^{D}_{h}\}
\eeq
restricts only the Dirichlet trace, so that an inverse inequality on discrete
traces bounds the $H^{1/2}(\G_D)$ norm by the Dirichlet penalty.  Equivalently,
$V^{D}_{\ast}=V_h+H^{1}_{D}(\O)$: if $\tr^{g}_{\G_D}(v)=\tr^{g}_{\G_D}(w_h)$
with $w_h\in V_h$, then $v-w_h\in H^{1}_{D}(\O)$.  In particular,
$\Sigma_h\times V_h\subset\Sigma\times V^{D}_{\ast}$.

For $(\btau,v)\in\Sigma\times H^{1}(\O)$ define
\begin{align}
\label{def:J}
J(\btau,v;f_1,\bff_2,\phi,\psi)
&:=\big\|A^{-\half}(\btau+A\nabla v+\bb v+\bff_2)\big\|_{0,\O}^{2}
+\big\|\gradt\btau+\bbeta\cdot\nabla v+\mu v-f_1\big\|_{0,\O}^{2}\\
\nonumber
&\qquad
+\sum_{F\in\cE_D}h_F^{-1}\|v-\phi\|_{0,F}^{2}
+\|\btau\cdot\bn-\psi\|_{0,\G_N}^{2},
\end{align}
and write $J(\btau,v;0):=J(\btau,v;0,\bzero,0,0)$.  The associated
mesh-dependent norm is
\beq\label{def:norm}
\tri(\btau,v)\tri^{2}
:=\|v\|_{1,\O}^{2}+\|\btau\|_{H(\divvr;\O)}^{2}
+\sum_{F\in\cE_D}h_F^{-1}\|v\|_{0,F}^{2}
+\|\btau\cdot\bn\|_{0,\G_N}^{2}.
\eeq

The right-hand side $f_1+\gradt\bff_2$ of \eqref{pde} is a general element of
$H^{-1}(\O)$, written out explicitly: every element of $H^{-1}(\O)$ has this
form with $f_1\in L^{2}(\O)$ and $\bff_2\in L^{2}(\O)^{d}$.  Such data are
usually treated as abstract elements of a dual space and measured in negative
norms, for instance through the discrete $H^{-1}$ inner product of
\cite{BLP:97}.  With the explicit form, the functional \eqref{def:J} consists
of $L^{2}$ norms only: $\bff_2$ is absorbed into the flux
$\bsigma=-(A\nabla u+\bb u+\bff_2)$, an exact change of variables, so that
$\gradt\bsigma\in L^{2}(\O)$ and the divergence residual is measured in
$L^{2}(\O)$.  The negative norm belongs to the data, not to the residual.  This
treatment of $H^{-1}$ data was introduced in \cite{Zhang:23}, and
\cite[Sect.~2]{WuZhang:26} makes it concrete for linear functionals on the
boundary, with quantities of interest as examples.

\begin{lem}[continuity]\label{lem:continuity}
There is a constant $C_2$ depending only on $\Lambda_0$, $\Lambda_1$,
$\|\bb\|_{L^{\infty}(\O)}$, $\|\bbeta\|_{L^{\infty}(\O)}$ and
$\|\mu\|_{L^{\infty}(\O)}$ with $J(\btau,v;0)\le C_2\,\tri(\btau,v)\tri^{2}$ for all $(\btau,v)\in\Sigma\times H^{1}(\O)$.
\end{lem}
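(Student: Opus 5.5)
The continuity bound is a term-by-term estimate of $J(\btau,v;0)$ against the four pieces of $\tri(\btau,v)\tri^{2}$. With zero data, $J(\btau,v;0)$ is the sum of four terms: the constitutive residual $\|A^{-\half}(\btau+A\nabla v+\bb v)\|_{0,\O}^{2}$, the divergence residual $\|\gradt\btau+\bbeta\cdot\nabla v+\mu v\|_{0,\O}^{2}$, the Dirichlet penalty $\sum_{F\in\cE_D}h_F^{-1}\|v\|_{0,F}^{2}$, and the Neumann penalty $\|\btau\cdot\bn\|_{0,\G_N}^{2}$. The last two appear verbatim in \eqref{def:norm}, so they are bounded with constant one. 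Both penalties are finite for $(\btau,v)\in\Sigma\times H^{1}(\O)$. The Neumann term is finite by the definition \eqref{def:Sstar} of $\Sigma$. The Dirichlet term is finite because the trace of an $H^{1}$ function is square integrable on each face.

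For the two interior residuals, I would use only the ellipticity bounds on $A$ and the $L^\infty$ bounds on the coefficients. The lower bound $\by^TA\by\ge\Lambda_0|\by|^2$ gives $\|A^{-\half}\bx\|_{0,\O}\le\Lambda_0^{-\half}\|\bx\|_{0,\O}$. Hence, by the triangle inequality and $(a+b+c)^2\le3(a^2+b^2+c^2)$,
$$\|A^{-\half}(\btau+A\nabla v+\bb v)\|_{0,\O}^{2}\le3\Lambda_0^{-1}\big(\|\btau\|_{0,\O}^{2}+\Lambda_1^{2}\|\nabla v\|_{0,\O}^{2}+\|\bb\|_{L^\infty(\O)}^{2}\|v\|_{0,\O}^{2}\big).$$
A sharper alternative is to write $A^{-\half}A\nabla v=A^{\half}\nabla v$ and use $\|A^{\half}\|\le\Lambda_1^{\half}$; either version is enough. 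In the same way,
$$\|\gradt\btau+\bbeta\cdot\nabla v+\mu v\|_{0,\O}^{2}\le3\big(\|\gradt\btau\|_{0,\O}^{2}+\|\bbeta\|_{L^\infty(\O)}^{2}\|\nabla v\|_{0,\O}^{2}+\|\mu\|_{L^\infty(\O)}^{2}\|v\|_{0,\O}^{2}\big).$$

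Adding the four estimates and bounding $\|\btau\|_{0,\O}^{2}+\|\gradt\btau\|_{0,\O}^{2}$ by $\|\btau\|_{H(\divvr;\O)}^{2}$ and $\|v\|_{0,\O}^{2}+\|\nabla v\|_{0,\O}^{2}$ by $\|v\|_{1,\O}^{2}$ gives the claim. One may take
$$C_2=\max\big\{1,\;3\Lambda_0^{-1}\max\{1,\Lambda_1^{2},\|\bb\|_{L^\infty(\O)}^{2}\}+3\max\{1,\|\bbeta\|_{L^\infty(\O)}^{2},\|\mu\|_{L^\infty(\O)}^{2}\}\big\},$$
which depends only on the listed quantities. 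The constant is independent of the mesh because the mesh-dependent weights $h_F^{-1}$ occur identically in $J$ and in $\tri\cdot\tri$.

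No step is a genuine obstacle. The only point needing care is to state the scaling by $A^{-\half}$ correctly, using the spectral bounds of the symmetric matrix $A$ pointwise almost everywhere.
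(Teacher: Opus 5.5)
Your proposal is correct and follows the same route as the paper's proof: the two volume residuals are bounded by the triangle inequality and the bounds on the coefficients, and the boundary terms of $J(\cdot;0)$ and $\tri\cdot\tri$ coincide. Your explicit constant $C_2$ is a valid instance of the constant the paper leaves implicit.
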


\begin{proof}
The two volume terms are bounded by the triangle inequality and the bounds on
the coefficients, and the boundary terms of $J(\cdot;0)$ and $\tri\cdot\tri$
coincide.
\end{proof}

The two boundary inequalities used below are of different nature.  On $\G_N$ the
relevant inequality is the embedding $L^{2}(\G_N)\hookrightarrow\Hm00(\G_N)$ of the Gelfand triple,
\beq\label{eq:embedN}
\|q\|_{\Hm00(\G_N)}\le C\|q\|_{0,\G_N}\qquad\forall q\in L^{2}(\G_N).
\eeq
On $\G_D$ it is an inverse inequality on the discrete trace space.

\begin{lem}[discrete Dirichlet trace inequality]\label{lem:inverseD}
There is $C$, depending only on shape regularity, $k$ and $d$, such that
\beq\label{eq:inverseD}
\|g\|_{1/2,\G_D}\le C\Big(\sum_{F\in\cE_D}h_F^{-1}\|g\|_{0,F}^{2}\Big)^{1/2}
\qquad\forall g\in W^{D}_{h}.
\eeq
\end{lem}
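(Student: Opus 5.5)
The plan is to bound the quotient norm \eqref{def_quotient} directly by an explicit $H^{1}(\O)$ extension of $g$. Given $g\in W^{D}_{h}$, I build $w_h\in V_h$ with $\tr^{g}_{\G_D}(w_h)=g$. I set the degrees of freedom of the Lagrange basis of $V_h$ (degree $k+1$) equal to those of $g$ at the nodes lying on $\overline{\G_D}$, and zero at all other nodes. Then \eqref{eq:quotient_inf} gives $\|g\|_{1/2,\G_D}\le\|w_h\|_{1,\O}$, and it suffices to prove
$$
\|w_h\|_{1,\O}^{2}\lesssim\sum_{F\in\cE_D}h_F^{-1}\|g\|_{0,F}^{2}.
$$
This extension is not the minimal one, but it is local, and that is all we need.

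First I check that the trace is right. On a Dirichlet face $F$, all nodes of $F$ lie on $\overline{\G_D}$. Since $g$ is the trace of some $v_h\in V_h$, the nodal values there are those of $g|_F\in P_{k+1}(F)$, so $w_h|_F=g|_F$.

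Next come local estimates. The function $w_h$ is supported on the elements $K$ having at least one node on $\overline{\G_D}$. On such a $K$, scaling to the reference element and using equivalence of norms on $P_{k+1}$ gives
$$
\|w_h\|_{0,K}^{2}+h_K^{2}\|\nabla w_h\|_{0,K}^{2}\lesssim h_K^{d}\sum_{z\in K\cap\overline{\G_D}}|g(z)|^{2}.
$$
Since $h_K\le1$, this yields
$$
\|w_h\|_{1,K}^{2}\lesssim h_K^{d-2}\sum_{z}|g(z)|^{2}.
$$
Each node $z\in\overline{\G_D}$ belongs to some Dirichlet face $F\ni z$. On that face, the face inverse estimate $|g(z)|^{2}\le\|g\|_{L^\infty(F)}^{2}\lesssim h_F^{1-d}\|g\|_{0,F}^{2}$ holds by scaling on $P_{k+1}(F)$. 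Since elements and faces sharing a point have comparable diameters, $h_K^{d-2}h_F^{1-d}\simeq h_F^{-1}$.

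Finally I sum over $K$. Shape regularity bounds the number of elements touching a given node, and the number of faces attached to a given node, so each $\|g\|_{0,F}^{2}$ is counted a bounded number of times. This gives the claim with $C$ depending only on shape regularity, $k$ and $d$.

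The main obstacle is the nodes on the relative boundary $\p\G_D$, and elements that touch $\overline{\G_D}$ only at a vertex or edge. At such a node, $g(z)$ must still be controlled by some Dirichlet face containing $z$, and the value must be consistent with $g$. Because $g$ is continuous, being the trace of a continuous $v_h$, and the face $F\subset\overline{\G_D}$ containing $z$ exists by mesh compatibility, the single-face estimate above handles this. I would double-check this step and the comparability of $h_K$ with $h_F$ for elements not containing $F$.
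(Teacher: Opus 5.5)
Your proposal is correct and follows essentially the same route as the paper: the same nodal lifting (values of $g$ at the Lagrange nodes on $\overline{\G_D}$, zero elsewhere), element and face scaling, $h_K\le1$ to absorb the $L^{2}$ part, finite overlap, and the quotient-norm bound $\|g\|_{1/2,\G_D}\le\|w_h\|_{1,\O}$; you cite \eqref{eq:quotient_inf} where the paper cites \eqref{eq:quotient_lift}, which gives the same bound here. The step you wanted to double-check is covered by the paper's standing mesh assumption that elements and faces sharing a point have comparable diameters, so $h_K\simeq h_F$ also holds for elements that touch $\overline{\G_D}$ only at a vertex or an edge.
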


\begin{proof}
Let $L_hg\in V_h$ take the Lagrange nodal values of $g$ at the nodes on
$\overline{\G_D}$ and the value zero at all other nodes.  Then
$\tr^{g}_{\G_D}(L_hg)=g$.  For a node $z$ on
a face $F\in\cE_D$, $|g(z)|^{2}\le Ch_F^{1-d}\|g\|^{2}_{0,F}$, and on every
element $K$, $\|L_hg\|^{2}_{0,K}+h_K^{2}\|\nabla L_hg\|^{2}_{0,K}\le
Ch_K^{d}\sum_z|g(z)|^{2}$, the sum running over the nodes of $K$ on
$\overline{\G_D}$.  Elements and faces sharing a node have comparable diameters
and the node patches overlap finitely, so
$\|L_hg\|^{2}_{1,\O}\le C\sum_{F\in\cE_D}(h_F+h_F^{-1})\|g\|^{2}_{0,F}$, and
since $h_F\le1$ the right-hand side is at most
$2C\sum_{F\in\cE_D}h_F^{-1}\|g\|^{2}_{0,F}$.  Finally,
$\|g\|_{1/2,\G_D}\le\|L_hg\|_{1,\O}$ by \eqref{eq:quotient_lift}.
\end{proof}

\begin{thm}[norm equivalence]\label{thm:threshold}
Let the uniqueness assumption, Assumption~\ref{assum:infsup}, hold.
Then
\beq\label{eq:normequiv}
C_1\tri(\btau,v)\tri^{2}\ \le\ J(\btau,v;0)\ \le\ C_2\tri(\btau,v)\tri^{2}
\qquad\forall(\btau,v)\in\Sigma\times V^{D}_{\ast},
\eeq
with $C_1,C_2>0$ independent of the mesh size.
\end{thm}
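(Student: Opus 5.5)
The upper bound is Lemma~\ref{lem:continuity}, so only the lower bound needs an argument. For it, fix $(\btau,v)\in\Sigma\times V^{D}_{\ast}$ and form the residuals $R_1$ and $R_2$ of Corollary~\ref{lem:residual_rep}. Since $J(\btau,v;0)$ has zero data, its two volume terms are exactly $\|R_1\|_{0,\O}^{2}$ and $\|R_2\|_{0,\O}^{2}$. Its two boundary terms are $\sum_{F\in\cE_D}h_F^{-1}\|v\|_{0,F}^{2}$ and $\|\btau\cdot\bn\|_{0,\G_N}^{2}$, and these coincide with the boundary parts of $\tri(\btau,v)\tri^{2}$. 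It therefore suffices to bound $\|v\|_{1,\O}^{2}+\|\btau\|_{H(\divvr;\O)}^{2}$ by a constant times $J(\btau,v;0)$.

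Apply \eqref{eq:residual_stability} to $(\btau,v)$. This gives $\|v\|_{1,\O}+\|\btau\|_{H(\divvr;\O)}$ bounded by $C_S(1+\Lambda_1^{1/2})(\|R_1\|_{0,\O}+\|R_2\|_{0,\O})$ plus the two boundary norms $\|\tr^{g}_{\G_D}(v)\|_{1/2,\G_D}$ and $\|\tr^{d}_{\G_N}(\btau\cdot\bn)\|_{\Hm00(\G_N)}$.

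\begin{itemize}
\item \emph{Neumann term.} Since $\btau\in\Sigma$, the trace $\tr^{d}_{\G_N}(\btau\cdot\bn)$ lies in $L^{2}(\G_N)$. The embedding \eqref{eq:embedN} then bounds its $\Hm00(\G_N)$ norm by $C\|\btau\cdot\bn\|_{0,\G_N}$.
\item \emph{Dirichlet term.} Since $v\in V^{D}_{\ast}$, the trace $g:=\tr^{g}_{\G_D}(v)$ lies in $W^{D}_{h}$. Lemma~\ref{lem:inverseD} bounds $\|g\|_{1/2,\G_D}$ by $C(\sum_{F\in\cE_D}h_F^{-1}\|v\|_{0,F}^{2})^{1/2}$, with $C$ independent of $h$.
\end{itemize}

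Squaring and using $(a+b+c+d)^{2}\le4(a^{2}+b^{2}+c^{2}+d^{2})$ gives $\|v\|_{1,\O}^{2}+\|\btau\|_{H(\divvr;\O)}^{2}\le C\,J(\btau,v;0)$. Adding the boundary terms, which appear in $J(\btau,v;0)$ with constant one, yields $\tri(\btau,v)\tri^{2}\le(C+1)J(\btau,v;0)$. Hence $C_1=1/(C+1)$, which depends only on $C_S$, $\Lambda_1$, the embedding constant and the constant of Lemma~\ref{lem:inverseD}, none of which depends on the mesh size.

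All of the difficulty sits in the Dirichlet step. The quantity $h_F^{-1/2}\|\cdot\|_{0,F}$ does not control the $H^{1/2}$ norm of general traces, and this is why the theorem is restricted to $V^{D}_{\ast}$. Lemma~\ref{lem:inverseD} already supplies exactly the mesh-independent inverse estimate that is needed there. The remaining steps are bookkeeping: the stability estimate and the boundary bounds are combined without any compactness argument, so that the constants stay uniform in the mesh.
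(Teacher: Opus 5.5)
Your proposal is correct and follows essentially the same route as the paper: the upper bound from Lemma~\ref{lem:continuity}, and the lower bound by applying the residual stability estimate \eqref{eq:residual_stability}, with its Dirichlet norm bounded by the inverse inequality \eqref{eq:inverseD} on $W^{D}_{h}$ and its Neumann norm by the embedding \eqref{eq:embedN}. The boundary terms of $\tri(\btau,v)\tri^{2}$ are then absorbed directly, since they already appear in $J(\btau,v;0)$.
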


\begin{proof}
The upper bound is Lemma~\ref{lem:continuity}.  For the lower bound, the
functional is $J(\btau,v;0)=\|R_1\|_{0,\O}^{2}+\|R_2\|_{0,\O}^{2}
+\sum_{F\in\cE_D}h_F^{-1}\|v\|_{0,F}^{2}+\|\btau\cdot\bn\|_{0,\G_N}^{2}$, with
$R_1,R_2$ as in Corollary~\ref{lem:residual_rep}.  Since
$\tr^{g}_{\G_D}(v)\in W^{D}_{h}$, \eqref{eq:inverseD} and \eqref{eq:embedN}
bound the two boundary norms in \eqref{eq:residual_stability} by
$CJ(\btau,v;0)^{1/2}$, so $\|v\|_{1,\O}+\|\btau\|_{H(\divvr;\O)}\le
CJ(\btau,v;0)^{1/2}$.  The boundary terms of $\tri(\btau,v)\tri^{2}$ are
terms of $J(\btau,v;0)$.
\end{proof}

\begin{rem}[where discreteness enters]\label{rem:where_discrete}
The only discrete ingredient in Theorem~\ref{thm:threshold} is the space $V^{D}_{\ast}$ of \eqref{def:Vh}, whose
Dirichlet trace is restricted to $W^{D}_{h}$.  That restriction serves a single
purpose: to apply the inverse inequality \eqref{eq:inverseD}, which bounds the
fractional norm $\|\tr^{g}_{\G_D}(v)\|_{1/2,\G_D}$ by the weighted $L^{2}$
penalty and holds on the finite-dimensional trace space.  On $\G_N$ the
corresponding step is the embedding \eqref{eq:embedN}, valid for every
$q\in L^{2}(\G_N)$, so neither the flux nor the interior values of the scalar
are restricted.  The functional itself is defined on $\Sigma\times H^{1}(\O)$; the
restriction to $V^{D}_{\ast}$ belongs to the proof of the lower bound.
\end{rem}

The same ingredients give a second equivalence on $\Sigma\times V^{D}_{\ast}$,
the error equivalence, with respect to the exact solution.  The trace of the exact
solution on $\G_D$ is the datum $\phi$, which in general lies outside
$W^{D}_{h}$, so the Dirichlet datum enters through a data term.  It is
measured with a projection $Q_h$ of $L^{2}(\G_D)$ onto $W^{D}_{h}$, so that
$Q_hw_h=w_h$ for $w_h\in W^{D}_{h}$, bounded in the $h_F^{-1}$-weighted
$L^{2}(\G_D)$ norm:
\beq\label{def:Qh}
\sum_{F\in\cE_D}h_F^{-1}\|Q_hg\|^{2}_{0,F}
\le C_Q^{2}\sum_{F\in\cE_D}h_F^{-1}\|g\|^{2}_{0,F}
\qquad\forall g\in L^{2}(\G_D).
\eeq
Such projections exist.  The $h_F^{-1}$-weighted $L^{2}(\G_D)$ projection $P_h$,
\beq\label{def:Ph}
\sum_{F\in\cE_D}h_F^{-1}(g-P_hg,\,w_h)_{0,F}=0
\qquad\forall w_h\in W^{D}_{h},
\eeq
satisfies \eqref{def:Qh} with $C_Q=1$.  The Scott--Zhang projection
$S_h:L^{2}(\G_D)\to W^{D}_{h}$ \cite{SZ:90}, constructed on the
$(d-1)$-dimensional mesh $\cE_D$, satisfies \eqref{def:Qh} by its local
$L^{2}$-stability \cite[eq.~(3.7)]{AFKPP:13} and the comparability of
neighboring face diameters; moreover, $I-S_h$ is bounded in $H^{1/2}(\G_D)$
\cite[eq.~(3.10)]{AFKPP:13}.

\begin{thm}[error equivalence]\label{thm:aposteriori}
Let the uniqueness assumption, Assumption~\ref{assum:infsup}, hold, let the
data satisfy Assumption~\ref{assum:data}, let $(\bsigma,u)$ be the solution of
\eqref{fos}, and let $Q_h$ satisfy \eqref{def:Qh}.
Then, for all $(\btau,v)\in\Sigma\times V^{D}_{\ast}$,
\begin{align}
\label{eq:reliability}
c\,[J(\btau,v;f_1,\bff_2,\phi,\psi)^{1/2}+\|\phi-Q_h\phi\|_{1/2,\G_D}]
&\le\tri(\bsigma-\btau,u-v)\tri\\
\nonumber
&\le C\,[J(\btau,v;f_1,\bff_2,\phi,\psi)^{1/2}+\|\phi-Q_h\phi\|_{1/2,\G_D}],
\end{align}
with constants $c,C>0$ depending on $C_Q$ but independent of the mesh size.
\end{thm}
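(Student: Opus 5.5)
We work with the error pair $(\bsigma-\btau,u-v)$, and the key observation is that the functional $J(\btau,v;f_1,\bff_2,\phi,\psi)$ coincides with the homogeneous functional of the error, except that the Dirichlet trace of $u-v$ equals $\phi-v$ on $\G_D$ and is not discrete. Indeed, since $(\bsigma,u)$ solves \eqref{fos}, the two volume residuals of $(\btau,v)$ equal minus those of $(\bsigma-\btau,u-v)$, i.e.\ the $R_1,R_2$ of Corollary~\ref{lem:residual_rep} applied to the error. The Neumann residual is $\btau\cdot\bn-\psi=-(\bsigma-\btau)\cdot\bn$, which lies in $L^2(\G_N)$ by Assumption~\ref{assum:data} and $\btau\in\Sigma$. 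The Dirichlet residual is $v-\phi=-(u-v)$ on $\G_D$. Hence
$$J(\btau,v;f_1,\bff_2,\phi,\psi)=J(\bsigma-\btau,u-v;0),$$
and $\bsigma-\btau\in\Sigma$, but $u-v\notin V^D_\ast$ in general.

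For the upper bound (reliability), I will split the Dirichlet trace. Since $\phi\in H^{1/2}(\G_D)$, \eqref{eq:quotient_inf} gives $w\in H^1(\O)$ with $\tr^g_{\G_D}w=\phi-Q_h\phi$ and $\|w\|_{1,\O}\le 2\|\phi-Q_h\phi\|_{1/2,\G_D}$. Then $u-v-w$ has trace $Q_h\phi-\tr^g_{\G_D}v\in W^D_h$, so $(\bsigma-\btau,u-v-w)\in\Sigma\times V^D_\ast$. Theorem~\ref{thm:threshold} then gives $\tri(\bsigma-\btau,u-v-w)\tri\lesssim J(\bsigma-\btau,u-v-w;0)^{1/2}$. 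Next I bound $J(\bsigma-\btau,u-v-w;0)^{1/2}\le J(\bsigma-\btau,u-v;0)^{1/2}+(\text{terms in }w)$. The volume terms in $w$ are $\lesssim\|w\|_{1,\O}$. The Dirichlet term is better handled directly: on $\G_D$, $Q_h\phi-v=Q_h(\phi-v)$ because $v|_{\G_D}\in W^D_h$, so by \eqref{def:Qh} its weighted norm is at most $C_Q(\sum h_F^{-1}\|\phi-v\|_{0,F}^2)^{1/2}\le C_QJ^{1/2}$. In the same way I apply \eqref{eq:inverseD} and \eqref{eq:residual_stability} (rather than Theorem~\ref{thm:threshold} verbatim), which yields $\|u-v-w\|_{1,\O}+\|\bsigma-\btau\|_{H(\divvr)}\lesssim J^{1/2}+\|w\|_{1,\O}$. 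Adding $w$ back and including the boundary parts of $\tri\cdot\tri$, which are exactly the boundary terms of $J$, gives the upper bound.

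For the lower bound (efficiency), I will use the continuity of the error functional, which holds on all of $\Sigma\times H^1(\O)$ by Lemma~\ref{lem:continuity}: $J^{1/2}=J(\bsigma-\btau,u-v;0)^{1/2}\le C_2^{1/2}\tri(\bsigma-\btau,u-v)\tri$. For the data term, write $\phi-Q_h\phi=(I-Q_h)(\phi-v|_{\G_D})$. Then $\|\phi-v\|_{1/2,\G_D}\le\|u-v\|_{1,\O}$ by \eqref{eq:quotient_lift}, while $\|Q_h(\phi-v)\|_{1/2,\G_D}\le C C_Q(\sum h_F^{-1}\|\phi-v\|_{0,F}^2)^{1/2}$ by \eqref{eq:inverseD} and \eqref{def:Qh}, and the latter is a term of $\tri(\bsigma-\btau,u-v)\tri$. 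Thus $\|\phi-Q_h\phi\|_{1/2,\G_D}\lesssim\tri(\bsigma-\btau,u-v)\tri$.

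The main obstacle is the upper bound, where the non-discrete trace $\phi-v$ has to be split. The step I must check carefully is that the Dirichlet penalty of the corrected pair is controlled by $J$ through $Q_h$-stability in the weighted norm, not by the weighted norm of $\phi-Q_h\phi$. The latter would not be bounded by its $H^{1/2}$ norm.
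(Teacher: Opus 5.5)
Your proof is correct and follows the paper's argument: the identity $J(\btau,v;f_1,\bff_2,\phi,\psi)=J(\bsigma-\btau,u-v;0)$, the split of the Dirichlet trace into $\phi-Q_h\phi$ and $Q_h(\phi-v)\in W^{D}_{h}$ with the latter controlled by \eqref{eq:inverseD} and \eqref{def:Qh}, and a lower bound identical to the paper's. The only difference is that you carry out the split through a lifting $w$ of $\phi-Q_h\phi$ so that the pair lands in $\Sigma\times V^{D}_{\ast}$; the paper instead applies \eqref{eq:residual_stability}, which holds for every scalar in $H^{1}(\O)$, directly to the error and splits $\|\phi-v\|_{1/2,\G_D}$ by the triangle inequality, so it needs neither the lifting nor the extra volume terms in $w$.
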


\begin{proof}
Let $e:=(\bsigma-\btau,u-v)$ and $\eta:=J(\btau,v;f_1,\bff_2,\phi,\psi)^{1/2}$.
Since $(\bsigma,u)$ solves \eqref{fos} and the residuals are affine,
\beq\label{eq:eta_identity}
\eta^{2}=J(\btau-\bsigma,v-u;0),
\eeq
so $\eta\le C_2^{1/2}\tri e\tri$ by Lemma~\ref{lem:continuity}.  Since
$\tr^{g}_{\G_D}(v)\in W^{D}_{h}$, $Q_h\phi-v=Q_h(\phi-v)\in W^{D}_{h}$, and
\eqref{eq:inverseD} and \eqref{def:Qh} give
\beq\label{eq:Qh_discrete}
\|Q_h\phi-v\|_{1/2,\G_D}\le C\Big(\sum_{F\in\cE_D}h_F^{-1}\|\phi-v\|^{2}_{0,F}\Big)^{1/2}.
\eeq
For the upper bound we apply \eqref{eq:residual_stability} to $e$: by
\eqref{eq:eta_identity} its volume residuals are at most $\sqrt2\,\eta$, by
\eqref{eq:embedN} its Neumann trace $\psi-\btau\cdot\bn$ is at most $C\eta$ in
$\Hm00(\G_N)$, and by \eqref{eq:Qh_discrete} its Dirichlet trace $\phi-v$ is at
most $\|\phi-Q_h\phi\|_{1/2,\G_D}+C\eta$ in $H^{1/2}(\G_D)$; the boundary terms
of $\tri e\tri^{2}$ are terms of $\eta^{2}$.  For the lower bound,
\eqref{eq:quotient_lift} and \eqref{eq:Qh_discrete} give
$\|\phi-Q_h\phi\|_{1/2,\G_D}\le\|\tr^{g}_{\G_D}(u-v)\|_{1/2,\G_D}+\|Q_h\phi-v\|_{1/2,\G_D}\le
C\tri e\tri$.
\end{proof}

Among all projections onto $W^{D}_{h}$, the $H^{1/2}(\G_D)$-orthogonal
projection gives the smallest data term, the best approximation error of
$\phi$ in $H^{1/2}(\G_D)$ by $W^{D}_{h}$.  As for the $H^{1/2}$-stable
projections of \cite{AFKPP:13}, every projection satisfying \eqref{def:Qh}
whose data term is equivalent to it gives the same result.  The projection
$S_h$ is one: since $S_hw_h=w_h$,
$\|\phi-S_h\phi\|_{1/2,\G_D}=\|(I-S_h)(\phi-w_h)\|_{1/2,\G_D}$ for every
$w_h\in W^{D}_{h}$, and the boundedness of $I-S_h$ in $H^{1/2}(\G_D)$ gives
\beq\label{eq:SZ_quasiopt}
\|\phi-S_h\phi\|_{1/2,\G_D}\le C\inf_{w_h\in W^{D}_{h}}\|\phi-w_h\|_{1/2,\G_D}
=:C\,\delta_h(\phi).
\eeq

\begin{cor}[best approximation of the Dirichlet datum]\label{cor:best_approx}
Let the uniqueness assumption, Assumption~\ref{assum:infsup}, hold, let the
data satisfy Assumption~\ref{assum:data}, and let $(\bsigma,u)$ be the solution
of \eqref{fos}.  Then, for all $(\btau,v)\in\Sigma\times V^{D}_{\ast}$,
$$
c\,[J(\btau,v;f_1,\bff_2,\phi,\psi)^{1/2}+\delta_h(\phi)]
\le\tri(\bsigma-\btau,u-v)\tri
\le C\,[J(\btau,v;f_1,\bff_2,\phi,\psi)^{1/2}+\delta_h(\phi)].
$$
\end{cor}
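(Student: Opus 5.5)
The plan is to apply Theorem~\ref{thm:aposteriori} with the Scott--Zhang projection $Q_h=S_h$ and then trade the data term $\|\phi-S_h\phi\|_{1/2,\G_D}$ for $\delta_h(\phi)$. Two facts recorded before the statement make $S_h$ admissible: it satisfies \eqref{def:Qh} with a constant $C_Q$ depending only on shape regularity, by local $L^2$-stability and comparability of neighbouring face diameters; and it is quasi-optimal in $H^{1/2}(\G_D)$, which is \eqref{eq:SZ_quasiopt}. So the constants of Theorem~\ref{thm:aposteriori} are mesh independent for this choice, and the corollary reduces to the two-sided equivalence $\delta_h(\phi)\le\|\phi-S_h\phi\|_{1/2,\G_D}\le C\,\delta_h(\phi)$.

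\emph{Upper bound.} Since $S_h\phi\in W^{D}_{h}$, the definition of $\delta_h(\phi)$ as an infimum gives $\delta_h(\phi)\le\|\phi-S_h\phi\|_{1/2,\G_D}$. The other direction is \eqref{eq:SZ_quasiopt}. Inserting $\|\phi-S_h\phi\|_{1/2,\G_D}\le C\delta_h(\phi)$ into the right inequality of \eqref{eq:reliability} yields the upper bound of the corollary.

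\emph{Lower bound.} I would use the trivial half, $\delta_h(\phi)\le\|\phi-S_h\phi\|_{1/2,\G_D}$, in the left inequality of \eqref{eq:reliability}. Alternatively, one can argue directly with the $H^{1/2}$-orthogonal projection. Fix $(\btau,v)$, put $w_h:=S_h\phi-\tr^{g}_{\G_D}(v)+\tr^{g}_{\G_D}(v)$ (in effect just $S_h\phi$), and write
$\delta_h(\phi)\le\|\phi-\tr^{g}_{\G_D}(v)\|_{1/2,\G_D}+\dots$,
which is the same route as the proof of Theorem~\ref{thm:aposteriori}.

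The only point needing care is that \eqref{eq:SZ_quasiopt} requires $I-S_h$ to be bounded on $H^{1/2}(\G_D)$ uniformly in $h$. This holds for the version of $S_h$ built on the $(d-1)$-dimensional mesh $\cE_D$, with the averaging sets chosen so that traces remain compatible at $\p\G_D$. It is quoted from \cite[eq.~(3.10)]{AFKPP:13}, so I take it as given. I expect this to be the only nontrivial ingredient; everything else is the triangle inequality and the infimum definition.
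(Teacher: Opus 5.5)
Your proposal is correct, and your upper bound is the paper's: Theorem~\ref{thm:aposteriori} with $Q_h=S_h$, followed by \eqref{eq:SZ_quasiopt}. For the lower bound you go through the left inequality of \eqref{eq:reliability} with $Q_h=S_h$ and then use $\delta_h(\phi)\le\|\phi-S_h\phi\|_{1/2,\G_D}$. This is valid, but it brings in the inverse inequality \eqref{eq:inverseD} and the stability \eqref{def:Qh} of $S_h$, which the lower bound does not need. The paper argues directly. Since $v\in V^{D}_{\ast}$, the trace $\tr^{g}_{\G_D}(v)$ itself lies in $W^{D}_{h}$, so taking it in the infimum and using \eqref{eq:quotient_lift} gives $\delta_h(\phi)\le\|\tr^{g}_{\G_D}(u-v)\|_{1/2,\G_D}\le\|u-v\|_{1,\O}\le\tri(\bsigma-\btau,u-v)\tri$. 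Together with $J^{1/2}\le C_2^{1/2}\tri(\bsigma-\btau,u-v)\tri$ from \eqref{eq:eta_identity} and Lemma~\ref{lem:continuity}, the lower constant then depends only on $C_2$. Your ``alternative'' points toward this but is garbled: $S_h\phi-\tr^{g}_{\G_D}(v)+\tr^{g}_{\G_D}(v)$ is just $S_h\phi$. The right choice is $w_h:=\tr^{g}_{\G_D}(v)$, for which the ``$+\dots$'' is empty and no projection is involved.
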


\begin{proof}
The upper bound follows from Theorem~\ref{thm:aposteriori} with $Q_h=S_h$ and
\eqref{eq:SZ_quasiopt}.  The lower bound follows from \eqref{eq:eta_identity}, Lemma~\ref{lem:continuity} and $\delta_h(\phi)\le\|\phi-v\|_{1/2,\G_D}\le\|u-v\|_{1,\O}\le\tri(\bsigma-\btau,u-v)\tri$,
by \eqref{eq:quotient_lift}.
\end{proof}

\section{LSFEM with weakly imposed boundary conditions and a priori error estimates}
\label{sec:apriori}

The least-squares finite element method with weakly imposed boundary
conditions reads: find $(\bsigma_h,u_h)\in\Sigma_h\times V_h$ such that
\beq\label{eq:ls_min}
J(\bsigma_h,u_h;f_1,\bff_2,\phi,\psi)
=\min_{(\btau_h,v_h)\in\Sigma_h\times V_h}J(\btau_h,v_h;f_1,\bff_2,\phi,\psi),
\eeq
with the functional \eqref{def:J}.  The exact data $\phi$ and $\psi$ enter the functional directly.

The minimization problem \eqref{eq:ls_min} is equivalent to its normal
equations, whose bilinear form $b$ satisfies
$b\big((\btau,v),(\btau,v)\big)=J(\btau,v;0)$.  Since
$\Sigma_h\times V_h\subset\Sigma\times V^{D}_{\ast}$, Theorem~\ref{thm:threshold}
shows that $b$ is coercive on $\Sigma_h\times V_h$ with constant $C_1$, so
\eqref{eq:ls_min} has a unique solution, and the matrix of the normal equations
is symmetric positive definite.

Three operators are used in the analysis.  For a face $F$, $\Pi^{F}_{k}$
denotes the $L^{2}(F)$-orthogonal projection onto $P_k(F)$, and for
$\psi\in L^{2}(\G_N)$ we write $\Pi_k\psi$ for the face-wise projection,
$(\Pi_k\psi)|_F:=\Pi^{F}_{k}(\psi|_F)$ for $F\in\cE_N$.  The canonical
Raviart--Thomas interpolation operator $\Pi^{RT}_h$ onto $\Sigma_h$ is defined
on $H(\divvr;\O)\cap H^{r}(\O)^{d}$ for any $r>0$ \cite{LiangZhang:23}, by the face moments
$\int_F(\Pi^{RT}_h\btau)\cdot\bn\,q\,ds=\int_F\btau\cdot\bn\,q\,ds$ for all
$q\in P_k(F)$ and all faces $F$, together with the interior moments against
$P_{k-1}(K)^{d}$ when $k\ge1$.  In particular
\beq\label{eq:RT_moment}
(\Pi^{RT}_h\btau)\cdot\bn|_F=\Pi^{F}_{k}(\btau\cdot\bn|_F)
\eeq
on every face on which $\btau\cdot\bn$ is square integrable.  For $r\le\frac12$
the face moments are understood in the generalized normal-trace sense; they
coincide with the face integrals above whenever the normal
trace lies in $L^{2}(F)$, in particular on $\G_N$ for the exact flux, whose
normal trace is $\psi\in L^{2}(\G_N)$.  For
$0<s\le k+1$, $\btau\in H^{s}(\O)^{d}$ and $\gradt\btau\in H^{s}(\O)$,
\beq\label{eq:RT_approx}
\|\btau-\Pi^{RT}_h\btau\|_{H(\divvr;\O)}
\le Ch_{\max}^{s}\big(|\btau|_{s,\O}+|\gradt\btau|_{s,\O}\big);
\eeq
for non-integer $s$ this follows from averaged Taylor polynomials and the Piola
transformation, as in \cite[(2.5)--(2.6)]{LiangZhang:23}.  Finally, $I_h$
denotes any quasi-interpolation operator onto $V_h$ that is defined on
$H^{1}(\O)$ and satisfies the standard local estimates
$\|u-I_hu\|_{0,K}+h_K|u-I_hu|_{1,K}\le Ch_K^{1+s}|u|_{1+s,\o_K}$ for
$0<s\le k+1$, where $\o_K$ is the patch of elements touching $K$; the
Scott--Zhang operator \cite{SZ:90} is one choice.  Then
\beq\label{eq:SZ_approx}
\|u-I_hu\|_{1,\O}\le Ch_{\max}^{s}|u|_{1+s,\O},
\qquad
\Big(\sum_{F\in\cE_D}h_F^{-1}\|u-I_hu\|^{2}_{0,F}\Big)^{1/2}\le Ch_{\max}^{s}|u|_{1+s,\O}.
\eeq
Only the second bound is specific to the weak Dirichlet condition, and it
involves only the elements touching $\G_D$: it follows from the local estimates
on $K_F$, the element-wise trace inequality
$\|w\|^{2}_{0,F}\le C\big(h_F^{-1}\|w\|^{2}_{0,K_F}+h_F|w|^{2}_{1,K_F}\big)$ and
the finite overlap of the patches.

\begin{lem}[Pythagoras on $\G_N$]\label{lem:pythagoras}
Let $\psi_h:=\Pi_k\psi$.  For every $\btau_h\in\Sigma_h$,
\beq\label{eq:pythagoras}
\|\btau_h\cdot\bn-\psi\|^{2}_{0,\G_N}
=\|\btau_h\cdot\bn-\psi_h\|^{2}_{0,\G_N}+\|\psi-\psi_h\|^{2}_{0,\G_N},
\eeq
so that, over $\Sigma_h\times V_h$, $J(\btau_h,v_h;f_1,\bff_2,\phi,\psi) =J(\btau_h,v_h;f_1,\bff_2,\phi,\psi_h) +\|\psi-\psi_h\|^{2}_{0,\G_N}$, the last term being independent of $(\btau_h,v_h)$.  In particular the two
minimization problems have the same solution.
\end{lem}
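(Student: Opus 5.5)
The identity is orthogonality of the face-wise projection error. For $\btau_h\in\Sigma_h$ and every face $F\in\cE_N$, the normal component $\btau_h\cdot\bn|_F$ lies in $P_k(F)$, by the definition of $\bRT_k(K)$ recalled in Section~\ref{sec:prelim}. The function $\psi_h|_F=\Pi^{F}_{k}(\psi|_F)$ also lies in $P_k(F)$. Hence $\btau_h\cdot\bn-\psi_h\in P_k(F)$ on each $F$. We write
$$\btau_h\cdot\bn-\psi=(\btau_h\cdot\bn-\psi_h)+(\psi_h-\psi),$$
expand the square face by face, and observe that the cross term
$$2(\btau_h\cdot\bn-\psi_h,\psi_h-\psi)_{0,F}$$
vanishes. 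This is the defining property of the $L^{2}(F)$-orthogonal projection: $\psi-\Pi^F_k\psi\perp P_k(F)$. Summing over $F\in\cE_N$ gives \eqref{eq:pythagoras}, since $\|\cdot\|_{0,\G_N}^2=\sum_{F\in\cE_N}\|\cdot\|_{0,F}^2$. This uses that the relative interiors of the boundary faces tile $\G_N$ up to a null set, as assumed for the mesh.

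For the statement about $J$, the functional \eqref{def:J} depends on $\psi$ only through its last term $\|\btau\cdot\bn-\psi\|^2_{0,\G_N}$. The three other terms are identical for the data $\psi$ and $\psi_h$. Substituting \eqref{eq:pythagoras} into that term gives
$$J(\btau_h,v_h;f_1,\bff_2,\phi,\psi)=J(\btau_h,v_h;f_1,\bff_2,\phi,\psi_h)+\|\psi-\psi_h\|^2_{0,\G_N}.$$
This holds for all $(\btau_h,v_h)\in\Sigma_h\times V_h$. The added constant does not depend on $(\btau_h,v_h)$, so the two functionals have the same minimizers over $\Sigma_h\times V_h$. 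The minimizer of \eqref{eq:ls_min} is unique by the coercivity from Theorem~\ref{thm:threshold}, and therefore the two problems have the same solution.

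There is no real obstacle here. The only point needing care is the claim that $\btau_h\cdot\bn|_F\in P_k(F)$ for an element of the global space $\Sigma_h$. This holds because $\Sigma_h\subset H(\divvr;\O)$ is piecewise $\bRT_k$, so the normal trace on a boundary face is the $L^2$ function coming from the single element $K_F$, and that trace is polynomial of degree $k$.
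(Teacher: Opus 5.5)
Your proposal is correct and follows the paper's proof essentially verbatim. You split $\btau_h\cdot\bn-\psi$ face by face, use that $\btau_h\cdot\bn-\psi_h\in P_k(F)$ while $\psi-\Pi^{F}_{k}\psi\perp P_k(F)$ to kill the cross term, and sum over $\cE_N$. Your added remark that the minimizer is unique by Theorem~\ref{thm:threshold} is a harmless supplement that the paper records at the start of Section~\ref{sec:apriori}.
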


\begin{proof}
On each face $F\in\cE_N$ we write
$\btau_h\cdot\bn-\psi=(\btau_h\cdot\bn-\psi_h)+(\psi_h-\psi)$.  The first summand
belongs to $P_k(F)$ because $\btau_h\cdot\bn|_F\in P_k(F)$, and the second is
$L^{2}(F)$-orthogonal to $P_k(F)$, so the cross term vanishes.  Summing over
the faces gives \eqref{eq:pythagoras}.
\end{proof}

\begin{thm}[a priori estimate]\label{thm:apriori}
Let the uniqueness assumption, Assumption~\ref{assum:infsup}, hold, let the
data satisfy Assumption~\ref{assum:data}, and let $(\bsigma,u)$ be the solution
of \eqref{fos} and $(\bsigma_h,u_h)$ the solution of \eqref{eq:ls_min}.  Let
$0<s\le k+1$, and assume $u\in H^{1+s}(\O)$, $\bsigma\in H^{s}(\O)^{d}$ and
$\gradt\bsigma\in H^{s}(\O)$.  Then
\beq\label{eq:apriori}
\|u-u_h\|_{1,\O}+\|\bsigma-\bsigma_h\|_{H(\divvr;\O)}
\le C\,h_{\max}^{s}\big(|u|_{1+s,\O}+|\bsigma|_{s,\O}+|\gradt\bsigma|_{s,\O}\big),
\eeq
with $C$ independent of the mesh size.
\end{thm}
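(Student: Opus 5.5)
The plan is to combine the error equivalence of Theorem~\ref{thm:aposteriori} (in the form of Corollary~\ref{cor:best_approx}) with the minimization property \eqref{eq:ls_min}, and then bound the functional at a well-chosen comparison pair. Since $(\bsigma_h,u_h)\in\Sigma_h\times V_h\subset\Sigma\times V^{D}_{\ast}$, the upper bound of Corollary~\ref{cor:best_approx} gives
$$
\|u-u_h\|_{1,\O}+\|\bsigma-\bsigma_h\|_{H(\divvr;\O)}\le\tri(\bsigma-\bsigma_h,u-u_h)\tri
\le C\big[J(\bsigma_h,u_h;f_1,\bff_2,\phi,\psi)^{1/2}+\delta_h(\phi)\big].
$$
By \eqref{eq:ls_min}, $J(\bsigma_h,u_h;\dots)\le J(\Pi^{RT}_h\bsigma,I_hu;\dots)$. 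Moreover, $\delta_h(\phi)\le\|\tr^{g}_{\G_D}(u-I_hu)\|_{1/2,\G_D}\le\|u-I_hu\|_{1,\O}$ by \eqref{eq:quotient_lift}, since $\tr^{g}_{\G_D}(I_hu)\in W^{D}_{h}$. This term is therefore $O(h_{\max}^s|u|_{1+s,\O})$ by \eqref{eq:SZ_approx}.

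It remains to bound $J(\Pi^{RT}_h\bsigma,I_hu;f_1,\bff_2,\phi,\psi)$. By \eqref{eq:eta_identity} it equals $J(\Pi^{RT}_h\bsigma-\bsigma,I_hu-u;0)$. I would not bound this by $C_2\tri\cdot\tri^2$ directly, because the Neumann term needs separate care. Instead I would treat the four terms separately:
\begin{itemize}
\item The two volume residuals are bounded by the triangle inequality and the coefficient bounds. This gives a bound by $C(\|\bsigma-\Pi^{RT}_h\bsigma\|_{H(\divvr;\O)}+\|u-I_hu\|_{1,\O})$, which is $O(h_{\max}^s)$ by \eqref{eq:RT_approx} and \eqref{eq:SZ_approx}.
\item The Dirichlet term $\sum_{F\in\cE_D}h_F^{-1}\|u-I_hu\|_{0,F}^2$ is exactly the second bound in \eqref{eq:SZ_approx}.
\item The Neumann term $\|\Pi^{RT}_h\bsigma\cdot\bn-\psi\|_{0,\G_N}$ equals $\|\psi-\Pi_k\psi\|_{0,\G_N}$ by \eqref{eq:RT_moment}, because $\bsigma\cdot\bn=\psi\in L^2(\G_N)$. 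This is the comparison flux meeting the projected Neumann data exactly.
\end{itemize}

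The main obstacle is this last term. Bounding $\|\psi-\Pi_k\psi\|_{0,\G_N}$ by a trace inequality would cost $h^{-1/2}$ and give only $h^{s-1/2}$. The bound must instead come from an estimate of the form $\|\bsigma\cdot\bn-\Pi_k(\bsigma\cdot\bn)\|_{0,\G_N}\lesssim h^{s}(\ldots)$. This does not follow from $\bsigma\in H^{s}$ alone, since the trace of $\bsigma$ has only $H^{s-1/2}$ regularity on faces.

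The intended remedy is Lemma~\ref{lem:pythagoras}. Replacing $\psi$ by $\psi_h=\Pi_k\psi$ changes $J$ on $\Sigma_h\times V_h$ only by a constant. Hence $(\bsigma_h,u_h)$ also minimizes $J(\cdot;f_1,\bff_2,\phi,\psi_h)$, and for this functional the Neumann term vanishes at $\Pi^{RT}_h\bsigma$. I would therefore run the error equivalence for the problem with projected Neumann datum $\psi_h$, with exact solution $(\bsigma^h,u^h)$. The argument then splits into two parts:
\begin{itemize}
\item The discrete error relative to $(\bsigma^h,u^h)$ is bounded as above, with zero Neumann term. What remains is to bound $J(\Pi^{RT}_h\bsigma,I_hu;\dots,\psi_h)$, which equals the volume and Dirichlet terms relative to $(\bsigma,u)$ and is $O(h^s)$.
\item The perturbation $(\bsigma-\bsigma^h,u-u^h)$ is controlled in $H(\divvr)\times H^1$ via Theorem~\ref{thm:stability} by $\|\psi-\Pi_k\psi\|_{\Hm00(\G_N)}$.
\end{itemize}
The perturbation bound is the key negative-norm estimate. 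By orthogonality of $\Pi_k$ it is bounded by $\sup_w\sum_F(\psi-\Pi_k\psi,w-\Pi^F_0w)_F\lesssim h^{1/2}\|\psi-\Pi_k\psi\|_{0,\G_N}\|w\|_{1/2}$. The right-hand side is $\lesssim h^{s}|\bsigma|_{s}$-type, using face-wise approximation of the $H^{s-1/2}$ trace of $\bsigma$, via trace and Bramble--Hilbert arguments on $K_F$. One must only measure the discrete error in $H(\divvr)\times H^1$, not in the full $\tri\cdot\tri$ norm. I expect this step, making the negative-norm estimate rigorous for fractional $s$, to be the delicate part.
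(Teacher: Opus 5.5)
Your reduction is sound up to its last step. By Lemma~\ref{lem:pythagoras} the minimizer also minimizes $J(\cdot;f_1,\bff_2,\phi,\Pi_k\psi)$. The error equivalence for the problem with Neumann datum $\Pi_k\psi$ then gives an $O(h_{\max}^s)$ bound, since the Neumann residual of $(\Pi^{RT}_h\bsigma,I_hu)$ vanishes there, and Theorem~\ref{thm:stability} reduces the remaining perturbation to $\|\psi-\Pi_k\psi\|_{\Hm00(\G_N)}$.

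\textbf{The gap is how you propose to bound this negative norm.} Your chain $\|\psi-\Pi_k\psi\|_{\Hm00(\G_N)}\lesssim h_{\max}^{1/2}\|\psi-\Pi_k\psi\|_{0,\G_N}$, followed by a trace and Bramble--Hilbert argument on $K_F$, works only for $s>\frac12$.
\begin{itemize}
\item For $s\le\frac12$, a field in $H^s(K_F)^d$ has no $L^2(F)$ trace, so there is nothing to approximate face by face.
\item For $s<\frac12$, the intermediate bound $h_{\max}^{1/2}\|\psi-\Pi_k\psi\|_{0,\G_N}\lesssim h_{\max}^{s}(|\bsigma|_{s,\O}+|\gradt\bsigma|_{s,\O})$ is false. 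Take $u=0$ and a divergence-free $\bsigma$ of unit amplitude, supported in a layer of width $\epsilon$ along $\G_N$, whose normal trace oscillates on the scale $\epsilon$. The data are $f_1=0$, $\bff_2=-\bsigma$, $\phi=0$, $\psi=\bsigma\cdot\bn$. On a fixed mesh, as $\epsilon\to0$, $|\bsigma|_{s,\O}^2\lesssim\epsilon^{1-2s}\to0$. Meanwhile $\|\psi-\Pi_k\psi\|_{0,\G_N}$ stays of order one, because $\Pi_k\psi\to0$.
\end{itemize}
The step $\|q\|_{\Hm00(\G_N)}\lesssim h_{\max}^{1/2}\|q\|_{0,\G_N}$ discards exactly the cancellation that makes $\psi-\Pi_k\psi$ small in the negative norm. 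The theorem covers all $0<s\le k+1$, including the low-regularity range of the singular examples, so this is a real hole.

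It closes in one line with tools already in the paper. By \eqref{eq:RT_moment}, $\psi-\Pi_k\psi$ is the normal trace on $\G_N$ of $\bsigma-\Pi^{RT}_h\bsigma$. Hence \eqref{eq:hdiv_trace} and \eqref{eq:RT_approx} give $\|\psi-\Pi_k\psi\|_{\Hm00(\G_N)}\le\|\bsigma-\Pi^{RT}_h\bsigma\|_{H(\divvr;\O)}\lesssim h_{\max}^s(|\bsigma|_{s,\O}+|\gradt\bsigma|_{s,\O})$ for every $0<s\le k+1$.

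With this repair your argument is correct, but the auxiliary solution with datum $\Pi_k\psi$ is unnecessary, and the paper does without it. You measure against an exact solution, which forces the error equivalence and its data terms. The paper instead compares $z_h=(\bsigma_h,u_h)$ with the discrete pair $z_h^\ast=(\Pi^{RT}_h\bsigma,I_hu)$:
\begin{itemize}
\item Minimality and the triangle inequality for the affine residual give $J(z_h-z_h^\ast;0)^{1/2}\le2J(z_h^\ast;f_1,\bff_2,\phi,\Pi_k\psi)^{1/2}$.
\item The norm equivalence of Theorem~\ref{thm:threshold}, applied to $z_h-z_h^\ast\in\Sigma_h\times V_h$, then bounds $\|u_h-I_hu\|_{1,\O}+\|\bsigma_h-\Pi^{RT}_h\bsigma\|_{H(\divvr;\O)}$.
\item The Neumann term of $J(z_h^\ast;f_1,\bff_2,\phi,\Pi_k\psi)$ is zero, and its Dirichlet term is $\sum_{F\in\cE_D}h_F^{-1}\|u-I_hu\|^2_{0,F}$.
\end{itemize}
So neither $\psi-\Pi_k\psi$ nor $\delta_h(\phi)$ ever appears, and no negative-norm estimate is needed. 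What your route buys in addition is a bound, in the full $\tri\cdot\tri$ norm, on the distance from $(\bsigma_h,u_h)$ to the solution of the problem with projected Neumann datum. That is not needed for \eqref{eq:apriori}.
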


\begin{proof}
Let $z_h:=(\bsigma_h,u_h)$, $z_h^{\ast}:=(\Pi^{RT}_h\bsigma,I_hu)\in\Sigma_h\times V_h$
and $J(\cdot):=J(\cdot;f_1,\bff_2,\phi,\Pi_k\psi)$.  By
Lemma~\ref{lem:pythagoras}, $z_h$ minimizes $J$ over $\Sigma_h\times V_h$, so
$J(z_h-z_h^{\ast};0)^{1/2}\le J(z_h)^{1/2}+J(z_h^{\ast})^{1/2}\le2J(z_h^{\ast})^{1/2}$,
and Theorem~\ref{thm:threshold} gives
\beq\label{eq:apriori_step1}
\|u_h-I_hu\|_{1,\O}+\|\bsigma_h-\Pi^{RT}_h\bsigma\|_{H(\divvr;\O)}
\le2\sqrt2\,C_1^{-1/2}J(z_h^{\ast})^{1/2}.
\eeq
Since $(\bsigma,u)$ solves \eqref{fos} and $\Pi^{RT}_h\bsigma\cdot\bn=\Pi_k\psi$
on $\G_N$ by \eqref{eq:RT_moment}, the Neumann term of $J(z_h^{\ast})$
vanishes, its Dirichlet term is $\sum_{F\in\cE_D}h_F^{-1}\|u-I_hu\|^{2}_{0,F}$,
and its volume terms are at most
$C\big(\|\bsigma-\Pi^{RT}_h\bsigma\|_{H(\divvr;\O)}+\|u-I_hu\|_{1,\O}\big)^{2}$.
By \eqref{eq:RT_approx} and \eqref{eq:SZ_approx},
\beq\label{eq:apriori_step2}
J(z_h^{\ast})^{1/2}
\le Ch_{\max}^{s}\big(|u|_{1+s,\O}+|\bsigma|_{s,\O}+|\gradt\bsigma|_{s,\O}\big),
\eeq
and the triangle inequality with \eqref{eq:RT_approx}, \eqref{eq:SZ_approx},
\eqref{eq:apriori_step1} and \eqref{eq:apriori_step2} gives \eqref{eq:apriori}.
\end{proof}

\begin{rem}[on the a priori estimate]\label{rem:zero_defect}
The a priori estimate is stated in $H(\divvr;\O)\times H^{1}(\O)$, the norm of
the problem, and not in $\tri\cdot\tri$.  The Dirichlet term of
$\tri(\bsigma-\bsigma_h,u-u_h)\tri^{2}$ is of the same order, by
\eqref{eq:SZ_approx}, \eqref{eq:apriori_step1} and \eqref{eq:apriori_step2},
but by Lemma~\ref{lem:pythagoras} the Neumann term is at least
$\|\psi-\Pi_k\psi\|^{2}_{0,\G_N}$, which for $s>\frac12$ is in general only of
order $h_{\max}^{2s-1}$; $\tri\cdot\tri$ is the norm of the a posteriori
estimate.  The unweighted $L^{2}(\G_N)$ penalty, although stronger than the
natural $\Hm00(\G_N)$ norm, enters the proof only through
$\|\Pi^{RT}_h\bsigma\cdot\bn-\Pi_k\psi\|_{0,\G_N}$, which vanishes, so
\eqref{eq:apriori} is optimal.
\end{rem}

\section{A posteriori error estimation for LSFEM with weakly imposed boundary conditions}
\label{sec:aposteriori}

The functional itself is the estimator.  Every term of \eqref{def:J} is a sum
of squared $L^{2}$ norms over elements or faces, so $\eta_h^{2}:=J(\bsigma_h,u_h;f_1,\bff_2,\phi,\psi)$ is a sum of element
contributions, each consisting of the two volume residuals on an element $K$
and the boundary residuals on those faces of $K$ that lie in $\p\O$.  The same minimization therefore
returns local error indicators, computable when the boundary data are regular in the sense of
Assumption~\ref{assum:data}.

Since $\Sigma_h\times V_h\subset\Sigma\times V^{D}_{\ast}$,
Theorem~\ref{thm:aposteriori} applies to the solution $(\bsigma_h,u_h)$ of
\eqref{eq:ls_min}: for every $Q_h$ satisfying \eqref{def:Qh},
\beq\label{eq:apost_h}
c\,[\eta_h+\|\phi-Q_h\phi\|_{1/2,\G_D}]
\le\tri(\bsigma-\bsigma_h,u-u_h)\tri
\le C\,[\eta_h+\|\phi-Q_h\phi\|_{1/2,\G_D}].
\eeq
The data term in \eqref{eq:reliability} depends only on $\phi$, the mesh and
$Q_h$, and by Corollary~\ref{cor:best_approx} it can be replaced by
$\delta_h(\phi)$.  The next lemma bounds it, together with the weighted
$L^{2}$ part of the Dirichlet error, by the error of any $w_h\in W^{D}_{h}$.

\begin{lem}[quasi-optimality of $Q_h$ on $\G_D$]\label{lem:oscD}
Let $Q_h$ satisfy \eqref{def:Qh}.  For every $\phi\in H^{1/2}(\G_D)$,
\beq\label{eq:oscD_bound}
\|\phi-Q_h\phi\|_{1/2,\G_D}
+\Big(\sum_{F\in\cE_D}h_F^{-1}\|\phi-Q_h\phi\|^{2}_{0,F}\Big)^{1/2}
\le C\inf_{w_h\in W^{D}_{h}}\Big\{\|\phi-w_h\|_{1/2,\G_D}
+\Big(\sum_{F\in\cE_D}h_F^{-1}\|\phi-w_h\|^{2}_{0,F}\Big)^{1/2}\Big\},
\eeq
with $C$ depending only on $C_Q$, shape regularity, $k$ and $d$.
\end{lem}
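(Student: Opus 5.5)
The plan is to use the projection property $Q_hw_h=w_h$ for $w_h\in W^{D}_{h}$, the stability \eqref{def:Qh} of $Q_h$ in the weighted $L^{2}$ norm, and the inverse inequality of Lemma~\ref{lem:inverseD}. Fix an arbitrary $w_h\in W^{D}_{h}$ and write
$$\phi-Q_h\phi=(\phi-w_h)-Q_h(\phi-w_h).$$
Let $\|g\|_{h,D}:=\big(\sum_{F\in\cE_D}h_F^{-1}\|g\|^{2}_{0,F}\big)^{1/2}$ denote the weighted norm. If $\|\phi-w_h\|_{h,D}=\infty$, the bound for this $w_h$ is trivial, so we may assume it is finite; this also makes $Q_h(\phi-w_h)$ meaningful through \eqref{def:Qh}.

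The weighted $L^{2}$ part follows directly. By the triangle inequality and \eqref{def:Qh},
$$\|\phi-Q_h\phi\|_{h,D}\le\|\phi-w_h\|_{h,D}+\|Q_h(\phi-w_h)\|_{h,D}\le(1+C_Q)\|\phi-w_h\|_{h,D}.$$

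For the $H^{1/2}(\G_D)$ part, the triangle inequality gives
$$\|\phi-Q_h\phi\|_{1/2,\G_D}\le\|\phi-w_h\|_{1/2,\G_D}+\|Q_h(\phi-w_h)\|_{1/2,\G_D}.$$
The second term is the only point where a fractional norm of a projected quantity must be controlled. Since $Q_h(\phi-w_h)\in W^{D}_{h}$, the inverse inequality \eqref{eq:inverseD} bounds it by $C\|Q_h(\phi-w_h)\|_{h,D}$, and \eqref{def:Qh} then gives at most $CC_Q\|\phi-w_h\|_{h,D}$. This step is why the weighted $L^{2}$ term must appear on the right-hand side of \eqref{eq:oscD_bound}: $Q_h$ is not assumed $H^{1/2}$-stable, so its $H^{1/2}$ output is controlled only through the discrete inverse estimate.

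Adding the two bounds gives the left-hand side of \eqref{eq:oscD_bound} bounded by
$$C\big(\|\phi-w_h\|_{1/2,\G_D}+\|\phi-w_h\|_{h,D}\big),$$
with $C$ depending only on $C_Q$ and the constant of Lemma~\ref{lem:inverseD}, hence only on $C_Q$, shape regularity, $k$ and $d$. Taking the infimum over $w_h\in W^{D}_{h}$ completes the proof. No step is a real obstacle; the only care needed is the interpretation of $Q_h$ on $L^{2}(\G_D)$ for $H^{1/2}$ data, handled by the finiteness reduction above.
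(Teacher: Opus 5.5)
Your proof is correct and follows the paper's argument exactly. Like the paper, you split $\phi-Q_h\phi=(\phi-w_h)-Q_h(\phi-w_h)$, bound the weighted part by \eqref{def:Qh}, bound the $H^{1/2}(\G_D)$ part by \eqref{eq:inverseD} together with \eqref{def:Qh}, and take the infimum over $w_h\in W^{D}_{h}$. Your finiteness reduction is unnecessary: $\phi-w_h\in L^{2}(\G_D)$ and there are finitely many faces, so the weighted norm is always finite and $Q_h$ is defined on $\phi-w_h$ directly.
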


\begin{proof}
Let $w_h\in W^{D}_{h}$.  Since $Q_hw_h=w_h$,
$\phi-Q_h\phi=(\phi-w_h)-Q_h(\phi-w_h)$ with $Q_h(\phi-w_h)\in W^{D}_{h}$.  By
\eqref{def:Qh}, the weighted $L^{2}$ norm of $Q_h(\phi-w_h)$ is at most $C_Q$
times that of $\phi-w_h$, and by \eqref{eq:inverseD} and \eqref{def:Qh} its
$H^{1/2}(\G_D)$ norm is at most a constant times the same quantity.  The
triangle inequality gives \eqref{eq:oscD_bound}, since $w_h$ is arbitrary.
\end{proof}

To make the right-hand side of \eqref{eq:oscD_bound}
computable, it suffices to insert one $w_h\in W^{D}_{h}$ whose two error terms
admit a local bound.  One choice is $w_h=S_h\phi$; for $v\in H^{1}(\G_D)$,
\beq\label{eq:SZ_bdry}
\|v-S_hv\|_{0,F}\le Ch_F\|\nabla_{\G}v\|_{0,\o^{D}_F}\qquad\forall F\in\cE_D,
\eeq
where $\nabla_{\G}$ is the surface gradient and $\o^{D}_F$ is the patch of
faces of $\cE_D$ touching $F$.  For $\phi\in H^{1}(\G_D)$ we define the
Dirichlet data oscillation
\beq\label{def:oscD}
\osc_D(\phi):=\Big(\sum_{F\in\cE_D}h_F\|\nabla_{\G}(\phi-S_h\phi)\|^{2}_{0,F}\Big)^{1/2},
\eeq
a variant of the oscillations used in adaptive finite element methods
\cite{AFKPP:13,FPP:14}.

\begin{cor}\label{cor:oscD}
Let the assumptions of Theorem~\ref{thm:aposteriori} hold, and let
$(\bsigma_h,u_h)$ be the solution of \eqref{eq:ls_min}.  If
$\phi\in H^{1}(\G_D)$, then the left-hand side of \eqref{eq:oscD_bound} is
bounded by $C\,\osc_D(\phi)$, and
\beq\label{eq:reliability_osc}
\tri(\bsigma-\bsigma_h,u-u_h)\tri\le C\,[\eta_h+\osc_D(\phi)],
\eeq
in which both terms are sums of face-wise or element-wise contributions.
\end{cor}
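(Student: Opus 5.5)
We apply Lemma~\ref{lem:oscD} with the particular choice $w_h=S_h\phi$ and then bound the two error terms of $\phi-S_h\phi$ by $\osc_D(\phi)$. Since $S_h$ is a projection onto $W^{D}_{h}$, \eqref{eq:SZ_bdry} applies to $v=\phi-S_h\phi$ as well, with $S_hv=0$. This gives $\|\phi-S_h\phi\|_{0,F}\le Ch_F\|\nabla_{\G}(\phi-S_h\phi)\|_{0,\o^{D}_F}$. Squaring, multiplying by $h_F^{-1}$, using the comparability of neighboring face diameters and the finite overlap of the patches $\o^{D}_F$, and summing over $F\in\cE_D$, we obtain
\[
\sum_{F\in\cE_D}h_F^{-1}\|\phi-S_h\phi\|^{2}_{0,F}\le C\sum_{F\in\cE_D}h_F\|\nabla_{\G}(\phi-S_h\phi)\|^{2}_{0,F}=C\,\osc_D(\phi)^{2}.
\]

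Next we bound the $H^{1/2}(\G_D)$ norm of $g:=\phi-S_h\phi\in H^{1}(\G_D)$ by the same quantity. This is a localized interpolation estimate of the type $\|g\|_{1/2,\G_D}^{2}\lesssim\sum_{F}\big(h_F^{-1}\|g\|_{0,F}^{2}+h_F\|\nabla_{\G}g\|_{0,F}^{2}\big)$, which is the standard weighted estimate for $H^{1/2}$ (cf.\ \cite{AFKPP:13,FPP:14}). To make it self-contained with the quotient norm \eqref{def_quotient}, we would build an explicit lifting $Eg\in H^{1}(\O)$. Choose a partition of unity $\{\chi_z\}$ subordinate to the vertex patches on $\G_D$, extend each $\chi_z g$ into the elements touching $\G_D$ by a local extension on a reference patch scaled by $h_z$, and cut it off to a layer of width $\sim h_z$. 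The scaling gives $\|Eg\|_{1,K}^{2}\lesssim h^{-1}\|g\|_{0,\o}^{2}+h\|\nabla_{\G}g\|_{0,\o}^{2}$ on each patch, and by finite overlap $\|g\|_{1/2,\G_D}\le\|Eg\|_{1,\O}\le C\,(\text{weighted }L^{2}+\osc_D)$. Combined with the first step, this yields $\|\phi-S_h\phi\|_{1/2,\G_D}\le C\osc_D(\phi)$. The main obstacle is this lifting near the relative boundary of $\G_D$, where the patch extensions must stay inside $\O$ and only the trace on $\G_D$ (not on $\G_N$) must be matched. Because the quotient norm on $\G_D$ lets the trace on $\G_N$ be arbitrary, a one-sided extension into the elements adjacent to $\G_D$ suffices, so no $\H00$-type difficulty arises.

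With both terms bounded, the right-hand side of \eqref{eq:oscD_bound} at $w_h=S_h\phi$ is at most $C\osc_D(\phi)$, which proves the first claim. For \eqref{eq:reliability_osc}, we insert this bound into the upper bound of \eqref{eq:apost_h}, which holds for every $Q_h$ satisfying \eqref{def:Qh}. The estimator $\eta_h$ is a sum of element contributions and $\osc_D(\phi)$ is a sum of face contributions, which gives the final assertion.
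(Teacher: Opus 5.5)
Your proposal is correct and follows the paper's proof: the same choice $w_h=S_h\phi$ in Lemma~\ref{lem:oscD}, the same application of \eqref{eq:SZ_bdry} to $\phi-S_h\phi$ using $S_h(\phi-S_h\phi)=0$, and the same insertion of the resulting bound into the upper bound of \eqref{eq:apost_h}. The only difference is in the fractional-norm step: the paper cites the Scott--Zhang-specific localization \cite[eq.~(3.11)]{AFKPP:13}, whereas you combine the general localized bound $\|g\|_{1/2,\G_D}^2\lesssim\sum_F(h_F^{-1}\|g\|_{0,F}^2+h_F\|\nabla_\G g\|_{0,F}^2)$ with your first step and sketch a patchwise lifting, which correctly uses that the quotient norm leaves the trace on $\G_N$ free.
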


\begin{proof}
Take $w_h=S_h\phi$ in Lemma~\ref{lem:oscD}.  Put $w:=\phi-S_h\phi$.  Since $S_h$
is a projection onto $W^{D}_{h}$, $S_hw=0$, and \eqref{eq:SZ_bdry} applied to
$w$ gives $\|w\|_{0,F}=\|w-S_hw\|_{0,F}\le Ch_F\|\nabla_{\G}w\|_{0,\o^{D}_F}$.
The patches overlap finitely and neighboring faces have comparable diameters,
so $\sum_{F\in\cE_D}h_F^{-1}\|w\|^{2}_{0,F}\le C\,\osc_D(\phi)^{2}$.  For the
fractional norm, the localization
$\|\phi-S_h\phi\|_{1/2,\G_D}\le C\,\osc_D(\phi)$ is
\cite[eq.~(3.11)]{AFKPP:13}, derived there from \cite[Thm.~3]{KOP:13}.  The
estimate \eqref{eq:reliability_osc} then follows from \eqref{eq:apost_h}.
\end{proof}

\begin{rem}[the Dirichlet data term]\label{rem:oscD}
The face-wise oscillation
$\big(\sum_{F\in\cE_D}h_F\|(1-\Pi^{F}_{k})\nabla_{\G}\phi\|^{2}_{0,F}\big)^{1/2}$
of \cite[eq.~(2.8)]{AFKPP:13} is bounded by $\osc_D(\phi)$, because
$\nabla_{\G}(S_h\phi)|_F\in P_k(F)^{d-1}$; the reverse inequality is
\cite[Prop.~3.1]{AFKPP:13} for meshes generated by newest vertex bisection.

Below $H^{1}(\G_D)$ the data term is still controlled, only less explicitly.
For $0<t<\frac12$ let
$H^{1/2+t}(\G_D):=[H^{1/2}(\G_D),H^{1}(\G_D)]_{2t}$ be the interpolation
space, let $\phi\in H^{1/2+t}(\G_D)$, and take $w_h=S_h\phi$ in
Lemma~\ref{lem:oscD}.  The operator $I-S_h$ is bounded from $H^{1/2}(\G_D)$ to
$H^{1/2}(\G_D)$ \cite[eq.~(3.10)]{AFKPP:13}, and from $H^{1/2}(\G_D)$ to the
$h_F^{-1}$-weighted $L^{2}(\G_D)$, by interpolation between its local
$L^{2}$-stability and its first-order approximation property
\cite[eqs.~(3.7), (3.9)]{AFKPP:13}.  From $H^{1}(\G_D)$ to either space its
norm is at most $Ch_{\max}^{1/2}$, by \eqref{eq:SZ_bdry}, the
$H^{1}(\G_D)$-stability of $S_h$ \cite[eq.~(3.8)]{AFKPP:13}, and
$\|v\|_{1/2,\G_D}\le C\|v\|_{0,\G_D}^{1/2}\|v\|_{1,\G_D}^{1/2}$.  Interpolating
between the two cases gives
$$
\|\phi-Q_h\phi\|_{1/2,\G_D}
+\Big(\sum_{F\in\cE_D}h_F^{-1}\|\phi-Q_h\phi\|^{2}_{0,F}\Big)^{1/2}
\le Ch_{\max}^{t}\|\phi\|_{1/2+t,\G_D}.
$$
This is a rate in terms of the largest mesh size.
\end{rem}

\section{Non-intrusive a posteriori error estimation}
\label{sec:nonintrusive}

Theorem~\ref{thm:aposteriori} holds for every pair in
$\Sigma\times V^{D}_{\ast}$, whether or not it satisfies the boundary
conditions.  It therefore applies to approximations computed by other methods.  Let $\hat u_h\in V_h$ be such an approximation, for example the
Galerkin approximation of \eqref{weak} in $V_h$ with the Dirichlet datum
replaced by an interpolant or a projection of $\phi$ in $W^{D}_{h}$; in general
$\tr^{g}_{\G_D}(\hat u_h)\neq\phi$.  As in the non-intrusive estimator of
\cite{LiZhang:25}, a flux is recovered from $\hat u_h$ by minimizing the
functional with the scalar fixed: find $\hat\bsigma_h\in\Sigma_h$ such that
\beq\label{eq:recovery}
J(\hat\bsigma_h,\hat u_h;f_1,\bff_2,\phi,\psi)
=\min_{\btau_h\in\Sigma_h}J(\btau_h,\hat u_h;f_1,\bff_2,\phi,\psi),
\eeq
and let $\hat\eta:=J(\hat\bsigma_h,\hat u_h;f_1,\bff_2,\phi,\psi)^{1/2}$.
The bilinear form of the normal equations of \eqref{eq:recovery} is
$(A^{-1}\btau,\bchi)_{0,\O}+(\gradt\btau,\gradt\bchi)_{0,\O}
+(\btau\cdot\bn,\bchi\cdot\bn)_{0,\G_N}$, whose value at $\bchi=\btau$ is at
least $\min\{\Lambda_1^{-1},1\}\|\btau\|^{2}_{H(\divvr;\O)}$, so
\eqref{eq:recovery} has a unique solution; its matrix is the flux block of the
matrix of the normal equations of \eqref{eq:ls_min}.  The Dirichlet term of
$\hat\eta^{2}$, $\sum_{F\in\cE_D}h_F^{-1}\|\phi-\hat u_h\|^{2}_{0,F}$, does not
depend on the recovered flux and is the computable violation of the Dirichlet
condition.  As in Section~\ref{sec:aposteriori}, $\hat\eta^{2}$ is a sum of
element contributions, which serve as local error indicators.

\begin{thm}[non-intrusive error equivalence]\label{thm:nonintrusive}
Let the uniqueness assumption, Assumption~\ref{assum:infsup}, hold, let the
data satisfy Assumption~\ref{assum:data}, let $(\bsigma,u)$ be the solution of
\eqref{fos}, let $Q_h$ satisfy \eqref{def:Qh}, let $\hat u_h\in V_h$, and let
$\hat\bsigma_h$ be the solution of \eqref{eq:recovery}.  Then
\beq\label{eq:ni_pair}
c\,[\hat\eta+\|\phi-Q_h\phi\|_{1/2,\G_D}]
\le\tri(\bsigma-\hat\bsigma_h,u-\hat u_h)\tri
\le C\,[\hat\eta+\|\phi-Q_h\phi\|_{1/2,\G_D}],
\eeq
with the constants of Theorem~\ref{thm:aposteriori}.  The error of $\hat u_h$
alone, $\tri(0,u-\hat u_h)\tri^{2}=\|u-\hat u_h\|^{2}_{1,\O}
+\sum_{F\in\cE_D}h_F^{-1}\|\phi-\hat u_h\|^{2}_{0,F}$, satisfies
\beq\label{eq:ni_scalar}
\tri(0,u-\hat u_h)\tri\le C\,[\hat\eta+\|\phi-Q_h\phi\|_{1/2,\G_D}],
\qquad
\hat\eta\le C_2^{1/2}\Big(\tri(0,u-\hat u_h)\tri^{2}
+\inf_{\btau_h\in\Sigma_h}\tri(\bsigma-\btau_h,0)\tri^{2}\Big)^{1/2}.
\eeq
\end{thm}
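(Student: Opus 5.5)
The plan is to reduce everything to Theorem~\ref{thm:aposteriori} applied to the pair $(\hat\bsigma_h,\hat u_h)$. Since $\hat\bsigma_h\in\Sigma_h\subset\Sigma$ and $\hat u_h\in V_h\subset V^{D}_{\ast}$, this pair lies in $\Sigma\times V^{D}_{\ast}$. By definition, $\hat\eta=J(\hat\bsigma_h,\hat u_h;f_1,\bff_2,\phi,\psi)^{1/2}$. Then \eqref{eq:reliability} with $(\btau,v)=(\hat\bsigma_h,\hat u_h)$ is exactly \eqref{eq:ni_pair}, with the same constants $c,C$. No property of the recovery \eqref{eq:recovery} is needed here beyond $\hat\bsigma_h\in\Sigma_h$.

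For the first bound in \eqref{eq:ni_scalar}, I will observe directly from \eqref{def:norm} that $\tri(0,u-\hat u_h)\tri\le\tri(\bsigma-\hat\bsigma_h,u-\hat u_h)\tri$. The norm is a sum of nonnegative terms, and the scalar terms coincide: the Dirichlet term is $\sum_F h_F^{-1}\|u-\hat u_h\|_{0,F}^2$, and $u=\phi$ on $\G_D$. The upper bound in \eqref{eq:ni_pair} then gives the claim.

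For the efficiency bound, I will use the minimality of $\hat\bsigma_h$. For every $\btau_h\in\Sigma_h$, $\hat\eta^2\le J(\btau_h,\hat u_h;f_1,\bff_2,\phi,\psi)$. By \eqref{eq:eta_identity} this equals $J(\btau_h-\bsigma,\hat u_h-u;0)$, and Lemma~\ref{lem:continuity} bounds it by $C_2\tri(\bsigma-\btau_h,u-\hat u_h)\tri^2$. The squared norm $\tri\cdot\tri^2$ splits additively into its flux part and its scalar part, with no cross terms in \eqref{def:norm}. Hence $\tri(\bsigma-\btau_h,u-\hat u_h)\tri^2=\tri(\bsigma-\btau_h,0)\tri^2+\tri(0,u-\hat u_h)\tri^2$. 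Taking the infimum over $\btau_h$ yields the second inequality in \eqref{eq:ni_scalar}.

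There is no real obstacle. The only point to check is that $\tri(\bsigma-\btau_h,0)\tri$ is finite, which holds because $\bsigma\in\Sigma$ by Assumption~\ref{assum:data} ($\bsigma\cdot\bn=\psi\in L^2(\G_N)$). One should also record that the identity \eqref{eq:eta_identity} holds for arbitrary pairs in $\Sigma\times H^1(\O)$, not only for the least-squares solution.
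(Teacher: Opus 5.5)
Your proposal is correct and matches the paper's proof step for step: you apply Theorem~\ref{thm:aposteriori} to $(\hat\bsigma_h,\hat u_h)\in\Sigma_h\times V_h\subset\Sigma\times V^{D}_{\ast}$, use the additive splitting of $\tri\cdot\tri^{2}$ into flux and scalar parts for the first bound in \eqref{eq:ni_scalar}, and combine the minimality of $\hat\bsigma_h$ in \eqref{eq:recovery} with \eqref{eq:eta_identity} and Lemma~\ref{lem:continuity} for the second.
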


\begin{proof}
Since $(\hat\bsigma_h,\hat u_h)\in\Sigma_h\times V_h\subset\Sigma\times V^{D}_{\ast}$,
\eqref{eq:ni_pair} is Theorem~\ref{thm:aposteriori}.  The norm \eqref{def:norm}
splits as $\tri(\btau,v)\tri^{2}=\tri(\btau,0)\tri^{2}+\tri(0,v)\tri^{2}$, so
the first bound in \eqref{eq:ni_scalar} follows from \eqref{eq:ni_pair}.  For
every $\btau_h\in\Sigma_h$, \eqref{eq:recovery}, \eqref{eq:eta_identity} and
Lemma~\ref{lem:continuity} give
$\hat\eta\le J(\btau_h-\bsigma,\hat u_h-u;0)^{1/2}
\le C_2^{1/2}\tri(\bsigma-\btau_h,u-\hat u_h)\tri$, and the splitting of the
norm gives the second bound.
\end{proof}

\section{Sharp weights and over-penalization}
\label{sec:weights}

The analysis above uses the weights $h_F^{-1}$ on $\G_D$ and $1$ on $\G_N$.
This section considers the weights $h_F^{-\alpha}$ on $\G_D$ and $h_F^{-\beta}$
on $\G_N$.  We write $J_{\alpha,\beta}$ and $\tri\cdot\tri_{\alpha,\beta}$ for the
functional \eqref{def:J} and the norm \eqref{def:norm} with these weights, so
that $J=J_{1,0}$ and $\tri\cdot\tri=\tri\cdot\tri_{1,0}$.  Stability bounds the
exponents from below; larger exponents leave the a priori estimate unchanged up
to the approximation of the Dirichlet datum in the discrete trace space.

\begin{prop}[the admissible weights for stability]\label{rem:other_exponents}
Let the uniqueness assumption, Assumption~\ref{assum:infsup}, hold.  If
$\alpha\ge1$ and $\beta\ge0$, then Theorem~\ref{thm:threshold} holds for
$J_{\alpha,\beta}$ and $\tri\cdot\tri_{\alpha,\beta}$ with the same constants.
If $\alpha<1$ or $\beta<0$, no constant $C_1$ independent of the mesh size
satisfies the lower bound in \eqref{eq:normequiv}, already on quasi-uniform
meshes.
\end{prop}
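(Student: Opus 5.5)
For $\alpha\ge1$, $\beta\ge0$ the plan is to compare the weights face by face. Since $h_F\le h_{\max}\le1$, we have $h_F^{-\alpha}\ge h_F^{-1}$ and $h_F^{-\beta}\ge1$. Hence
$$
J_{1,0}(\btau,v;0)\le J_{\alpha,\beta}(\btau,v;0)
\quad\text{and}\quad
\tri(\btau,v)\tri_{1,0}\le\tri(\btau,v)\tri_{\alpha,\beta}.
$$
The upper bound $J_{\alpha,\beta}(\btau,v;0)\le C_2\tri(\btau,v)\tri^2_{\alpha,\beta}$ is the proof of Lemma~\ref{lem:continuity} verbatim: the boundary terms of functional and norm coincide, and the volume terms are bounded as before. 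For the lower bound, the proof of Theorem~\ref{thm:threshold} gives $\|v\|_{1,\O}^2+\|\btau\|^2_{H(\divvr;\O)}\le C J_{1,0}(\btau,v;0)\le CJ_{\alpha,\beta}(\btau,v;0)$. The weighted boundary terms of $\tri\cdot\tri_{\alpha,\beta}$ are literally terms of $J_{\alpha,\beta}(\cdot;0)$. Together these give $C_1\tri\cdot\tri^2_{\alpha,\beta}\le J_{\alpha,\beta}(\cdot;0)$ with the same $C_1$.

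For failure, I would construct on quasi-uniform meshes ($h_F\simeq h$) discrete pairs in $\Sigma_h\times V_h$ for which the ratio $J_{\alpha,\beta}(\btau,v;0)/\tri(\btau,v)\tri^2_{\alpha,\beta}$ tends to zero. It suffices to make the ratio of $J$ to the $H(\divvr)\times H^1$ part vanish.

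For $\alpha<1$, take the exact solution $(\btau,v)=(\bsigma_w,w)$ of \eqref{fos} with $f_1=0$, $\bff_2=0$, $\psi=0$ and a fixed smooth Dirichlet datum $\phi=g\ne0$. Then replace it by its discrete approximation $(\Pi^{RT}_h\bsigma_w,I_hw)$. The volume residuals and the Neumann residual are $O(h^s)$ by \eqref{eq:RT_approx}, \eqref{eq:SZ_approx} and \eqref{eq:RT_moment}. The Dirichlet term is $\sum h_F^{-\alpha}\|I_hw\|^2_{0,F}\simeq h^{-\alpha}\|g\|^2_{0,\G_D}$. The norm, however, contains $\|I_hw\|^2_{1,\O}\simeq1$ and also $h^{-\alpha}\|g\|^2_{0,\G_D}$, so this does not immediately give a ratio tending to zero.

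A better choice is a discrete $g_h\in W^D_h$ that oscillates at the mesh scale with $\|g_h\|_{0,\G_D}=1$, lifted by the solution $w$ of the homogeneous problem with Dirichlet datum $g_h$. Then $\|w\|_{1}\simeq\|g_h\|_{1/2}\simeq h^{-1/2}$, so the norm is at least of order $h^{-1}$. The functional of the discrete approximation is about $h^{-\alpha}+(\text{approximation error})$. I expect this approximation error to be the main obstacle: $w$ has a boundary layer of width $h$, so $I_hw$ is not accurate.

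To get around this, I would instead take the discrete lift directly: $v=L_hg_h$ from Lemma~\ref{lem:inverseD}, supported in one layer of elements, with $\btau=-(A\nabla v+\bb v)$ replaced by a discrete flux. The cleanest route is for the test pair not to be discrete at all in the interior. The lower bound is claimed on $\Sigma\times V^D_\ast$, and $V^D_\ast$ only restricts the Dirichlet trace. So take $v$ to be the exact solution $w$ above (its trace $g_h$ lies in $W^D_h$) and $\btau=\bsigma_w$. Then $R_1=R_2=0$ and the Neumann trace vanishes, so
$$
J_{\alpha,\beta}=h^{-\alpha}\|g_h\|_{0}^2\simeq h^{-\alpha},
\qquad
\tri\cdot\tri^2_{\alpha,\beta}\ge\|w\|^2_{1}\gtrsim\|g_h\|^2_{1/2,\G_D}\simeq h^{-1}.
$$
The ratio is $h^{1-\alpha}\to0$. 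The only delicate ingredient is the lower bound $\|g_h\|_{1/2}\gtrsim h^{-1/2}\|g_h\|_0$ for a mesh-scale oscillating nodal function. This is the sharpness of the inverse inequality \eqref{eq:inverseD}, which I would prove for a $\pm$ alternating hat function via the Slobodeckij seminorm, or via duality with $\|g_h\|_{-1/2}\lesssim h^{1/2}$ and $\|g_h\|_0^2\le\|g_h\|_{1/2}\|g_h\|_{-1/2}$.

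For $\beta<0$ the construction is analogous. Take $\btau$ to be an oscillating Raviart--Thomas bubble-type field with normal trace $q_h$ on $\G_N$, $\|q_h\|_0=1$, $\|q_h\|_{\Hm00}\lesssim h^{1/2}$. Then take $(\btau,v)$ to be the exact solution with Neumann datum $q_h$ and zero other data. Then $J=h^{-\beta}$, while the norm is at least $\|q_h\|^2_{0,\G_N}$ weighted by $h^{-\beta}$, so the comparison must instead go through the $\tri\cdot\tri_{1,0}$-type quantity. The failure should then be stated against the norm $\tri\cdot\tri_{\alpha,\beta}$ correctly, which needs care. Here the right pair is different. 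Take $v$ with $\tr_{\G_D}v=0$ and an $H(\divvr)$ field with $\|\btau\|_{H(\divvr)}\simeq1$, supported near $\G_N$ and arranged so that $R_1=R_2=0$, with normal trace $q$ of $L^2$ norm $\simeq1$ but $\Hm00$ norm $\simeq1$ as well. Then $J=h^{-\beta}\to0$ while $\tri\cdot\tri^2\ge\|\btau\|^2_{H(\divvr)}\simeq1$. This works with any fixed smooth $\psi\neq0$: the exact solution with datum $\psi$ gives $J_{\alpha,\beta}=h^{-\beta}\|\psi\|^2_{0,\G_N}\to0$ while $\|\btau\|_{H(\divvr)}\gtrsim\|\psi\|_{\Hm00}>0$. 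In the same spirit, the $\alpha<1$ case can alternatively use the oscillating $g_h$ as above. The main obstacle is the sharp inverse estimate for $g_h$.
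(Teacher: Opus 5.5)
Your final arguments are correct and essentially the paper's. For $\alpha\ge1$, $\beta\ge0$, the inequalities $h_F^{-\alpha}\ge h_F^{-1}$ and $h_F^{-\beta}\ge1$ let the proof of Theorem~\ref{thm:threshold} go through unchanged. For the failure you use, as the paper does, exact solutions of \eqref{fos} with a discrete Dirichlet datum $g_h\in W^{D}_{h}$ (respectively a fixed Neumann datum such as $\psi\equiv1$), which lie in $\Sigma\times V^{D}_{\ast}$ and have vanishing volume residuals.

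The one difference is the choice of $g_h$: the paper takes a single nodal basis function, for which $\|g_h\|^{2}_{0,\G_D}\simeq h_{\max}^{d-1}$ and $\|g_h\|^{2}_{1/2,\G_D}\gtrsim h_{\max}^{d-2}$ follow by scaling alone. This removes the sharp inverse estimate you flag as delicate for a globally oscillating $g_h$.
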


\begin{proof}
Since $h_F\le1$, the weights satisfy $h_F^{-\alpha}\ge h_F^{-1}$ and
$h_F^{-\beta}\ge1$ for $\alpha\ge1$ and $\beta\ge0$.  The proof of
Theorem~\ref{thm:threshold} uses the boundary terms through \eqref{eq:inverseD} and
\eqref{eq:embedN}, which therefore apply with the larger weights, and the boundary
terms of $J_{\alpha,\beta}(\cdot;0)$ and $\tri\cdot\tri_{\alpha,\beta}$ coincide.

For $\alpha<1$, let $g_h\in W^{D}_{h}$ be a nodal basis function of a node
interior to $\G_D$, supported on one patch of faces of diameter
$\simeq h_{\max}$.  Scaling gives $\|g_h\|^{2}_{0,\G_D}\simeq h_{\max}^{d-1}$ and
$\|g_h\|^{2}_{1/2,\G_D}\gtrsim h_{\max}^{d-2}$.  Let $(\btau,v)$ solve
\eqref{fos} with $f_1=0$, $\bff_2=\bzero$, $\phi=g_h$ and $\psi=0$; then
$(\btau,v)\in\Sigma\times V^{D}_{\ast}$, both volume residuals vanish, and
$$
J_{\alpha,\beta}(\btau,v;0)=\sum_{F\in\cE_D}h_F^{-\alpha}\|g_h\|^{2}_{0,F}
\simeq h_{\max}^{d-1-\alpha},
\qquad
\tri(\btau,v)\tri^{2}_{\alpha,\beta}\ge\|v\|^{2}_{1,\O}\ge\|g_h\|^{2}_{1/2,\G_D}
\gtrsim h_{\max}^{d-2}
$$
by \eqref{eq:quotient_lift}.  The ratio of the second to the first is
$\gtrsim h_{\max}^{\alpha-1}$, which is unbounded.

For $\beta<0$, let $(\btau,v)$ solve \eqref{fos} with $f_1=0$, $\bff_2=\bzero$,
$\phi=0$ and $\psi\equiv1$; then $(\btau,v)\in\Sigma\times V^{D}_{\ast}$ and
$J_{\alpha,\beta}(\btau,v;0)=\sum_{F\in\cE_N}h_F^{-\beta}|F|\le h_{\max}^{-\beta}|\G_N|$,
which tends to zero, whereas
$\tri(\btau,v)\tri_{\alpha,\beta}\ge\|\btau\|_{H(\divvr;\O)}\ge\|1\|_{\Hm00(\G_N)}$ by
\eqref{eq:hdiv_trace}, a fixed positive number.
\end{proof}

In particular, the weight $h_F$ on $\G_N$, that is, $\beta=-1$, which the
scaling of $\Hm00(\G_N)$ suggests, destroys the norm equivalence.

Multiplying the Dirichlet and the Neumann boundary terms of the functional by
fixed constants $c_D,c_N>0$, with the norm \eqref{def:norm} unchanged, changes
only the constants: $C_1$ scales with $\min\{1,c_D,c_N\}$ and $C_2$ with
$\max\{1,c_D,c_N\}$.  Every choice $c_D,c_N>0$ is admissible.

Larger exponents are admissible for approximation as well.  On $\G_N$ this is
the mechanism of Section~\ref{sec:apriori}: Lemma~\ref{lem:pythagoras} holds face
by face and each weight is constant on its face, so for every
$\btau_h\in\Sigma_h$
\beq\label{eq:pythagoras_N}
\sum_{F\in\cE_N}h_F^{-\beta}\|\btau_h\cdot\bn-\psi\|^{2}_{0,F}
=\sum_{F\in\cE_N}h_F^{-\beta}\|\btau_h\cdot\bn-\Pi_k\psi\|^{2}_{0,F}
+\sum_{F\in\cE_N}h_F^{-\beta}\|\psi-\Pi_k\psi\|^{2}_{0,F}.
\eeq
On $\G_D$ the same mechanism uses the discrete trace space: let $P^{\alpha}_h\phi\in W^{D}_{h}$ be the orthogonal
projection of $\phi$ with respect to
$\sum_{F\in\cE_D}h_F^{-\alpha}(\cdot,\cdot)_{0,F}$.  For every $v_h\in V_h$,
$\tr^{g}_{\G_D}(v_h)-P^{\alpha}_h\phi\in W^{D}_{h}$, hence
\beq\label{eq:pythagoras_D}
\sum_{F\in\cE_D}h_F^{-\alpha}\|v_h-\phi\|^{2}_{0,F}
=\sum_{F\in\cE_D}h_F^{-\alpha}\|v_h-P^{\alpha}_h\phi\|^{2}_{0,F}
+\sum_{F\in\cE_D}h_F^{-\alpha}\|P^{\alpha}_h\phi-\phi\|^{2}_{0,F},
\eeq
and the minimizer of $J_{\alpha,\beta}$ over $\Sigma_h\times V_h$ is unchanged
when $\phi$ is replaced by $P^{\alpha}_h\phi$.  The projection is global,
because $W^{D}_{h}$ is continuous across faces.

\begin{thm}[over-penalization]\label{rem:overpenalize}
Let the uniqueness assumption, Assumption~\ref{assum:infsup}, hold, let the
data satisfy Assumption~\ref{assum:data}, let the regularity of
Theorem~\ref{thm:apriori} hold for some $0<s\le k+1$, and let $\alpha\ge1$ and
$\beta\ge0$.  Then the minimizer $(\bsigma_h,u_h)$ of
$J_{\alpha,\beta}(\cdot;f_1,\bff_2,\phi,\psi)$ over $\Sigma_h\times V_h$
satisfies
\beq\label{eq:apriori_overpen}
\|u-u_h\|_{1,\O}+\|\bsigma-\bsigma_h\|_{H(\divvr;\O)}
\le C\Big(h_{\max}^{s}\big(|u|_{1+s,\O}+|\bsigma|_{s,\O}+|\gradt\bsigma|_{s,\O}\big)
+\Big(\sum_{F\in\cE_D}h_F^{-1}\|\phi-P^{\alpha}_h\phi\|^{2}_{0,F}\Big)^{1/2}\Big),
\eeq
with $C$ independent of $\alpha$, $\beta$ and the mesh size.
\end{thm}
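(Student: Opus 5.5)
The plan follows the proof of Theorem~\ref{thm:apriori}, with the data replaced by their projections. By \eqref{eq:pythagoras_N} and \eqref{eq:pythagoras_D}, the minimizer $z_h:=(\bsigma_h,u_h)$ of $J_{\alpha,\beta}(\cdot;f_1,\bff_2,\phi,\psi)$ over $\Sigma_h\times V_h$ also minimizes $\tilde J(\cdot):=J_{\alpha,\beta}(\cdot;f_1,\bff_2,P^{\alpha}_h\phi,\Pi_k\psi)$, because the two functionals differ on $\Sigma_h\times V_h$ by a constant. As comparison function I take $z_h^{\ast}:=(\Pi^{RT}_h\bsigma,\tilde v_h)$. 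Here $\tilde v_h:=I_hu+L_h(P^{\alpha}_h\phi-\tr^{g}_{\G_D}(I_hu))\in V_h$, with $L_h$ the discrete lifting from the proof of Lemma~\ref{lem:inverseD}. Then $\tr^{g}_{\G_D}(\tilde v_h)=P^{\alpha}_h\phi$ exactly, and $\Pi^{RT}_h\bsigma\cdot\bn=\Pi_k\psi$ on $\G_N$ by \eqref{eq:RT_moment}. Hence both boundary terms of $\tilde J(z_h^{\ast})$ vanish for every $\alpha,\beta$, and only the volume residuals remain.

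Exactly as in \eqref{eq:apriori_step1}, minimality and the triangle inequality give $J_{\alpha,\beta}(z_h-z_h^{\ast};0)^{1/2}\le2\tilde J(z_h^{\ast})^{1/2}$. Proposition~\ref{rem:other_exponents} then applies with the constant $C_1$ of Theorem~\ref{thm:threshold}, which is independent of $\alpha,\beta$. It bounds $\|u_h-\tilde v_h\|_{1,\O}+\|\bsigma_h-\Pi^{RT}_h\bsigma\|_{H(\divvr;\O)}$ by $C\tilde J(z_h^{\ast})^{1/2}$. Since $(\bsigma,u)$ solves \eqref{fos}, the volume residuals of $z_h^{\ast}$ are at most $C(\|\bsigma-\Pi^{RT}_h\bsigma\|_{H(\divvr;\O)}+\|u-\tilde v_h\|_{1,\O})$. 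The result then follows from the triangle inequality, \eqref{eq:RT_approx}, and a bound on $\|u-\tilde v_h\|_{1,\O}$.

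The main obstacle is that bound on $\|u-\tilde v_h\|_{1,\O}$, which must be independent of $\alpha$. I write $\|u-\tilde v_h\|_{1,\O}\le\|u-I_hu\|_{1,\O}+\|L_h(P^{\alpha}_h\phi-I_hu)\|_{1,\O}$. The proof of Lemma~\ref{lem:inverseD} gives $\|L_hg\|_{1,\O}\le C(\sum_{F\in\cE_D}h_F^{-1}\|g\|^{2}_{0,F})^{1/2}$ for $g\in W^{D}_{h}$. Taking $g=P^{\alpha}_h\phi-I_hu$, I split $g=(P^{\alpha}_h\phi-\phi)+(\phi-I_hu)$ and use $\phi=\tr^{g}_{\G_D}(u)$. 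The second piece is bounded by $Ch_{\max}^{s}|u|_{1+s,\O}$ through \eqref{eq:SZ_approx}. The first piece is exactly the data term in \eqref{eq:apriori_overpen}, measured in the $h_F^{-1}$ weight rather than the $h_F^{-\alpha}$ weight. This is the reason the statement uses the weight $h_F^{-1}$ there: the lifting estimate only needs the $h_F^{-1}$-weighted norm, so no constant depends on $\alpha$. Thus no stability of $P^{\alpha}_h$ in another weight is required; one just has to check that every constant used (the lifting, the quasi-interpolation, $C_1$, and the continuity constant for the volume terms) is free of $\alpha$ and $\beta$.
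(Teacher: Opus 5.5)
Your proposal is correct and takes the same approach as the paper's proof. It uses the same comparison pair $\bigl(\Pi^{RT}_h\bsigma,\,I_hu+L_h(P^{\alpha}_h\phi-\tr^{g}_{\G_D}(I_hu))\bigr)$, whose boundary residuals vanish, the same appeal to Proposition~\ref{rem:other_exponents} with the $\alpha$-, $\beta$-independent constant $C_1$, and the same splitting of the lifted trace into the $h_F^{-1}$-weighted interpolation error and the data term.
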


\begin{proof}
By \eqref{eq:pythagoras_N} and \eqref{eq:pythagoras_D}, $z_h:=(\bsigma_h,u_h)$
minimizes $J_{\alpha,\beta}(\cdot):=J_{\alpha,\beta}(\cdot;f_1,\bff_2,P^{\alpha}_h\phi,\Pi_k\psi)$
over $\Sigma_h\times V_h$.  Let $L_h$ be the nodal lifting from the proof of
Lemma~\ref{lem:inverseD}, and put
$v_h^{\ast}:=I_hu+L_h\big(P^{\alpha}_h\phi-\tr^{g}_{\G_D}(I_hu)\big)$ and
$z_h^{\ast}:=(\Pi^{RT}_h\bsigma,v_h^{\ast})$.  Then
$\tr^{g}_{\G_D}(v_h^{\ast})=P^{\alpha}_h\phi$ and
$\Pi^{RT}_h\bsigma\cdot\bn=\Pi_k\psi$ on $\G_N$, so both boundary residuals of
$z_h^{\ast}$ vanish, whatever the weights.  As in the proof of
Theorem~\ref{thm:apriori}, Proposition~\ref{rem:other_exponents} gives
$\|u_h-v_h^{\ast}\|_{1,\O}+\|\bsigma_h-\Pi^{RT}_h\bsigma\|_{H(\divvr;\O)}
\le2\sqrt2\,C_1^{-1/2}J_{\alpha,\beta}(z_h^{\ast})^{1/2}$, and, since
$(\bsigma,u)$ solves \eqref{fos}, $J_{\alpha,\beta}(z_h^{\ast})^{1/2}$ is
bounded by
$C\big(\|\bsigma-\Pi^{RT}_h\bsigma\|_{H(\divvr;\O)}+\|u-v_h^{\ast}\|_{1,\O}\big)$.
The proof of Lemma~\ref{lem:inverseD} gives
$\|L_hg\|_{1,\O}\le C\big(\sum_{F\in\cE_D}h_F^{-1}\|g\|^{2}_{0,F}\big)^{1/2}$ for
$g\in W^{D}_{h}$, and $\phi=\tr^{g}_{\G_D}(u)$, so
$$
\|u-v_h^{\ast}\|_{1,\O}\le\|u-I_hu\|_{1,\O}
+C\Big(\sum_{F\in\cE_D}h_F^{-1}\|u-I_hu\|^{2}_{0,F}\Big)^{1/2}
+C\Big(\sum_{F\in\cE_D}h_F^{-1}\|\phi-P^{\alpha}_h\phi\|^{2}_{0,F}\Big)^{1/2}.
$$
The triangle inequality, \eqref{eq:RT_approx} and \eqref{eq:SZ_approx} give
\eqref{eq:apriori_overpen}.
\end{proof}

The weights enter neither the constant nor the interpolation errors in
\eqref{eq:apriori_overpen}: the comparison pair meets the projected data
exactly, and the penalty multiplies zero.  The last term of
\eqref{eq:apriori_overpen} is a data term.  It depends only on $\phi$, the
mesh and the projection $P^{\alpha}_h$ onto $W^{D}_{h}$, like the data term in
Theorem~\ref{thm:aposteriori}.  For $\alpha=1$,
$P^{1}_h=P_h$ is the best approximation in this weighted norm, the term is at
most $C h_{\max}^{s}|u|_{1+s,\O}$ by \eqref{eq:SZ_approx}, and
Theorem~\ref{thm:apriori} is recovered.  On $\G_N$ no data term arises,
because $\Pi_k\psi$ is independent of $\beta$.  As in
Remark~\ref{rem:zero_defect}, the estimate holds in the norm of
$H(\divvr;\O)\times H^{1}(\O)$; in $\tri\cdot\tri_{\alpha,\beta}$ the boundary
terms of the error are at least
$\sum_{F\in\cE_D}h_F^{-\alpha}\|\phi-P^{\alpha}_h\phi\|^{2}_{0,F}$ and
$\sum_{F\in\cE_N}h_F^{-\beta}\|\psi-\Pi_k\psi\|^{2}_{0,F}$, by
\eqref{eq:pythagoras_D} and \eqref{eq:pythagoras_N}.

\begin{rem}[minimum, over-penalization, and conditioning]\label{rem:three_roles}
Three questions that are often discussed together are logically distinct.
\emph{(i) Stability asks for a lower threshold.}  For the present boundary
residuals the sharp thresholds are
$h_F^{-1}$ on $\G_D$ and $1$ on $\G_N$.  \emph{(ii) Approximation asks whether
the penalized defect is actually being approximated.}  If a compatible comparison annihilates that defect, its weight is irrelevant:
``large weight times zero is zero.''  Theorem~\ref{rem:overpenalize} proves this
for all $\alpha\ge1$ and $\beta\ge0$; on $\G_D$ only the approximation of the
datum in $W^{D}_{h}$ remains.  The same mechanism is familiar in discontinuous
Galerkin methods.  In the weakly over-penalized symmetric interior-penalty
method of Brenner--Owens--Sung \cite{BOS:08}, the projected scalar jump is
penalized more strongly than in the standard SIP scaling, yet optimal energy
and $L^{2}$ convergence are retained.  \emph{(iii) Conditioning asks for the
algebraic price of that choice.}  A penalty may be harmless for convergence and still make the stiffness matrix
poorly scaled.
\end{rem}

\begin{rem}[large boundary weights beyond finite elements]\label{rem:nn_penalty}
The mechanism is not specific to finite elements.  On $\G_N$ the unweighted
$L^{2}$ penalty controls the natural norm of every square-integrable residual,
by \eqref{eq:embedN}, so the weight $1$ suffices for every trial class.  In
every least-squares minimization, a weight above the stability threshold can
be increased without changing the error bound whenever the trial class
contains an accurate comparison function that meets the boundary data, because
the penalty multiplies zero.  The boundary residuals of this paper can also be
used in the neural-network method FOSLS-deRhaNN \cite{Zhang:26}.
\end{rem}

\section{Extension to a fully discontinuous least-squares method}

\label{sec:dlsfem}

For a fully discontinuous least-squares discretization we prove the error
equivalence of Section~\ref{sec:functional} for every discrete pair, through
conforming companions and Theorem~\ref{thm:aposteriori}, and the norm
equivalence on the discrete spaces only.  The interior jump penalties, like the boundary
penalties of Section~\ref{sec:weights}, may be arbitrarily stronger than the
stability minimum without entering the a priori estimate, because a conforming
comparison pair has vanishing jumps.

The adjective ``fully discontinuous'' is important here.  The broken spaces
below have independent traces on the two sides of every interior face.
Nonconforming spaces such as the Crouzeix--Raviart space, in which a face
moment is already continuous, are a different case.

Under the regularity of Theorem~\ref{thm:apriori} with $0<s\le\frac12$, the
normal trace of the exact flux on $\partial K$ is in general not square
integrable, so a face-wise $L^2$ normal-jump functional is not defined on the full broken space
$\prod_K H(\divvr;K)$.  The method and its analysis use only jumps of discrete
functions.  The discontinuous functional is evaluated on the finite element spaces, where every normal trace
is a polynomial and hence belongs to $L^2(F)$, and at the difference of a
discrete pair and the exact solution.  At such a difference the interior jumps
are those of the discrete pair, because the exact solution has no jumps across
interior faces, and the boundary traces of the exact solution are the data; on
$\G_N$ its normal trace is $\psi\in L^{2}(\G_N)$.

\subsection{Broken finite element spaces and the discrete functional}
Let $\cE_I$ denote the set of interior faces of $\cT_h$.  For every
$F=\partial K^+\cap\partial K^-\in\cE_I$ fix once and for all a unit normal
$\bn_F$ pointing from $K^+$ to $K^-$.  For a broken scalar and flux define on $F$
$[v_h]:=v_h^+-v_h^-$ and $[\btau_h\cdot\bn_F]:=(\btau_h^+-\btau_h^-)\cdot\bn_F$.
Set
\begin{align*}
 \Sigma_h^{\rm b}
 &:=\{\btau_h\in L^2(\O)^d:\ \btau_h|_K\in\bRT_k(K)\ \forall K\in\cT_h\},\\
 V_h^{\rm b}
 &:=\{v_h\in L^2(\O):\ v_h|_K\in P_{k+1}(K)\ \forall K\in\cT_h\}.
\end{align*}
The conforming spaces of Section~\ref{sec:prelim} are subspaces,
$\Sigma_h\subset\Sigma_h^{\rm b}$ and $V_h\subset V_h^{\rm b}$.
We write $\nabla_h$ and $\divvr_h$ for the element-wise gradient and
divergence, and we define the broken norms
\[
 \|v\|_{1,h}^2:=\sum_{K\in\cT_h}\|v\|_{1,K}^2,
 \qquad
 \|\btau\|_{H(\divvr,h)}^2
 :=\sum_{K\in\cT_h}
 \big(\|\btau\|_{0,K}^2+\|\divvr\btau\|_{0,K}^2\big).
\]

For $(\btau_h,v_h)\in\Sigma_h^{\rm b}\times V_h^{\rm b}$ define
\begin{align}
\label{def:Jdisc}
J_{\rm dis}(\btau_h,v_h;f_1,\bff_2,\phi,\psi)
:={}&\|A^{-\half}(\btau_h+A\nabla_hv_h+\bb v_h+\bff_2)\|_{0,\O}^2
+\|\divvr_h\btau_h+\bbeta\cdot\nabla_hv_h+\mu v_h-f_1\|_{0,\O}^2
\nonumber\\
&+\sum_{F\in\cE_I}h_F^{-1}(\|[v_h]\|_{0,F}^2+\|[\btau_h\cdot\bn_F]\|_{0,F}^2)
+\sum_{F\in\cE_D}h_F^{-1}\|v_h-\phi\|_{0,F}^2
+\|\btau_h\cdot\bn-\psi\|_{0,\G_N}^2 .
\end{align}
All six terms are well defined because the arguments are piecewise polynomials
and the data satisfy Assumption~\ref{assum:data}.  As before,
$J_{\rm dis}(\btau_h,v_h;0)$ denotes the functional with homogeneous
data.

The corresponding discrete norm is
\beq\label{def:normdisc}
\tri(\btau_h,v_h)\tri_{{\rm d},h}^2
:=\|v_h\|_{1,h}^2+\|\btau_h\|_{H(\divvr,h)}^2
+\sum_{F\in\cE_I}h_F^{-1}(\|[v_h]\|_{0,F}^2+\|[\btau_h\cdot\bn_F]\|_{0,F}^2)
+\sum_{F\in\cE_D}h_F^{-1}\|v_h\|_{0,F}^2+\|\btau_h\cdot\bn\|_{0,\G_N}^2 .
\eeq

\subsection{Conforming companion operators}
We use the standard averaging operators \cite[Sect.~22.2]{EG:21}, which
average the degrees of freedom shared by several elements; a normal moment on a
boundary face belongs to one element and is left unchanged.

\begin{lem}[discrete conforming companions]\label{lem:disc_companion}
There are linear maps $\cC_h^u:V_h^{\rm b}\longrightarrow V_h$ and $\cC_h^\sigma:\Sigma_h^{\rm b}\longrightarrow\Sigma_h$ with constants depending only on shape regularity, $k$, and $d$, such that
\begin{align}
\label{eq:scalar_companion}
 &\|v_h-\cC_h^u v_h\|_{1,h}^2
 +\sum_{F\in\cE_D}h_F^{-1}
   \|v_h-\cC_h^u v_h\|_{0,F}^2
 \ \lesssim\
 \sum_{F\in\cE_I}h_F^{-1}\|[v_h]\|_{0,F}^2,\\
\label{eq:flux_companion}
 &\|\btau_h-\cC_h^\sigma\btau_h\|_{H(\divvr,h)}^2
 \ \lesssim\
 \sum_{F\in\cE_I}h_F^{-1}
 \|[\btau_h\cdot\bn_F]\|_{0,F}^2,
\end{align}
and the flux companion preserves the normal trace on boundary faces,
\beq\label{eq:flux_companion_boundary}
 (\cC_h^\sigma\btau_h)\cdot\bn
 =\btau_h\cdot\bn
 \qquad\text{on every }F\subset\partial\O.
\eeq
\end{lem}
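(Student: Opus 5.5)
We construct both companions as the standard averaging operators of \cite[Sect.~22.2]{EG:21}. The estimates then follow by the usual local scaling argument on the degrees of freedom.

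\emph{Scalar companion.} Use the Lagrange degrees of freedom of $P_{k+1}$. For every node $z$, set the value of $\cC_h^u v_h$ at $z$ to the average of $v_h|_K(z)$ over the elements $K$ containing $z$. This includes nodes on $\p\O$, where the average runs over the elements sharing that boundary node; the boundary datum is not imposed. Then $\cC_h^u v_h\in V_h$.

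On an element $K$, the difference $v_h-\cC_h^uv_h$ is a polynomial. Its nodal values are averages of differences $v_h|_K(z)-v_h|_{K'}(z)$. Each such difference is a telescoping sum of jumps across interior faces through $z$, which can be connected within the node patch by shape regularity. An inverse estimate gives $|[v_h](z)|^2\lesssim h_F^{1-d}\|[v_h]\|_{0,F}^2$. Scaling gives
\[
\|w\|_{0,K}^2+h_K^2|w|_{1,K}^2\lesssim h_K^d\sum_z|w(z)|^2 \qquad\text{for } w\in P_{k+1}(K).
\]
Combining these yields $\|v_h-\cC_h^uv_h\|_{1,K}^2\lesssim(1+h_K^{-2})h_K\sum_{F}h_F^{-1}\cdots$. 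The powers must be checked carefully: the $H^1$ seminorm part gives $h_K^{d-2}h_F^{1-d}\|[v_h]\|^2_{0,F}\simeq h_F^{-1}\|[v_h]\|_{0,F}^2$, and the $L^2$ part is smaller since $h\le1$.

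For the boundary term on $F\in\cE_D$, use the trace scaling $h_F^{-1}\|w\|_{0,F}^2\lesssim h_F^{d-2}\sum_{z\in F}|w(z)|^2$. This is bounded by the same jump sums. Summing over elements and faces with finite overlap of node patches gives \eqref{eq:scalar_companion}.

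\emph{Flux companion.} Use the $\bRT_k$ degrees of freedom: the normal moments against $P_k(F)$ on faces, and the interior moments against $P_{k-1}(K)^d$. On an interior face $F$, set the normal moment of $\cC_h^\sigma\btau_h$ to the average of the moments from $K^\pm$ with respect to $\bn_F$. Interior moments and boundary face moments are copied unchanged. This makes $\cC_h^\sigma\btau_h\in\Sigma_h$.

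On a boundary face the normal moment is copied, and both traces lie in $P_k(F)$. Hence they are equal, which gives \eqref{eq:flux_companion_boundary}.

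On $K$, the difference $\btau_h-\cC_h^\sigma\btau_h$ has only face moments that are nonzero, equal to $\pm\frac12$ of the moments of $[\btau_h\cdot\bn_F]$. Write the difference as a combination of the dual basis. Piola scaling gives $\|\bchi\|_{0,K}^2\lesssim h_K\sum_F\|\bchi\cdot\bn\|_{0,F}^2$ for such $\bchi$, and the inverse estimate gives $\|\divvr\bchi\|_{0,K}\lesssim h_K^{-1}\|\bchi\|_{0,K}$. Together these yield $\|\bchi\|^2_{H(\divvr;K)}\lesssim h_K^{-1}\sum_F\|[\btau_h\cdot\bn_F]\|^2_{0,F}$. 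Summation gives \eqref{eq:flux_companion}.

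\emph{Main obstacle.} The delicate step is the scalar nodal estimate at vertices and edges, where several elements meet. There one must argue that any two elements in a node patch are connected by a chain of elements sharing faces that contain $z$, with chain length bounded by shape regularity. For conforming simplicial meshes of a Lipschitz polyhedron this holds, since the star of a node is face-connected. A related subtlety is boundary nodes on $\overline{\G_D}\cap\overline{\G_N}$, where only interior jumps are available. The chain argument still works there, because the average uses only element values and never boundary data.
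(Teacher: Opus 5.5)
Your proposal is correct and follows the paper's route. You use the same averaging operators of \cite[Sect.~22.2]{EG:21}: boundary nodes are averaged over elements only, and boundary normal moments are copied, so \eqref{eq:flux_companion_boundary} is immediate. The only difference is that you derive the local scaling bounds by hand, whereas the paper cites \cite[Lem.~22.3]{EG:21} for the volume estimates and obtains the Dirichlet term from the $L^{2}(K_F)$ bound plus a discrete trace inequality. Your intermediate factor $(1+h_K^{-2})h_K\sum_F h_F^{-1}$ carries a spurious $h_F^{-1}$, but the power count you give right after it is correct.
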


\begin{proof}
The volume estimates are \cite[Lem.~22.3]{EG:21}.  The boundary term in
\eqref{eq:scalar_companion} follows from the $L^{2}(K_F)$ estimate there and
the discrete trace inequality on $K_F$, and
\eqref{eq:flux_companion_boundary} holds because the boundary normal moments
are not averaged.
\end{proof}

\subsection{Norm equivalence}

\begin{thm}[discrete norm equivalence]\label{thm:disc_normequiv}
Let the uniqueness assumption, Assumption~\ref{assum:infsup}, hold.  Then
\beq\label{eq:disc_normequiv}
 C_{\rm d}\tri(\btau_h,v_h)\tri_{{\rm d},h}^2
 \ \le\ J_{\rm dis}(\btau_h,v_h;0)
 \ \le\ C_{\rm d}'\tri(\btau_h,v_h)\tri_{{\rm d},h}^2
\eeq
for every $(\btau_h,v_h)\in\Sigma_h^{\rm b}\times V_h^{\rm b}$.
The constants are independent of the mesh size.  Consequently
\eqref{def:Jdisc} has a unique minimizer for all data satisfying
Assumption~\ref{assum:data}.
\end{thm}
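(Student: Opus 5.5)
The upper bound is immediate, as in Lemma~\ref{lem:continuity}: the two volume residuals are bounded by the triangle inequality and the coefficient bounds in terms of $\|v_h\|_{1,h}$ and $\|\btau_h\|_{H(\divvr,h)}$, and the jump and boundary terms of $J_{\rm dis}(\cdot;0)$ and $\tri\cdot\tri_{{\rm d},h}$ coincide. For the lower bound, the jump and boundary terms of the norm are already terms of $J_{\rm dis}(\btau_h,v_h;0)$. It therefore remains to bound $\|v_h\|_{1,h}+\|\btau_h\|_{H(\divvr,h)}$ by $CJ_{\rm dis}(\btau_h,v_h;0)^{1/2}$. I will do this through the conforming companions of Lemma~\ref{lem:disc_companion} and the conforming norm equivalence, Theorem~\ref{thm:threshold}.

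Set $\btau_c:=\cC_h^\sigma\btau_h\in\Sigma_h$ and $v_c:=\cC_h^uv_h\in V_h$, so that $(\btau_c,v_c)\in\Sigma_h\times V_h\subset\Sigma\times V^{D}_{\ast}$. Theorem~\ref{thm:threshold} then gives $\|v_c\|_{1,\O}+\|\btau_c\|_{H(\divvr;\O)}\le C_1^{-1/2}J(\btau_c,v_c;0)^{1/2}$. Next I compare $J(\btau_c,v_c;0)$ with $J_{\rm dis}(\btau_h,v_h;0)$ term by term.

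The volume residuals of $(\btau_c,v_c)$ differ from the broken ones of $(\btau_h,v_h)$ by residuals of the differences $(\btau_c-\btau_h,v_c-v_h)$. By the coefficient bounds, these are at most $C(\|v_h-v_c\|_{1,h}+\|\btau_h-\btau_c\|_{H(\divvr,h)})$. By \eqref{eq:scalar_companion} and \eqref{eq:flux_companion}, this is bounded by the interior jump terms of $J_{\rm dis}$. For the Dirichlet term,
$$
\sum_{F\in\cE_D}h_F^{-1}\|v_c\|_{0,F}^2\le2\sum_{F\in\cE_D}h_F^{-1}\|v_h\|_{0,F}^2+2\sum_{F\in\cE_D}h_F^{-1}\|v_h-v_c\|_{0,F}^2,
$$
and the last sum is again controlled by \eqref{eq:scalar_companion}. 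The Neumann term is unchanged by \eqref{eq:flux_companion_boundary}. Hence $J(\btau_c,v_c;0)\lesssim J_{\rm dis}(\btau_h,v_h;0)$. The triangle inequality, together with \eqref{eq:scalar_companion} and \eqref{eq:flux_companion} once more, then bounds $\|v_h\|_{1,h}+\|\btau_h\|_{H(\divvr,h)}$. All constants come from Theorem~\ref{thm:threshold} and Lemma~\ref{lem:disc_companion}, so they are mesh independent.

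The step needing the most care is the comparison of the volume residuals. There one must check that the broken operators $\nabla_h$ and $\divvr_h$ applied to the difference are exactly what \eqref{eq:scalar_companion} and \eqref{eq:flux_companion} control, which holds since the broken norms include element-wise gradients and divergences. One must also check that the companions preserve the boundary data needed, which \eqref{eq:flux_companion_boundary} supplies on $\G_N$ and the boundary term of \eqref{eq:scalar_companion} supplies on $\G_D$.

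Finally, the lower bound makes the symmetric bilinear form of the normal equations of \eqref{def:Jdisc} coercive on the finite-dimensional space $\Sigma_h^{\rm b}\times V_h^{\rm b}$. Since every term of $J_{\rm dis}$ is finite under Assumption~\ref{assum:data}, the functional is a strictly convex quadratic, and it has a unique minimizer.
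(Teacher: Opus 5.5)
Your proposal is correct and follows essentially the same route as the paper. Like the paper, you pass to the conforming companions, compare term by term to get $J(\cC_h^\sigma\btau_h,\cC_h^uv_h;0)\lesssim J_{\rm dis}(\btau_h,v_h;0)$ via Lemma~\ref{lem:disc_companion} (the Neumann terms coincide by \eqref{eq:flux_companion_boundary}, the Dirichlet terms are compared via the boundary term of \eqref{eq:scalar_companion}), apply Theorem~\ref{thm:threshold} to the companions, return to the broken pair by the triangle inequality, and obtain the unique minimizer from positive definiteness.
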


\begin{proof}
The upper bound follows from the triangle inequality, since the jump and
boundary terms of \eqref{def:Jdisc} and \eqref{def:normdisc} coincide.  For the
lower bound let $v_h^c:=\cC_h^u v_h\in V_h$ and
$\btau_h^c:=\cC_h^\sigma\btau_h\in\Sigma_h$.  The volume residuals of
$(\btau_h^c,v_h^c)$ and $(\btau_h,v_h)$ differ by the companion errors, the
Neumann terms coincide by \eqref{eq:flux_companion_boundary}, and the Dirichlet
terms differ by the boundary term of \eqref{eq:scalar_companion}, so
Lemma~\ref{lem:disc_companion} gives
$J(\btau_h^c,v_h^c;0)\lesssim J_{\rm dis}(\btau_h,v_h;0)$.
Theorem~\ref{thm:threshold} bounds $\|v_h^c\|_{1,\O}$ and
$\|\btau_h^c\|_{H(\divvr;\O)}$, Lemma~\ref{lem:disc_companion} the broken
volume norms of $(\btau_h,v_h)$, and the jump and boundary terms of
\eqref{def:normdisc} are terms of \eqref{def:Jdisc}.  Positive definiteness
gives the unique minimizer.
\end{proof}

The weight $h_F^{-1}$ on both interior jumps is also necessary for the norm
equivalence on the unconstrained broken spaces.

\begin{prop}[sharpness of the jump weights]\label{prop:disc_sharp}
Let $\O=(0,1)^{d}$, let $\G_D$ be the union of the interiors of the faces
$x_1=0$ and $x_1=1$, let $A=I$, $\bb=\bbeta=\bzero$ and $\mu=0$, and let
$k\ge0$.  Let the meshes be quasi-uniform, resolve the planes $x_1=i/M$,
$i=1,\dots,M-1$, and satisfy $h_{\max}\simeq M^{-1}$.
Replace the interior weights in \eqref{def:Jdisc} by constants $\omega_u,\omega_\sigma>0$
on the scalar and the flux jumps.  If
\beq\label{eq:disc_sharp}
\|v_h\|_{1,h}^{2}+\|\btau_h\|_{H(\divvr,h)}^{2}\le C\,J_{\rm dis}(\btau_h,v_h;0)
\qquad\forall(\btau_h,v_h)\in\Sigma_h^{\rm b}\times V_h^{\rm b}
\eeq
with $C$ independent of the mesh, then $\omega_\sigma\ge c\,h_{\max}^{-1}$ and
$\omega_u\ge c\,h_{\max}^{-1}$ with $c>0$ independent of the mesh.
\end{prop}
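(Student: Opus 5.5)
The proof tests \eqref{eq:disc_sharp} on two explicit discrete pairs, each built from samples of $\sin(\pi x_1)$ across the slabs $S_j:=\{j/M<x_1<(j+1)/M\}$. In each pair all volume residuals and all but one type of penalty vanish. The surviving interior jumps are $O(h_{\max})$ across the $M-1$ planes, so they contribute only $O(\omega h_{\max})$, while the broken norm stays of order one. Since the mesh resolves the planes $x_1=i/M$, every element lies in one slab. Every function below that depends on $x_1$ alone and is polynomial on each slab is therefore an admissible piecewise polynomial. Throughout, $A=I$ and $\bb=\bbeta=\bzero$, $\mu=0$, so $R_1=\btau_h+\nabla_hv_h$ and $R_2=\divvr_h\btau_h$.

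\emph{Flux jump weight.} Let $v_h$ be the continuous piecewise linear interpolant in $x_1$ of $\sin(\pi x_1)$ at the nodes $x_1=i/M$. It lies in $V_h\subset V_h^{\rm b}$ because $P_1\subset P_{k+1}$. Let $\btau_h:=-\nabla_hv_h=-s_j\,\mathbf e_1$ on $S_j$, where $s_j$ is the slope of $v_h$ on $S_j$. This $\btau_h$ is piecewise constant, hence in $\Sigma_h^{\rm b}$ since $P_0^{d}\subset\bRT_k$. Then:
\begin{itemize}
\item $R_1=0$ and $R_2=0$, since $\btau_h$ is elementwise constant;
\item $[v_h]=0$, and $v_h=0$ at $x_1=0,1$, so the scalar jump and Dirichlet terms vanish;
\item on the lateral faces, which form $\G_N$, $\btau_h\cdot\bn=0$ because $\btau_h\parallel\mathbf e_1$;
\item the flux jumps vanish on faces inside a slab and on faces transversal to the planes, and on the plane $x_1=i/M$ they equal $s_i-s_{i-1}=O(h_{\max})$, since the slopes of the interpolant of a smooth function change by $O(M^{-1})$ from one slab to the next.
\end{itemize}
Hence $J_{\rm dis}(\btau_h,v_h;0)\lesssim\omega_\sigma(M-1)h_{\max}^{2}\simeq\omega_\sigma h_{\max}$. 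On the other hand, $\|\btau_h\|_{0,\O}^{2}=\|\p_{x_1}v_h\|^{2}_{0,\O}\to\|\pi\cos(\pi x_1)\|_{0,\O}^{2}>0$, so the left-hand side of \eqref{eq:disc_sharp} is bounded below by a positive constant for $h_{\max}$ small. Therefore \eqref{eq:disc_sharp} forces $1\lesssim C\omega_\sigma h_{\max}$, that is, $\omega_\sigma\ge c\,h_{\max}^{-1}$.

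\emph{Scalar jump weight.} Take $\btau_h=\bzero$ and $v_h:=\sin(\pi x_j^{\rm m})$ on $S_j$, where $x_j^{\rm m}:=(j+\tfrac12)/M$ is the midpoint. Then $\nabla_hv_h=0$, so $R_1=R_2=0$; the flux jumps vanish and the Neumann term vanishes. The scalar jumps on the planes are $O(h_{\max})$, contributing $\lesssim\omega_u h_{\max}$. On the Dirichlet faces $x_1=0,1$ we have $|v_h|=|\sin(\pi/(2M))|\lesssim h_{\max}$, contributing $\lesssim h_{\max}^{-1}h_{\max}^{2}=h_{\max}$. Meanwhile $\|v_h\|^{2}_{0,\O}\to\tfrac12$. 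Thus $1\lesssim C(\omega_u+1)h_{\max}$, which for $h_{\max}$ small gives $\omega_u\ge c\,h_{\max}^{-1}$.

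The only step needing care is the bookkeeping of the jump sums. One checks that jumps occur only on the planes, that each plane has total face measure one, and that the second differences of the samples of $\sin(\pi x_1)$ are $O(M^{-1})$ uniformly, using quasi-uniformity and $h_{\max}\simeq M^{-1}$. The boundary conditions are satisfied by construction.
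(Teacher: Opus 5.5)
Your proof is correct and follows the paper's argument. The flux example is the paper's construction with $\sin(\pi x_1)$ in place of a quadratic: the paper's $w$ is exactly $M$ times the interpolant of $\tfrac12x_1(1-x_1)$, which makes every plane jump equal to one and gives the explicit bound $\omega_\sigma\ge(M+1)/(12C)$. For the scalar jump you sample a function of $x_1$ slab-wise, so the jumps live only on the $M-1$ planes and a harmless Dirichlet residual of order $h_{\max}$ appears. The paper instead samples $g\in C_c^\infty(\O)$ at barycenters, so the boundary residuals vanish, and the bound $\lesssim h_{\max}^{-1}$ on the total interior face area replaces your slab bookkeeping.
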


\begin{proof}
\emph{The flux jump.}  Let $w$ be the continuous piecewise linear function on
$[0,1]$ with $w(0)=w(1)=0$ and slope $m_j:=\frac{M-1}{2}-j$ on
$(j/M,(j+1)/M)$, $j=0,\dots,M-1$; the slopes have mean zero, so $w(1)=0$
indeed.  Put $v_h(x):=w(x_1)$ and $\btau_h:=-\nabla v_h$.  The meshes resolve
the planes, so $v_h$ is affine on every element and $v_h\in V_h$, while
$\btau_h$ is constant on every element and $\btau_h\in\Sigma_h^{\rm b}$.  Both
volume residuals of $(\btau_h,v_h)$ vanish, $v_h$ has no jumps and vanishes on
$\G_D$, and $\btau_h\cdot\bn=0$ on $\G_N$ because $\btau_h$ is parallel to the
$x_1$-axis.  The normal jump of $\btau_h$ vanishes inside the slabs and has
modulus $|m_{i-1}-m_i|=1$ on the planes, each of area one.  Hence
$J_{\rm dis}(\btau_h,v_h;0)=\omega_\sigma(M-1)$, whereas
$\|\nabla v_h\|_{0,\O}^{2}=\frac1M\sum_{j=0}^{M-1}m_j^{2}=\frac{M^{2}-1}{12}$.
Inserting $(\btau_h,v_h)$ into \eqref{eq:disc_sharp} gives
$\omega_\sigma\ge(M+1)/(12C)$.

\emph{The scalar jump.}  Fix $g\in C_c^\infty(\O)$ with $g\ne0$, put
$\btau_h:=\bzero$, and let $v_h$ be the element-wise constant function with
value $g$ at the barycenter of each element.  Since the support of $g$ has
positive distance to $\p\O$, for $h_{\max}$ small $v_h$
vanishes on the elements touching $\p\O$, so the boundary residuals vanish, and
both volume residuals vanish because $\nabla_hv_h=\bzero$.  On every interior
face $|[v_h]|\le 2h_{\max}\|\nabla g\|_{L^{\infty}(\O)}$, since the barycenters of
two neighboring elements are at distance at most $2h_{\max}$, and the total area
of the interior faces of a quasi-uniform mesh is at most $Ch_{\max}^{-1}$, so
$J_{\rm dis}(\bzero,v_h;0)=\omega_u\sum_{F\in\cE_I}\|[v_h]\|_{0,F}^{2}
\le C\omega_u h_{\max}\|\nabla g\|^{2}_{L^{\infty}(\O)}$, whereas
$\|v_h\|_{0,\O}\to\|g\|_{0,\O}>0$.  Inserting $(\bzero,v_h)$ into
\eqref{eq:disc_sharp} gives $\omega_u\ge c\,h_{\max}^{-1}$.
\end{proof}

Both examples rest on the same fact: the interior faces fill the domain, and
on quasi-uniform meshes their total area is of order $h_{\max}^{-1}$.  A flux
jump acts on a test function $v$ through
$$
\sum_{F\in\cE_I}([\btau_h\cdot\bn_F],v)_{0,F}
\le\Big(\sum_{F\in\cE_I}h_F^{-1}\|[\btau_h\cdot\bn_F]\|^{2}_{0,F}\Big)^{1/2}
\Big(\sum_{F\in\cE_I}h_F\|v\|^{2}_{0,F}\Big)^{1/2},
$$
and the element-wise trace inequality gives
$\sum_{F\in\cE_I}h_F\|v\|^{2}_{0,F}\le C\big(\|v\|^{2}_{0,\O}+h_{\max}^{2}|v|^{2}_{1,h}\big)$;
the weight $h_F^{-1}$ compensates exactly the factor $h_F$ that the trace
inequality on all faces requires.  On the fixed surface $\G_N$ the trace inequality has a
constant independent of the mesh, and the weight $1$ suffices.

\subsection{Error equivalence}
The error equivalence is derived from Theorem~\ref{thm:aposteriori} through
the conforming companions: the jumps compare a discrete pair with its
companions, and Theorem~\ref{thm:aposteriori} compares the companions with the
exact solution.  The argument is that of \cite[Thm.~7.3]{LiangZhang:23} for
the least-squares method with lowest-order Raviart--Thomas fluxes and
Crouzeix--Raviart scalars; \cite[Thm.~12]{Bri:26} follows it for the
over-penalized discontinuous LSFEM for the Poisson problem with homogeneous
boundary conditions.  The error is measured by broken volume norms together
with the jumps of the discrete pair and the two boundary residuals.

\begin{thm}[error equivalence in the broken norm]\label{thm:disc_apost}
Let the uniqueness assumption, Assumption~\ref{assum:infsup}, hold, let the
data satisfy Assumption~\ref{assum:data}, let $(\bsigma,u)$ be the solution of
\eqref{fos} and $Q_h$ a projection satisfying \eqref{def:Qh}, let
$(\btau_h,v_h)\in\Sigma_h^{\rm b}\times V_h^{\rm b}$, and put
$\eta_{\rm d}^2:=J_{\rm dis}(\btau_h,v_h;f_1,\bff_2,\phi,\psi)$.  Then
\begin{align}
\label{eq:disc_apost}
c\,[\eta_{\rm d}+\|\phi-Q_h\phi\|_{1/2,\G_D}]
\le{}&\Big(\|u-v_h\|_{1,h}^2+\|\bsigma-\btau_h\|_{H(\divvr,h)}^2
+\sum_{F\in\cE_I}h_F^{-1}(\|[v_h]\|_{0,F}^2+\|[\btau_h\cdot\bn_F]\|_{0,F}^2)\\
\nonumber
&\ +\sum_{F\in\cE_D}h_F^{-1}\|\phi-v_h\|_{0,F}^2+\|\psi-\btau_h\cdot\bn\|_{0,\G_N}^2\Big)^{1/2}
\le C\,[\eta_{\rm d}+\|\phi-Q_h\phi\|_{1/2,\G_D}],
\end{align}
with constants $c,C>0$ depending on $C_Q$ but independent of the mesh size.
\end{thm}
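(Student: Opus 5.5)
The plan is to route everything through the conforming companions $\btau_h^c:=\cC_h^\sigma\btau_h\in\Sigma_h$ and $v_h^c:=\cC_h^u v_h\in V_h$ of Lemma~\ref{lem:disc_companion}, and to apply Theorem~\ref{thm:aposteriori} to the pair $(\btau_h^c,v_h^c)\in\Sigma\times V^{D}_{\ast}$.  Denote by $E$ the middle quantity in \eqref{eq:disc_apost}, and write
$$
\delta^2:=\sum_{F\in\cE_I}h_F^{-1}\big(\|[v_h]\|_{0,F}^2+\|[\btau_h\cdot\bn_F]\|_{0,F}^2\big).
$$
The key auxiliary estimate compares the two functionals:
$$
\big|J_{\rm dis}(\btau_h,v_h;f_1,\bff_2,\phi,\psi)^{1/2}-J(\btau_h^c,v_h^c;f_1,\bff_2,\phi,\psi)^{1/2}\big|\lesssim\delta .
$$
To prove it, view both square roots as Euclidean-type norms of residual vectors, with the jump entries of the conforming pair equal to zero.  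The volume residuals differ by the companion errors, which are bounded in the broken $H(\divvr)\times H^1$ norm by $\delta$ through \eqref{eq:scalar_companion} and \eqref{eq:flux_companion}.  The Dirichlet residuals $v_h-\phi$ and $v_h^c-\phi$ differ by $v_h-v_h^c$, whose weighted $L^2(\G_D)$ norm is again bounded by the jumps via \eqref{eq:scalar_companion}.  The Neumann residuals coincide by \eqref{eq:flux_companion_boundary}, and the jump terms of $J_{\rm dis}$ are themselves $\delta^2$.  The reverse triangle inequality in this product norm then gives the claim.  Note that in particular $\delta\le\eta_{\rm d}$.

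\emph{Upper bound.}  Split $u-v_h=(u-v_h^c)+(v_h^c-v_h)$ and $\bsigma-\btau_h=(\bsigma-\btau_h^c)+(\btau_h^c-\btau_h)$ elementwise.  The conforming parts satisfy $\|u-v_h^c\|_{1,\O}+\|\bsigma-\btau_h^c\|_{H(\divvr;\O)}\le\tri(\bsigma-\btau_h^c,u-v_h^c)\tri$, which by Theorem~\ref{thm:aposteriori} is at most $C[J(\btau_h^c,v_h^c)^{1/2}+\|\phi-Q_h\phi\|_{1/2,\G_D}]$.  By the auxiliary estimate and $\delta\le\eta_{\rm d}$, this is $\lesssim\eta_{\rm d}+\|\phi-Q_h\phi\|_{1/2,\G_D}$.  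The companion parts are bounded by $\delta\le\eta_{\rm d}$ by Lemma~\ref{lem:disc_companion}.  The jump terms, and the Dirichlet and Neumann residuals in $E$, are literally terms of $\eta_{\rm d}^2$.

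\emph{Lower bound.}  Using the auxiliary estimate and Theorem~\ref{thm:aposteriori} once more,
$$
\eta_{\rm d}+\|\phi-Q_h\phi\|_{1/2,\G_D}\lesssim J(\btau_h^c,v_h^c)^{1/2}+\|\phi-Q_h\phi\|_{1/2,\G_D}+\delta\lesssim\tri(\bsigma-\btau_h^c,u-v_h^c)\tri+\delta .
$$
Since $\delta\le E$, it remains to bound $\tri(\bsigma-\btau_h^c,u-v_h^c)\tri$ by $E$.  Its volume part is at most the broken error plus the companion errors, which is $\lesssim E$.  Its Dirichlet part is $\big(\sum_{F\in\cE_D}h_F^{-1}\|\phi-v_h^c\|_{0,F}^2\big)^{1/2}\le\big(\sum_{F\in\cE_D}h_F^{-1}\|\phi-v_h\|_{0,F}^2\big)^{1/2}+C\delta$.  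Its Neumann part equals $\|\psi-\btau_h\cdot\bn\|_{0,\G_N}$ by \eqref{eq:flux_companion_boundary}.

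\emph{Main obstacle.}  The delicate point is that no functional or norm with interior $L^2$ normal jumps may be evaluated at the exact flux when $s\le\frac12$.  The plan respects this by only using quantities that are well defined: the jumps of the discrete pair, the conforming functional $J$ on $\Sigma\times V^{D}_{\ast}$, and boundary traces, where the exact data $\phi$ and $\psi\in L^2(\G_N)$ are given.  In particular, the exact solution enters only through Theorem~\ref{thm:aposteriori} applied to the conforming companions, never through the broken functional.  I would double-check that Theorem~\ref{thm:aposteriori} indeed requires only $\btau_h^c\in\Sigma$ (true, since its normal trace on $\G_N$ is polynomial) and $\tr^{g}_{\G_D}(v_h^c)\in W^{D}_{h}$ (true, since $v_h^c\in V_h$).
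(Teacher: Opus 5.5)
Your proposal is correct and is essentially the paper's proof; the upper bound coincides with it (conforming companions, Theorem~\ref{thm:aposteriori} applied to $(\btau_h^c,v_h^c)\in\Sigma_h\times V_h$, and $J(\btau_h^c,v_h^c;f_1,\bff_2,\phi,\psi)^{1/2}\lesssim\eta_{\rm d}$ as in Theorem~\ref{thm:disc_normequiv}). The lower bound differs slightly: the paper bounds $\eta_{\rm d}$ directly by the middle term, since $(\bsigma,u)$ satisfies \eqref{fos} elementwise, and handles the data term with Lemma~\ref{lem:oscD} and $w_h:=\tr^{g}_{\G_D}(v_h^c)$; your route through the reverse direction of the functional comparison and the lower bound of Theorem~\ref{thm:aposteriori} is equally valid.
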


\begin{proof}
Let $v_h^c:=\cC_h^uv_h$ and $\btau_h^c:=\cC_h^\sigma\btau_h$.  Since
$(\bsigma,u)$ solves \eqref{fos}, the volume residuals in $\eta_{\rm d}^2$ are
bounded by the broken volume norms of the error, and the other terms of
$\eta_{\rm d}^2$ are terms of the middle term of \eqref{eq:disc_apost}; this
bounds $\eta_{\rm d}$.  For the data term take $w_h:=\tr^{g}_{\G_D}(v_h^c)$ in
Lemma~\ref{lem:oscD}: by \eqref{eq:quotient_lift},
$\|\phi-w_h\|_{1/2,\G_D}\le\|u-v_h\|_{1,h}+\|v_h-v_h^c\|_{1,h}$, the weighted
$L^{2}$ norm of $\phi-w_h$ is at most the Dirichlet term of the middle term
plus the boundary term of \eqref{eq:scalar_companion}, and
Lemma~\ref{lem:disc_companion} bounds the companion terms by the scalar jump
term.  This gives the lower bound.  For the upper bound, the jump and boundary
terms of the middle term are terms of $\eta_{\rm d}^2$, and
$\|u-v_h\|_{1,h}+\|\bsigma-\btau_h\|_{H(\divvr,h)}$ is at most
$\|u-v_h^c\|_{1,\O}+\|\bsigma-\btau_h^c\|_{H(\divvr;\O)}$ plus companion terms,
which are at most $C\eta_{\rm d}$ by Lemma~\ref{lem:disc_companion}.
Theorem~\ref{thm:aposteriori}, applied to
$(\btau_h^c,v_h^c)\in\Sigma_h\times V_h$, bounds the first two by
$C\big(J(\btau_h^c,v_h^c;f_1,\bff_2,\phi,\psi)^{1/2}+\|\phi-Q_h\phi\|_{1/2,\G_D}\big)$,
and $J(\btau_h^c,v_h^c;f_1,\bff_2,\phi,\psi)^{1/2}\le C\eta_{\rm d}$ as in the
proof of Theorem~\ref{thm:disc_normequiv}.
\end{proof}

\subsection{A priori error estimate}
Let $(\bsigma_h^{\rm d},u_h^{\rm d})\in\Sigma_h^{\rm b}\times V_h^{\rm b}$
be the minimizer of \eqref{def:Jdisc} with the exact data.  Since every boundary
normal trace of $\Sigma_h^{\rm b}$ belongs to $P_k(F)$, Lemma~\ref{lem:pythagoras}
applies verbatim: replacing $\psi$ by $\Pi_k\psi$ changes the value of the
functional only by a constant, so the minimizer is the same.

\begin{thm}[a priori estimate in the broken norm]
\label{thm:disc_apriori}
Let the uniqueness assumption, Assumption~\ref{assum:infsup}, hold, let the
data satisfy Assumption~\ref{assum:data}, let $(\bsigma,u)$ be the solution of
\eqref{fos}, and assume the regularity of Theorem~\ref{thm:apriori} for some
$0<s\le k+1$.  Then
\beq\label{eq:disc_apriori}
 \|u-u_h^{\rm d}\|_{1,h}
 +\|\bsigma-\bsigma_h^{\rm d}\|_{H(\divvr,h)}
  \le C\,h_{\max}^{s}
 \big(|u|_{1+s,\O}+|\bsigma|_{s,\O}+|\gradt\bsigma|_{s,\O}\big),
\eeq
with $C$ independent of the mesh size.
\end{thm}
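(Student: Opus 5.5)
The plan is to combine the discrete norm equivalence, Theorem~\ref{thm:disc_normequiv}, with a conforming comparison pair, in the same way as the proof of Theorem~\ref{thm:apriori}.

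First, by the remark preceding the statement, $(\bsigma_h^{\rm d},u_h^{\rm d})$ also minimizes $J_{\rm dis}(\cdot):=J_{\rm dis}(\cdot;f_1,\bff_2,\phi,\Pi_k\psi)$ over $\Sigma_h^{\rm b}\times V_h^{\rm b}$. As comparison pair take $z_h^{\ast}:=(\Pi^{RT}_h\bsigma,I_hu)\in\Sigma_h\times V_h\subset\Sigma_h^{\rm b}\times V_h^{\rm b}$. Its interior jumps vanish, because it is conforming. Its Neumann residual $\Pi^{RT}_h\bsigma\cdot\bn-\Pi_k\psi$ vanishes by \eqref{eq:RT_moment}. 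Hence $J_{\rm dis}(z_h^{\ast})=J(z_h^{\ast};f_1,\bff_2,\phi,\Pi_k\psi)$, and \eqref{eq:apriori_step2} bounds this by $Ch_{\max}^{2s}(|u|_{1+s,\O}+|\bsigma|_{s,\O}+|\gradt\bsigma|_{s,\O})^{2}$. I expect no obstacle here even for $s\le\frac12$. The jumps and normal traces that enter are those of discrete functions only, and on $\G_N$ the exact trace is $\psi\in L^{2}(\G_N)$, so every term is well defined, as stressed at the beginning of Section~\ref{sec:dlsfem}.

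Next, since $J_{\rm dis}(\cdot;0)$ is the square of a seminorm and the residuals are affine, minimality gives
\[
J_{\rm dis}(z_h^{\rm d}-z_h^{\ast};0)^{1/2}\le J_{\rm dis}(z_h^{\rm d})^{1/2}+J_{\rm dis}(z_h^{\ast})^{1/2}\le 2J_{\rm dis}(z_h^{\ast})^{1/2},
\]
where $z_h^{\rm d}:=(\bsigma_h^{\rm d},u_h^{\rm d})$. The difference $z_h^{\rm d}-z_h^{\ast}$ lies in $\Sigma_h^{\rm b}\times V_h^{\rm b}$, so the lower bound of \eqref{eq:disc_normequiv} gives
\[
\|u_h^{\rm d}-I_hu\|_{1,h}+\|\bsigma_h^{\rm d}-\Pi^{RT}_h\bsigma\|_{H(\divvr,h)}\le C\,C_{\rm d}^{-1/2}J_{\rm dis}(z_h^{\ast})^{1/2}.
\]

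Finally, apply the triangle inequality. The broken norms of $u-I_hu$ and $\bsigma-\Pi^{RT}_h\bsigma$ equal the conforming ones, and these are bounded by \eqref{eq:RT_approx} and \eqref{eq:SZ_approx}, which gives \eqref{eq:disc_apriori}.

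The only point needing care is the identification of $J_{\rm dis}$ with $J$ at the conforming pair $z_h^{\ast}$, which rests on its vanishing jumps. An alternative route would be the error equivalence, Theorem~\ref{thm:disc_apost}, applied to the minimizer. It would give the same bound in the larger norm, but only after adding the data term $\|\phi-Q_h\phi\|_{1/2,\G_D}$ and controlling $\eta_{\rm d}\le J_{\rm dis}(z_h^{\ast};f_1,\bff_2,\phi,\psi)^{1/2}$. That route forces the full Neumann term $\|\psi-\Pi_k\psi\|_{0,\G_N}$ to appear, which by Remark~\ref{rem:zero_defect} is suboptimal. So I would use the direct norm-equivalence argument above.
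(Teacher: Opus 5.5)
Your proposal is correct and follows the paper's proof essentially step for step. Both arguments use minimality of $(\bsigma_h^{\rm d},u_h^{\rm d})$ for the projected datum $\Pi_k\psi$, take the conforming comparison pair $(\Pi^{RT}_h\bsigma,I_hu)$, whose vanishing jumps give $J_{\rm dis}=J$ so that \eqref{eq:apriori_step2} applies, then invoke the lower bound of Theorem~\ref{thm:disc_normequiv} and finish with the triangle inequality, \eqref{eq:RT_approx} and \eqref{eq:SZ_approx}.
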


\begin{proof}
Let $\psi_h:=\Pi_k\psi$, $z_h:=(\bsigma_h^{\rm d},u_h^{\rm d})$ and
$z_h^\ast:=(\Pi_h^{RT}\bsigma,I_hu)\in\Sigma_h\times V_h$.  By the observation
above, $z_h$ minimizes $J_{\rm dis}(\cdot;f_1,\bff_2,\phi,\psi_h)$, so
$J_{\rm dis}(z_h-z_h^\ast;0)^{1/2}\le2J_{\rm dis}(z_h^\ast;f_1,\bff_2,\phi,\psi_h)^{1/2}$,
and Theorem~\ref{thm:disc_normequiv} bounds the broken volume norms of
$z_h-z_h^\ast$ by this quantity.  The comparison pair is conforming,
\beq\label{eq:disc_zero_jumps}
 [I_hu]=0,
 \qquad
 [\Pi_h^{RT}\bsigma\cdot\bn_F]=0
 \qquad\text{on every }F\in\cE_I,
\eeq
so $J_{\rm dis}(z_h^\ast;f_1,\bff_2,\phi,\psi_h)=J(z_h^\ast;f_1,\bff_2,\phi,\psi_h)$,
which \eqref{eq:apriori_step2} bounds.  The triangle inequality with
\eqref{eq:RT_approx} and \eqref{eq:SZ_approx} gives \eqref{eq:disc_apriori}.
\end{proof}

\begin{rem}[over-penalization]\label{rem:disc_overpen}
The interior weights $h_F^{-1}$ in \eqref{def:Jdisc} may be replaced by any
weights $\omega_{u,F},\omega_{\sigma,F}$ with
$\omega_{u,F},\omega_{\sigma,F}\ge c\,h_F^{-1}$ for a fixed $c>0$, for instance
$h_F^{-\rho}$ with $\rho\ge1$, provided the same weights are used in the norm
\eqref{def:normdisc}.  Theorem~\ref{thm:disc_normequiv} then holds with a lower
constant depending on $c$ only, because its proof uses the jump terms only
through the companion bounds of Lemma~\ref{lem:disc_companion}, which involve
$h_F^{-1}$.  Theorem~\ref{thm:disc_apriori} holds with the same right-hand side
and a constant depending on $c$ only, the same constant if $c\ge1$, because the comparison pair has zero interior jumps
by \eqref{eq:disc_zero_jumps}, so the weights multiply zero.  Theorem~\ref{thm:disc_apost} holds with the
same weights in $\eta_{\rm d}$ and in the norm, by the same argument.
Over-penalization of the interior jumps therefore leaves the convergence rate
unchanged.
\end{rem}

\begin{rem}[weaker interior weights]\label{rem:disc_weaker}
Let the interior weights $h_F^{-1}$ in \eqref{def:Jdisc} and
\eqref{def:normdisc} be replaced by weights $\omega_{u,F},\omega_{\sigma,F}>0$,
and put
$\kappa_h:=\min_{F\in\cE_I}\min\{\omega_{u,F},\omega_{\sigma,F}\}\,h_F$.  The
proof of Theorem~\ref{thm:disc_apriori} uses the norm equivalence only after
the bound
$$
J_{\rm dis}(z_h-z_h^\ast;0)^{1/2}\le2J_{\rm dis}(z_h^\ast;f_1,\bff_2,\phi,\Pi_k\psi)^{1/2}
\le Ch_{\max}^{s}\big(|u|_{1+s,\O}+|\bsigma|_{s,\O}+|\gradt\bsigma|_{s,\O}\big),
$$
whose right-hand side does not depend on the interior weights, because
$z_h^\ast$ has no jumps.  Hence $z_h-z_h^\ast$ is of order $h_{\max}^{s}$ in the
norm induced by the functional, for every choice of interior weights.  The norm
equivalence is needed to pass from this norm to the norm of
\eqref{eq:disc_apriori}.

If $\kappa_h\le1$, every interior term of the functional is at least $\kappa_h$
times the corresponding term of \eqref{def:Jdisc}, so the lower bound in
\eqref{eq:disc_normequiv} holds with the constant $\kappa_hC_{\rm d}$ for the
broken volume norms.  The proof of Theorem~\ref{thm:disc_apriori} then gives
\eqref{eq:disc_apriori} with its right-hand side multiplied by
$1+\kappa_h^{-1/2}$.  For $\omega_{u,F}=\omega_{\sigma,F}=h_F^{-\rho}$ with
$\rho<1$ on quasi-uniform meshes, $\kappa_h\simeq h_{\max}^{1-\rho}$ and the
resulting bound is of order $h_{\max}^{s-(1-\rho)/2}$; for constant weights it
is of order $h_{\max}^{s-1/2}$.  In the setting of
Proposition~\ref{prop:disc_sharp} the lower constant of the norm equivalence is
indeed of order $\kappa_h$ for constant weights, so the factor cannot be removed
from the norm equivalence.

With a constant weight $\omega_\sigma$, the upper bound of
\eqref{eq:disc_apost} fails for the pair $(\btau_h,v_h)$ in the proof of
Proposition~\ref{prop:disc_sharp}.  For the data $f_1=0$,
$\bff_2=\bzero$, $\phi=0$ and $\psi=0$ the exact solution is zero, the error of
$(\btau_h,v_h)$ is $(\btau_h,v_h)$ itself, and
$\eta_{\rm d}^{2}=\omega_\sigma(M-1)$ is at most $6\omega_\sigma/(M+1)$ times
the square of the middle term of \eqref{eq:disc_apost}.  We therefore use the
weight $h_F^{-1}$: it is the weakest weight for which the norm equivalence
holds, and with it the a priori estimate \eqref{eq:disc_apriori} holds without
a mesh-dependent factor and the error equivalence \eqref{eq:disc_apost}
holds for every discrete pair.
\end{rem}

\subsection{Relation with over-penalized discontinuous Galerkin and least-squares methods}
Theorem~\ref{thm:disc_apriori} puts several familiar observations into one framework.
For scalar jumps, the weakly over-penalized symmetric interior-penalty methods
of Brenner, Owens and Sung \cite{BOS:08,BOS:12}, recalled in
Remark~\ref{rem:three_roles}, are a precedent.

Classical discontinuous LSFEMs likewise employ strong face penalties, including
$h_F^{-1}$ normal-flux jump terms; see Ye--Zhang \cite{YZ:19}.  For the
diffusion equation, the discontinuous variant of the method of Bensow--Larson
\cite[Sect.~3.3.1]{BL:05} uses all weights of \eqref{def:Jdisc}, and
Bensow--Larson \cite[Lemma~3.3]{BL:05} already compare
with a conforming interpolant, a Brezzi--Douglas--Marini field together with a
Scott--Zhang interpolant, so that all interior face terms vanish.
Theorem~\ref{thm:disc_apriori} uses the same mechanism for general
second-order elliptic equations with weakly imposed mixed boundary conditions,
and shows why such weights, and any stronger ones as in
Remark~\ref{rem:disc_overpen}, are compatible with optimal approximation.

Bringmann \cite{Bri:26} imposes the extra constraint that the lowest moments
of the normal jumps vanish, which permits the weight $h_F$ on the flux jump but
complicates the computation.
Theorems~\ref{thm:disc_apost} and~\ref{thm:disc_apriori} show that this
constraint is not needed: the method is a pure penalty method with no extra
mechanism, and on the unconstrained spaces with the weight $h_F^{-1}$, the error equivalence holds for every discrete pair and the a
priori estimate is optimal.

For the boundary residuals and the interior jumps alike, stability determines
how much penalty is necessary,
compatibility determines whether a stronger penalty enters the approximation,
and conditioning determines its algebraic cost.  The first of these is sharp
for the weak boundary residuals by Theorem~\ref{thm:threshold} and
Proposition~\ref{rem:other_exponents}.  The second is settled for both boundary
residuals by Theorem~\ref{rem:overpenalize} and, in the fully discontinuous
setting, for both interior jumps by Theorem~\ref{thm:disc_apriori} and
Remark~\ref{rem:disc_overpen}.  Once a
stable method admits a zero-defect comparison, the penalty has already done its
stability job and disappears from the approximation argument.

\begin{rem}[the original Crouzeix--Raviart LSFEM is a different endpoint]
\label{rem:cr_clean}
The Crouzeix--Raviart space carries the built-in continuity
$\int_F[v_h]\,ds=0$, $F\in\cE_I$, which the standard Crouzeix--Raviart method
uses in place of a jump penalty.  The least-squares method of
\cite{LiangZhang:23}, with $H(\divvr)$-conforming fluxes and Crouzeix--Raviart
scalars, was designed in the same spirit: its functional contains only the
element-wise residuals.  For general, possibly indefinite, equations its
analysis uses discrete stability for sufficiently fine meshes
\cite{LiangZhang:23}.  Theorem~\ref{thm:disc_normequiv} concerns completely
unconstrained broken spaces, while the Crouzeix--Raviart space carries a
face-moment constraint, so its zero-penalty stability is a different endpoint.
\end{rem}

\section{Numerical experiments}
\label{sec:numerics}

All experiments use $k=0$, that is, lowest-order Raviart--Thomas fluxes and
piecewise linear scalars, conforming or broken according to the method.  The errors are computed with a quadrature
rule exact for polynomials of degree ten, on elements subdivided geometrically
towards a singular point of the exact solution.  On uniform meshes, rates are
$\log_2$ of the ratio of the values on consecutive levels.  Adaptive meshes are
generated by Dörfler marking with parameter $1/2$ and newest vertex bisection;
there, rates are mean slopes over the last four iterations with respect to the
number $N$ of unknowns, and the optimal rate is $N^{-1/2}$.

\subsection{Weakly imposed boundary conditions}\label{sec:num_weak}
Let $\O=(0,1)^{2}$ with $\G_D:=\{0,1\}\times(0,1)$ and
$\G_N:=(0,1)\times\{0,1\}$, let $A=I$, $\bb=\bzero$,
$\bbeta=(1,\tfrac12)^{T}$, $\mu=1$ and $\bff_2=\bzero$, and choose $f_1$,
$\phi$ and $\psi$ so that $u(x,y)=e^{x}\cos(\pi y)+xy$; the Neumann datum
$\psi(x,0)=x$, $\psi(x,1)=-x$ is not piecewise constant, so
$\psi\neq\Pi_0\psi$.  The initial mesh consists of the four triangles obtained
by joining the center of the square to its vertices, and each refinement
divides every triangle into four, so that on level $\ell$ every boundary face
has $h_F=h_{\max}=2^{-\ell}$.  For the minimizer $(\bsigma_h,u_h)$ of
$J_{\alpha,\beta}$ we report
$$
e_h:=\big(\|u-u_h\|^{2}_{1,\O}+\|\bsigma-\bsigma_h\|^{2}_{H(\divvr;\O)}\big)^{1/2},
\qquad
\eta_{\alpha,\beta}:=J_{\alpha,\beta}(\bsigma_h,u_h;f_1,\bff_2,\phi,\psi)^{1/2},
$$
so that $\eta_{1,0}=\eta_h$, and the square roots $\eta_D$ and $\eta_N$ of the
Dirichlet and Neumann terms of $\eta_{\alpha,\beta}^{2}$.

\begin{figure}[ht]
\centering
\includegraphics[width=0.40\textwidth]{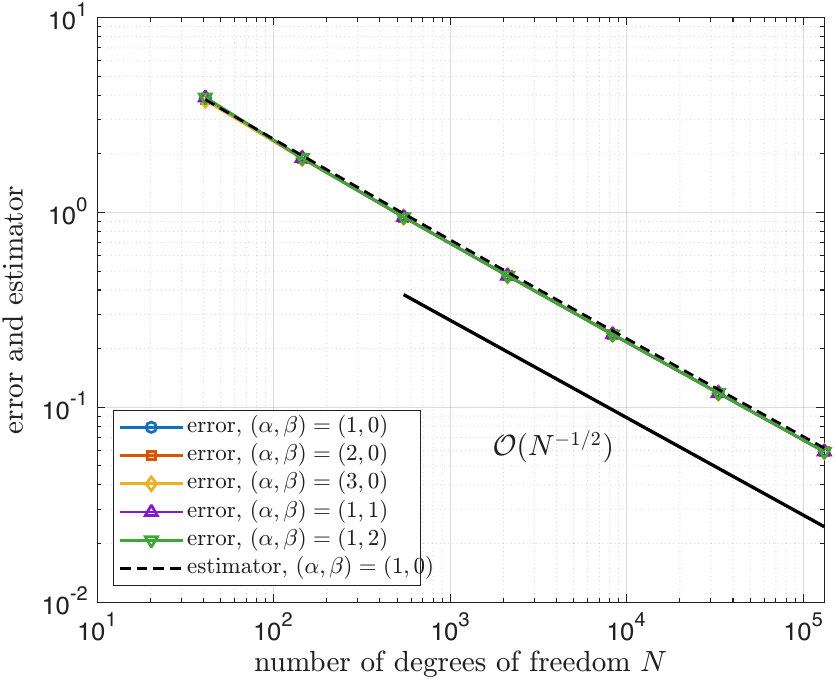}\qquad
\includegraphics[width=0.40\textwidth]{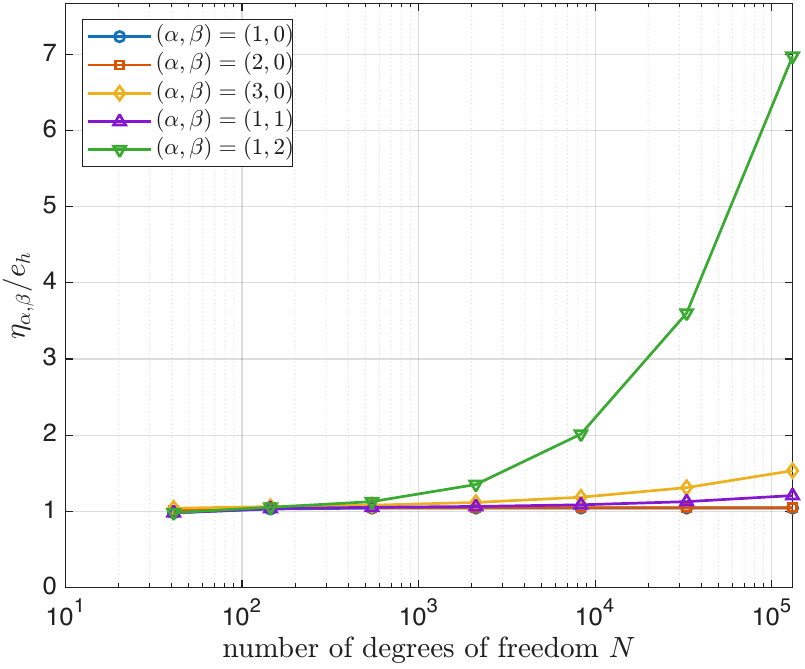}
\caption{Weakly imposed boundary conditions on uniform meshes: the error $e_h$
of the minimizers of $J_{\alpha,\beta}$ and the estimator $\eta_h$ of
\eqref{eq:ls_min} (left), and the ratio $\eta_{\alpha,\beta}/e_h$ (right).}
\label{fig:weakBC}
\end{figure}

For $(\alpha,\beta)=(1,0)$, the method \eqref{eq:ls_min}, the error converges
with the rate $h_{\max}$ of Theorem~\ref{thm:apriori} and $\eta_h/e_h$ settles
at $1.043$, see Figure~\ref{fig:weakBC}; in the norm
$\tri(\bsigma-\bsigma_h,u-u_h)\tri=(e_h^{2}+\eta_D^{2}+\eta_N^{2})^{1/2}$ of
Theorem~\ref{thm:aposteriori} the efficiency index is $1.04$.  The data term
of \eqref{eq:reliability} is of order $h_{\max}^{3/2}$, by
Lemma~\ref{lem:oscD} with $w_h$ the nodal interpolant of $\phi$.

Since all faces in $\cE_D$ have the same size, $P^{\alpha}_h=P_h$, the data term
of \eqref{eq:apriori_overpen} is of order $h_{\max}^{3/2}$, and
Theorem~\ref{rem:overpenalize} gives the rate $h_{\max}$ for every
$\alpha\ge1$ and $\beta\ge0$: the error curves of $(1,0)$, $(2,0)$, $(3,0)$,
$(1,1)$ and $(1,2)$ agree in four digits from level $5$ on, and the difference
between each minimizer and that of $(1,0)$ decreases with rate about two.  The
over-penalized functional, however, is dominated by the boundary data measured
with the larger weights: by \eqref{eq:pythagoras_N} and
\eqref{eq:pythagoras_D}, $\eta_N^{2}\ge h_{\max}^{2-\beta}/6$ and
$\eta_D^{2}\ge\sum_{F\in\cE_D}h_F^{-\alpha}\|\phi-P^{\alpha}_h\phi\|^{2}_{0,F}$,
which is of order $h_{\max}^{4-\alpha}$, and on level $7$ the ratio
$\eta_{\alpha,\beta}/e_h$ is $1.53$ for $(3,0)$, $1.21$ for $(1,1)$ and $6.98$
for $(1,2)$.  This is the distinction of Remark~\ref{rem:zero_defect}: the a
priori estimate is in $H(\divvr;\O)\times H^{1}(\O)$, and the functional with
$(\alpha,\beta)=(1,0)$ is the a posteriori estimator.

The thresholds of Proposition~\ref{rem:other_exponents} are visible in the
smallest eigenvalue
$$
\lambda_{\min}:=\min_{0\neq(\btau_h,v_h)\in\Sigma_h\times V_h}
\frac{J_{\alpha,\beta}(\btau_h,v_h;0)}{\|\btau_h\|^{2}_{H(\divvr;\O)}+\|v_h\|^{2}_{1,\O}},
$$
see Figure~\ref{fig:threshold}: from level $4$ on it lies between $0.30$ and
$0.32$ for $\alpha=1,2$ with $\beta=0$ and for $\beta=0,1$ with $\alpha=1$,
whereas it decreases for $\alpha=0,\frac12$ and for $\beta=-1,-\frac12$, for
$\beta<0$ with a rate approaching $-\beta$, that of the flux example in the
proof of Proposition~\ref{rem:other_exponents}.

\begin{figure}[ht]
\centering
\includegraphics[width=0.40\textwidth]{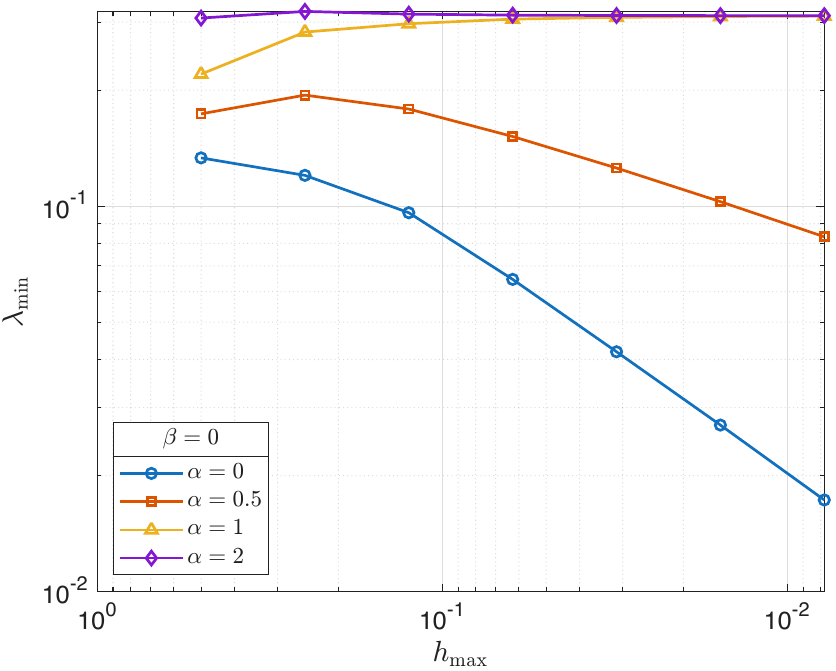}\qquad
\includegraphics[width=0.40\textwidth]{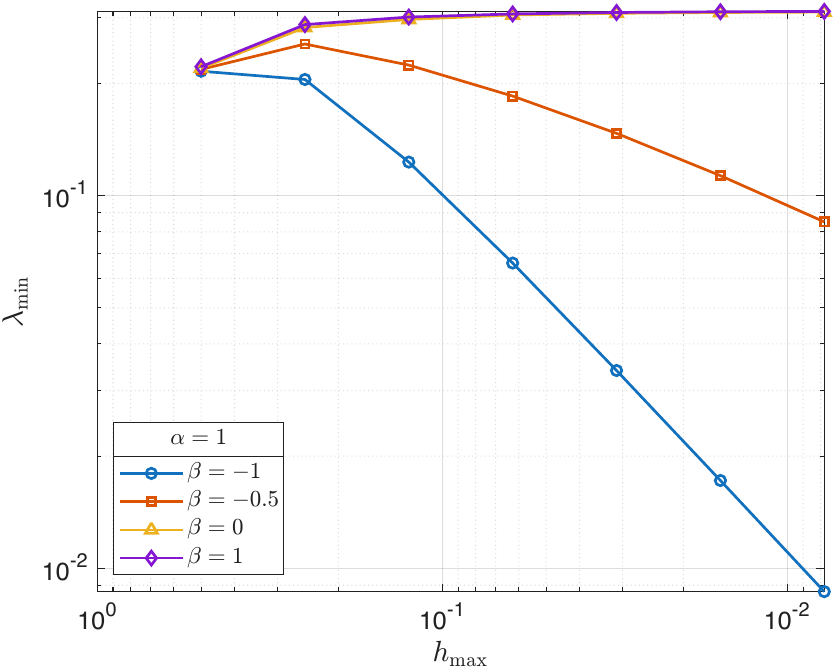}
\caption{The smallest eigenvalue $\lambda_{\min}$ of $J_{\alpha,\beta}(\cdot;0)$
with respect to $\|\cdot\|^{2}_{H(\divvr;\O)}+\|\cdot\|^{2}_{1,\O}$ on
$\Sigma_h\times V_h$, uniform meshes: $\beta=0$ (left) and $\alpha=1$ (right).}
\label{fig:threshold}
\end{figure}

\FloatBarrier
\subsection{Dirichlet data below \texorpdfstring{$H^{1}(\G_D)$}{H1(Gamma\_D)}}\label{sec:num_rough}
We keep the method \eqref{eq:ls_min}, the domain and the initial mesh of
Section~\ref{sec:num_weak}, and let $A=I$, $\bb=\bbeta=\bzero$, $\mu=0$,
$f_1=0$, $\bff_2=\bzero$ and $u=\mathrm{Re}\,z^{\nu}$ with
$z=(y-\tfrac12)+\mathrm{i}\,x$ and $\nu=\tfrac14$, which is singular at
$P=(0,\tfrac12)$.  On the side $x=0$ of $\G_D$ the datum is
$\phi=|y-\tfrac12|^{\nu}$ for $y>\tfrac12$ and
$\phi=\cos(\nu\pi)|y-\tfrac12|^{\nu}$ for $y<\tfrac12$, so
$\phi\in H^{1/2+t}(\G_D)$ for every $t<\nu$ but $\phi\notin H^{1}(\G_D)$, and
Remark~\ref{rem:oscD} applies; moreover $u\in H^{1+t}(\O)$ for $t<\nu$.  We
also report the computable part of the left-hand side of
\eqref{eq:oscD_bound} for $Q_h=P_h$,
$$
d_h:=\Big(\sum_{F\in\cE_D}h_F^{-1}\|\phi-P_h\phi\|^{2}_{0,F}\Big)^{1/2}.
$$

\begin{figure}[ht]
\centering
\includegraphics[width=0.40\textwidth]{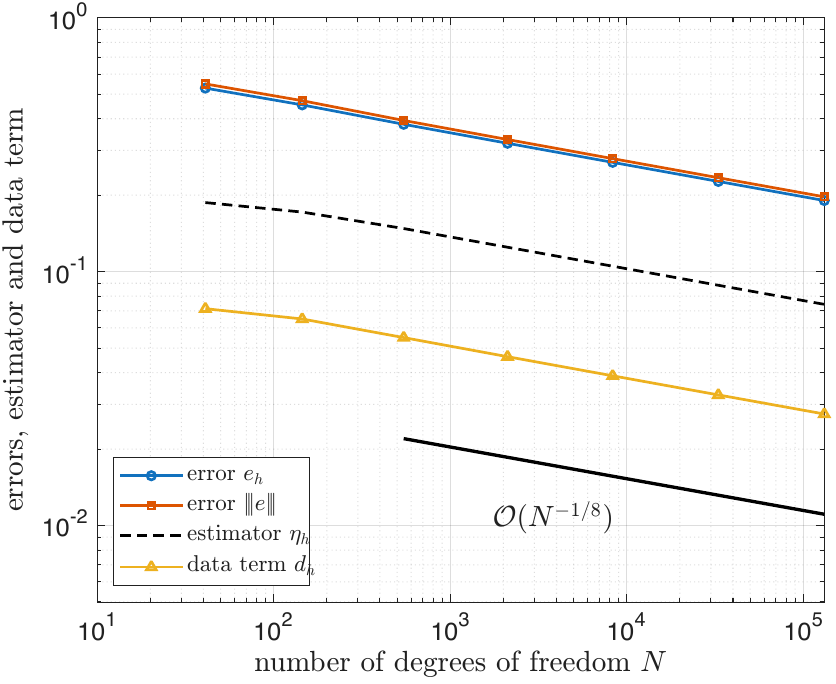}\qquad
\includegraphics[width=0.40\textwidth]{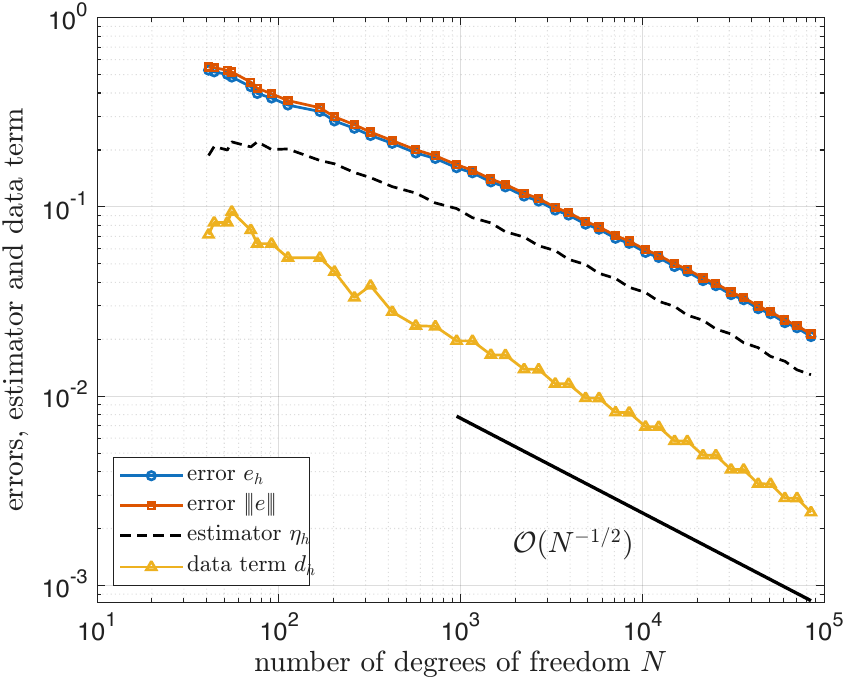}
\caption{Dirichlet data in $H^{1/2+t}(\G_D)$, $t<\tfrac14$: the errors $e_h$
and $\tri e\tri:=\tri(\bsigma-\bsigma_h,u-u_h)\tri$, the estimator $\eta_h$ and the data
term $d_h$ on uniform meshes (left) and on adaptive meshes generated with the
indicators $\eta_K^{2}+d_{h,K}^{2}$ (right).}
\label{fig:roughD}
\end{figure}

On uniform meshes the error, the estimator and $d_h$ decrease like
$h_{\max}^{1/4}$, that is, like $N^{-1/8}$, in agreement with
Theorem~\ref{thm:apriori} and Remark~\ref{rem:oscD} for every $s,t<\nu$, and
$\eta_h/e_h=0.390$ from level $4$ on, see Figure~\ref{fig:roughD}.  On
adaptive meshes generated with the indicators $\eta_K^{2}+d_{h,K}^{2}$, where
$\eta_K^{2}$ collects the terms of $\eta_h^{2}$ on $K$ and $d_{h,K}^{2}$ those
of $d_h^{2}$ on the faces of $K$, the meshes are refined towards $P$ and the
optimal rate is recovered: over the last four of $40$ iterations the error
decreases like $N^{-0.500}$, the estimator like $N^{-0.504}$ and $d_h$ like
$N^{-0.483}$, and from iteration $10$ on $\eta_h/e_h$ lies between $0.57$ and
$0.63$.

\subsection{Approximations that violate the boundary conditions}\label{sec:num_galerkin}
We apply Section~\ref{sec:nonintrusive} to the $P_1$ Galerkin approximation
$u_h^{\rm G}\in V_h$ of \eqref{weak} with the Dirichlet condition imposed by
nodal interpolation of $\phi$, so that $u_h^{\rm G}\neq\phi$ on $\G_D$ in
general; $\bsigma_h^{\rm G}$ is the solution of \eqref{eq:recovery} for
$\hat u_h=u_h^{\rm G}$, and $\eta^{\rm G}:=\hat\eta$.  We report the error
$e_h^{\rm G}$ in the norm of $e_h$, the Dirichlet part $\eta_D^{\rm G}$ of
$\eta^{\rm G}$, and
$\tri e^{\rm G}\tri:=\tri(\bsigma-\bsigma_h^{\rm G},u-u_h^{\rm G})\tri$.  For
the smooth solution of Section~\ref{sec:num_weak},
$\eta^{\rm G}/\tri e^{\rm G}\tri$ lies between $1.037$ and $1.042$ from level
$3$ on, as for the least-squares solution.  For the datum of
Section~\ref{sec:num_rough}, nodal interpolation violates the Dirichlet
condition considerably: on uniform meshes $\eta_D^{\rm G}/e_h^{\rm G}\approx0.45$,
and $\eta^{\rm G}/\tri e^{\rm G}\tri$ lies between $0.856$ and $0.859$ from
level $2$ on.

\begin{figure}[ht]
\centering
\includegraphics[width=0.40\textwidth]{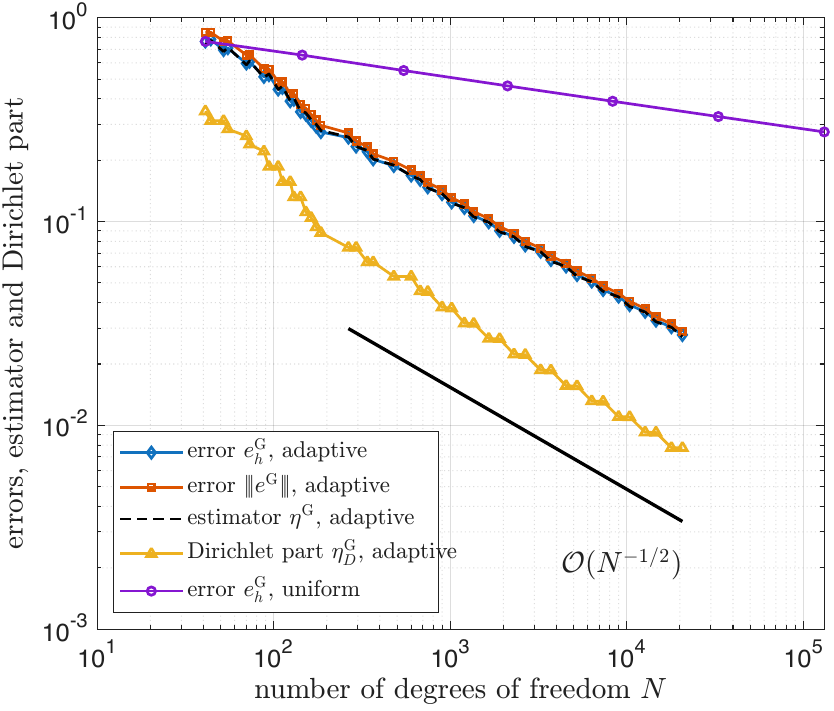}\qquad
\includegraphics[width=0.40\textwidth]{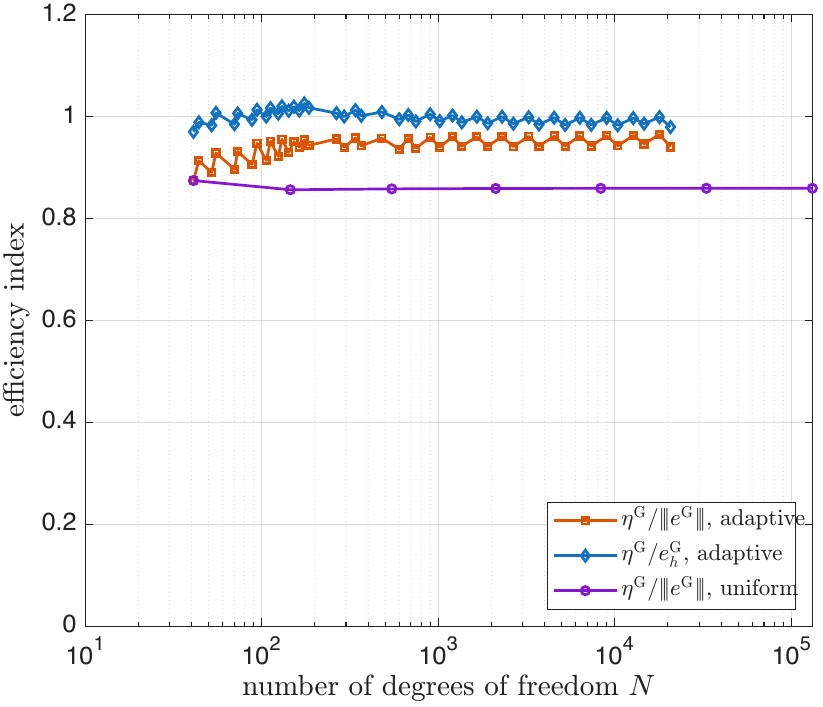}
\caption{Galerkin approximation with nodal Dirichlet data, Dirichlet datum of
Section~\ref{sec:num_rough}: the errors $e_h^{\rm G}$ and $\tri e^{\rm G}\tri$,
the estimator $\eta^{\rm G}$ and its Dirichlet part $\eta_D^{\rm G}$ on
adaptive meshes, and $e_h^{\rm G}$ on uniform meshes (left); efficiency
indices (right).}
\label{fig:galerkin}
\end{figure}

On adaptive meshes generated with the element indicators of $\eta^{\rm G}$,
the error and the estimator decrease like $N^{-0.517}$ and $N^{-0.537}$ over
the last four of $45$ iterations, and from iteration $10$ on
$\eta^{\rm G}/\tri e^{\rm G}\tri$ lies between $0.92$ and $0.97$, see
Figure~\ref{fig:galerkin}.  In agreement with Theorem~\ref{thm:nonintrusive},
the functional is a reliable and efficient error estimator also for
approximations that violate the boundary conditions.

\FloatBarrier
\subsection{The fully discontinuous method}\label{sec:num_dis}
We use the data and meshes of Section~\ref{sec:num_weak}, the broken spaces
$\Sigma_h^{\rm b}\times V_h^{\rm b}$, the boundary weights of
\eqref{def:Jdisc}, and the interior weights $h_F^{-\rho_u}$ and
$h_F^{-\rho_\sigma}$ on the scalar and the flux jumps, so that
$(\rho_u,\rho_\sigma)=(1,1)$ is \eqref{def:Jdisc}.  We report
$e_h^{\rm d}:=\big(\|u-u_h^{\rm d}\|^{2}_{1,h}+\|\bsigma-\bsigma_h^{\rm d}\|^{2}_{H(\divvr,h)}\big)^{1/2}$,
the value $\eta_{\rm d}$ of the functional at the minimizer, and
$\eta_{\rm d}/E_h^{\rm d}$, where $E_h^{\rm d}$ is the middle term of
\eqref{eq:disc_apost} with the same interior weights.

\begin{figure}[ht]
\centering
\includegraphics[width=0.40\textwidth]{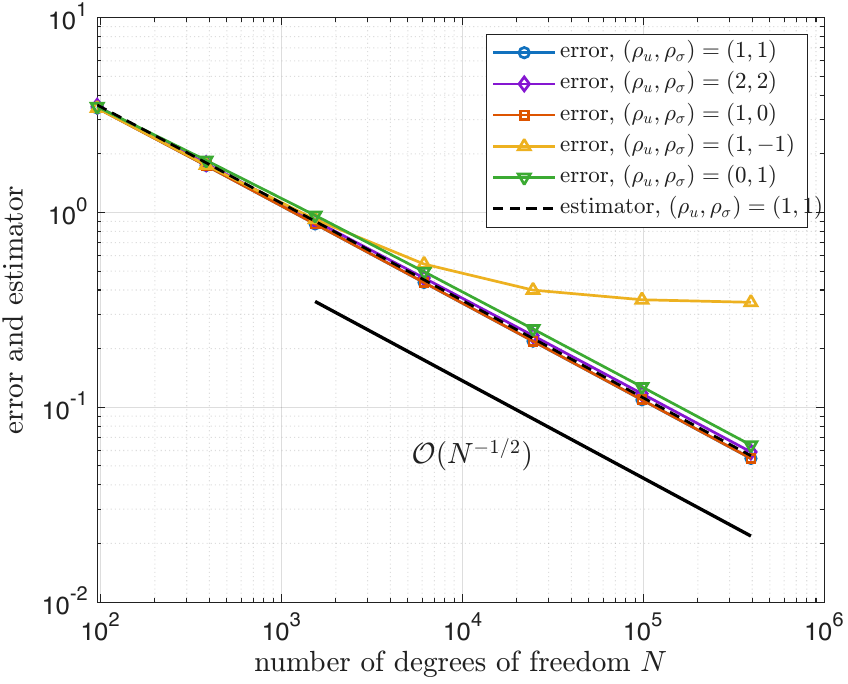}\qquad
\includegraphics[width=0.40\textwidth]{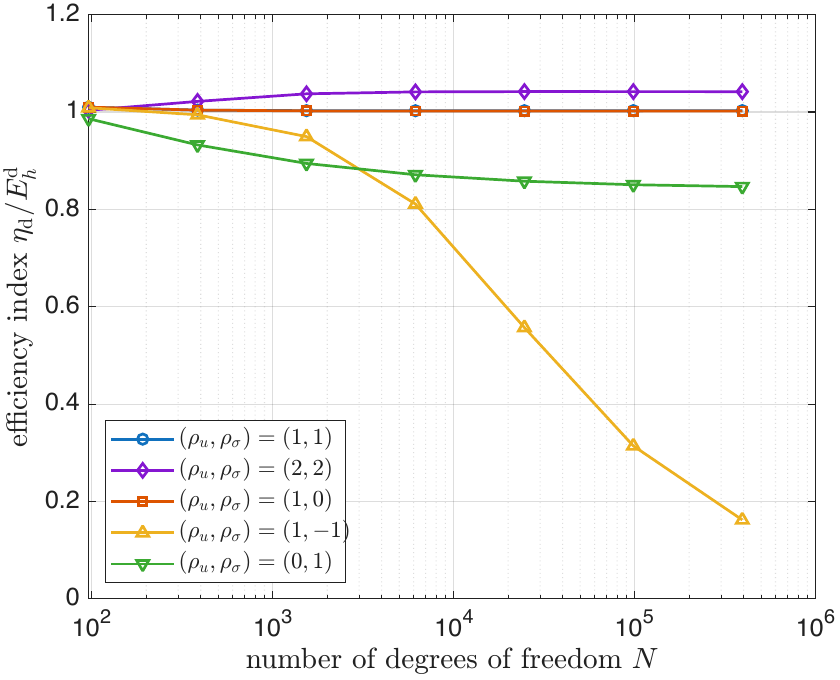}
\caption{The fully discontinuous method on uniform meshes with the interior
weights $h_F^{-\rho_u}$ and $h_F^{-\rho_\sigma}$: the error $e_h^{\rm d}$, and
the estimator $\eta_{\rm d}$ for $(\rho_u,\rho_\sigma)=(1,1)$ (left); the
efficiency index $\eta_{\rm d}/E_h^{\rm d}$ (right).}
\label{fig:dis}
\end{figure}

For $(1,1)$ the error converges with the rate $h_{\max}$ of
Theorem~\ref{thm:disc_apriori}, and $\eta_{\rm d}/E_h^{\rm d}=1.003$ from level
$3$ on, in line with Theorem~\ref{thm:disc_apost}, see Figure~\ref{fig:dis};
the over-penalized pair $(2,2)$ keeps the rate, as stated in
Remark~\ref{rem:disc_overpen}.  With the weight one on either jump, $(1,0)$ or
$(0,1)$, the minimizer still converges with rate one, better than the bound of
order $h_{\max}^{1/2}$ of Remark~\ref{rem:disc_weaker}.  The pair $(1,-1)$
puts the weight $h_F$ of \cite{Bri:26} on the flux jump without the constraint
on its lowest moments; then $e_h^{\rm d}$ stagnates near $0.35$ and
$\eta_{\rm d}/E_h^{\rm d}$ falls to $0.162$ on level $7$, so the optimal rate
and the error equivalence are lost, in line with the bound of order one that
Remark~\ref{rem:disc_weaker} gives for this pair.

The norm equivalence itself fails for the weaker flux weights, as
Proposition~\ref{prop:disc_sharp} states.  For $A=I$, $\bb=\bbeta=\bzero$,
$\mu=0$ and $w_h:=(-\nabla_hI_hg,\,I_hg)$ with $I_h$ the nodal interpolant and
$g(x,y)=\sin(\pi x)$, the volume residuals and the scalar jumps vanish, $w_h$
satisfies the Dirichlet condition, and its flux jumps and its normal trace on
$\G_N$ are of order $h_{\max}$.  The ratio of $J_{\rm dis}(w_h;0)$ to
$\|I_hg\|^{2}_{1,h}+\|\nabla_hI_hg\|^{2}_{H(\divvr,h)}$ is bounded for
$\rho_\sigma=1$ and decreases like $h_{\max}$ and $h_{\max}^{2}$ for
$\rho_\sigma=0$ and $\rho_\sigma=-1$.  That the minimizer with $\rho_\sigma=0$
nevertheless converges is the gap between the norm equivalence and the
convergence of the minimizer described in Remark~\ref{rem:disc_weaker}.  The
spectral condition number of the matrix of the normal equations is smallest for
$\rho_\sigma=1$ among the weights tested.

\FloatBarrier
\subsection{Singular solutions on adaptive meshes}
The adaptive meshes are generated by the fully discontinuous method with
$(\rho_u,\rho_\sigma)=(1,1)$, every interior face term of the indicators being
split equally between the two elements sharing the face; on the same meshes the
problem is also solved with the flux-jump weights $1$ and $h_F$.

\emph{Corner singularity.}  Let
$\O=(-1,1)^{2}\setminus\big([0,1)\times(-1,0]\big)$ be the L-shaped domain,
$A=I$, $\bb=\bbeta=\bzero$, $\mu=0$, $f_1=0$, $\bff_2=\bzero$, and
$u=r^{2/3}\sin(2\theta/3)$ in polar coordinates, with the Dirichlet condition
on the whole boundary; the analysis holds for $\G_N=\emptyset$ with all
Neumann terms omitted.  Over the last four of $26$ iterations the errors for
$\rho_\sigma=1$ and $\rho_\sigma=0$ decrease like $N^{-0.514}$ and
$N^{-0.513}$ and the estimator for $\rho_\sigma=1$ like $N^{-0.503}$, and for
$N\ge10^{4}$ the efficiency indices for both weights lie between $0.54$ and
$0.57$, see Figure~\ref{fig:Lshape}.  For $\rho_\sigma=-1$ the error stagnates
near $0.075$.

\begin{figure}[ht]
\centering
\includegraphics[width=0.40\textwidth]{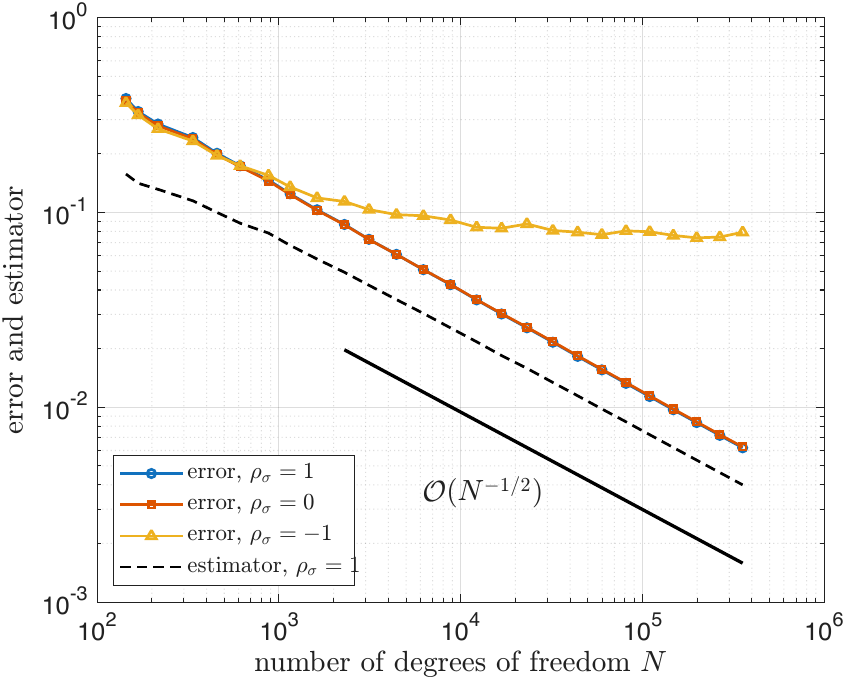}\qquad
\includegraphics[width=0.40\textwidth]{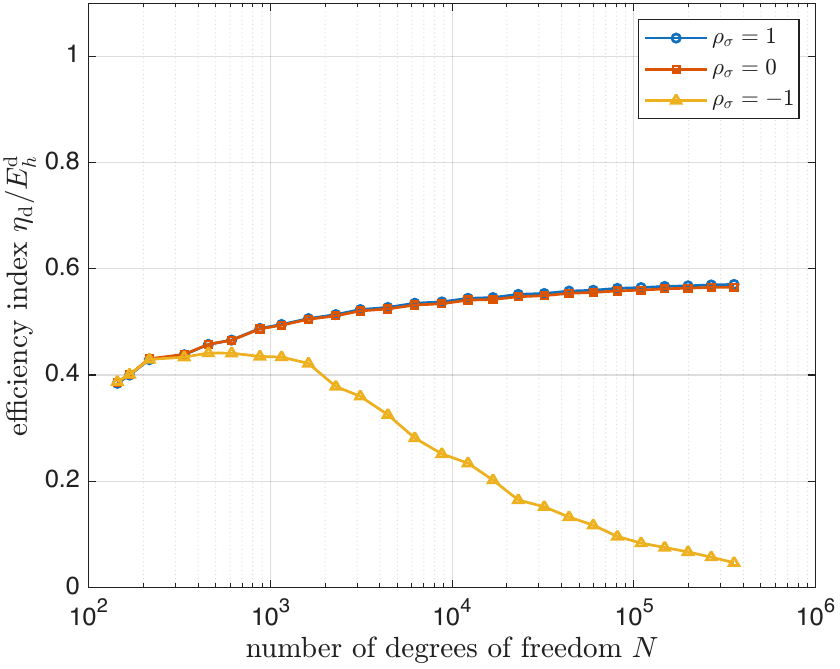}
\caption{L-shaped domain, adaptive meshes: the error $e_h^{\rm d}$ for three
flux-jump weights $h_F^{-\rho_\sigma}$ on the same meshes, and the estimator
$\eta_{\rm d}$ for $\rho_\sigma=1$ (left); the efficiency index
$\eta_{\rm d}/E_h^{\rm d}$ (right).}
\label{fig:Lshape}
\end{figure}

\emph{Discontinuous coefficients.}  Let $\O=(-1,1)^{2}$, $A=aI$ with $a=R$ in
the first and third quadrants and $a=1$ in the second and fourth,
$\bb=\bbeta=\bzero$, $\mu=0$, $f_1=0$ and $\bff_2=\bzero$.  The solution of
Kellogg \cite{Kel:74}, a standard test for robust a posteriori error
estimators on adaptive meshes \cite{CZ:09,CZ:10b}, is $u=r^{\gamma}m(\theta)$
in polar coordinates, with a continuous, piecewise trigonometric function $m$.
We use $\gamma=\tfrac14$ with $R=\cot^{2}(\pi/16)\approx25.27$ and
$\gamma=\tfrac12$ with $R=\cot^{2}(\pi/8)=3+2\sqrt2$, and refer to
\cite{ZhangKel:26} for $m$ and for the relation between $\gamma$ and $R$.  The
Dirichlet condition is imposed on the whole boundary.  Then $u\in H^{1+s}(\O)$
and $\bsigma=-a\nabla u\in H^{s}(\O)^{2}$, with $\gradt\bsigma=0$, exactly for
$s<\gamma\le\tfrac12$.  Since $|\bsigma|$ is of order $r^{\gamma-1}$, the
normal trace of $\bsigma$ on the faces through the origin is in general not
square integrable; the fully discontinuous method and its analysis use only
jumps of discrete functions.  The divergence residual, the Dirichlet term and
the jump terms of \eqref{def:Jdisc} are scaled by the coefficient:
\begin{align*}
J_{\rm w}(\btau_h,v_h;\phi)
:={}&\|a^{-\half}(\btau_h+a\nabla_hv_h)\|_{0,\O}^{2}
+\|a^{-\half}\divvr_h\btau_h\|_{0,\O}^{2}
+\sum_{F\in\cE_D}a_{K_F}h_F^{-1}\|v_h-\phi\|_{0,F}^{2}\\
&+\sum_{F\in\cE_I}\Big(a^{\max}_{F}h_F^{-1}\|[v_h]\|_{0,F}^{2}
+(a^{\min}_{F})^{-1}h_F^{-\rho_\sigma}\|[\btau_h\cdot\bn_F]\|_{0,F}^{2}\Big),
\end{align*}
where $a^{\max}_{F}$ and $a^{\min}_{F}$ are the larger and the smaller value of
$a$ on the two elements sharing $F$.  For $\rho_\sigma=1$ the weights of
$J_{\rm w}$ differ from those of \eqref{def:Jdisc} by factors between $R^{-1}$
and $R$, so the results of Section~\ref{sec:dlsfem} apply to $J_{\rm w}$ with
constants that may depend on $R$; the scaling is meant to reduce this
dependence.  With $(\bsigma_h^{\rm d},u_h^{\rm d})$ now the minimizer of
$J_{\rm w}$, we report
$$
e_h^{\rm w}:=\Big(\|u-u_h^{\rm d}\|^{2}_{0,\O}
+\|a^{\half}\nabla_h(u-u_h^{\rm d})\|^{2}_{0,\O}
+\|a^{-\half}(\bsigma-\bsigma_h^{\rm d})\|^{2}_{0,\O}
+\|a^{-\half}\divvr_h(\bsigma-\bsigma_h^{\rm d})\|^{2}_{0,\O}\Big)^{1/2},
$$
the estimator $\eta_{\rm w}:=J_{\rm w}(\bsigma_h^{\rm d},u_h^{\rm d};\phi)^{1/2}$,
and $\eta_{\rm w}/E_h^{\rm w}$, where $(E_h^{\rm w})^{2}$ is
$(e_h^{\rm w})^{2}$ plus the jump and boundary terms of $J_{\rm w}$ at
$(\bsigma_h^{\rm d},u_h^{\rm d})$.  The meshes are generated by $J_{\rm w}$
with $\rho_\sigma=1$, starting from the mesh of eight triangles of
$(-1,1)^{2}$ whose edges contain the two axes, refined uniformly once.

\begin{figure}[ht]
\centering
\includegraphics[width=0.40\textwidth]{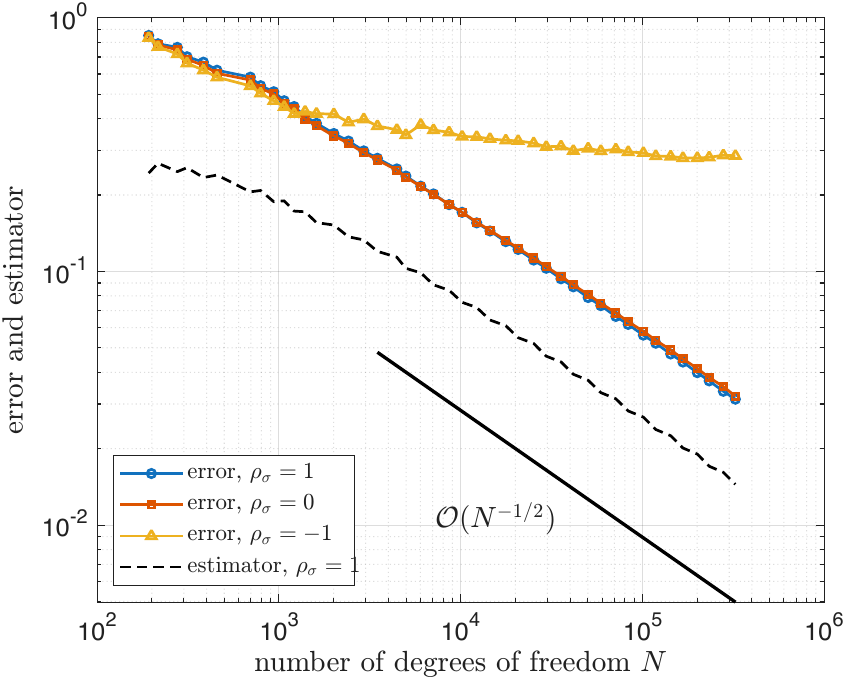}\qquad
\includegraphics[width=0.40\textwidth]{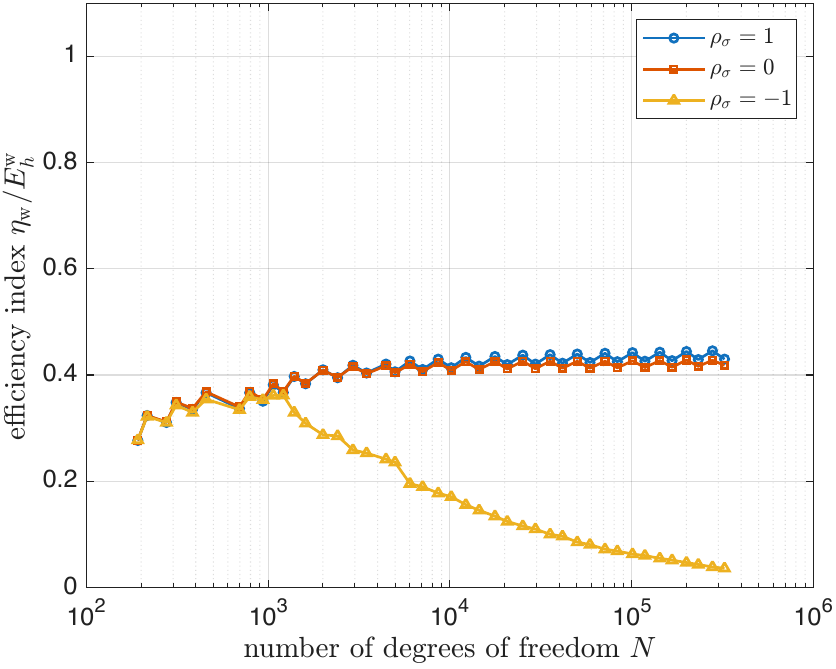}
\caption{Kellogg problem with $\gamma=\tfrac14$, adaptive meshes generated by
$J_{\rm w}$ with $\rho_\sigma=1$: the error $e_h^{\rm w}$ for three flux-jump
weights $h_F^{-\rho_\sigma}$ on the same meshes, and the estimator
$\eta_{\rm w}$ for $\rho_\sigma=1$ (left); the efficiency index
$\eta_{\rm w}/E_h^{\rm w}$ (right).}
\label{fig:kellogg}
\end{figure}

For $\gamma=\tfrac14$ the errors for $\rho_\sigma=1$ and $\rho_\sigma=0$
decrease like $N^{-0.505}$ and $N^{-0.514}$, the estimator for $\rho_\sigma=1$
like $N^{-0.514}$, and for $N\ge10^{4}$ the efficiency indices for both
weights lie between $0.40$ and $0.45$, see Figure~\ref{fig:kellogg}; for
$\gamma=\tfrac12$ the errors decrease like $N^{-0.525}$, the estimator like
$N^{-0.511}$, and the indices lie between $0.52$ and $0.56$.  For
$\rho_\sigma=-1$ the error stagnates in both cases.  The meshes in
Figure~\ref{fig:meshes} are refined radially towards the origin, the structure
that \cite{ZhangKel:26} derives for meshes equidistributing the
coefficient-weighted error.

\begin{figure}[ht]
\centering
\includegraphics[width=0.20\textwidth]{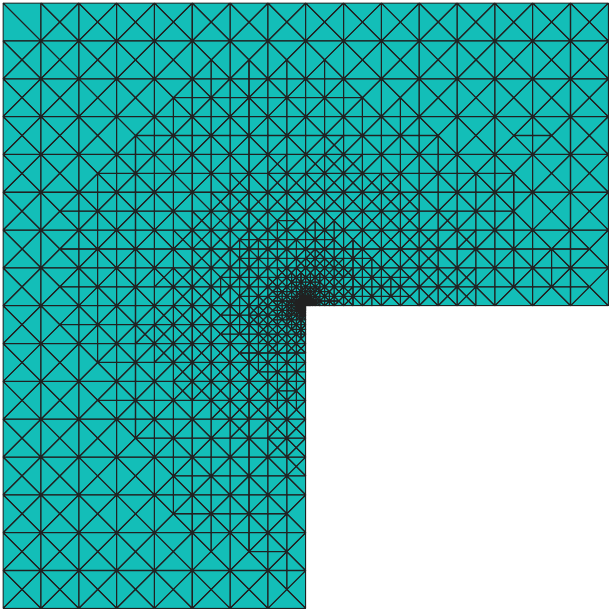}\quad
\includegraphics[width=0.20\textwidth]{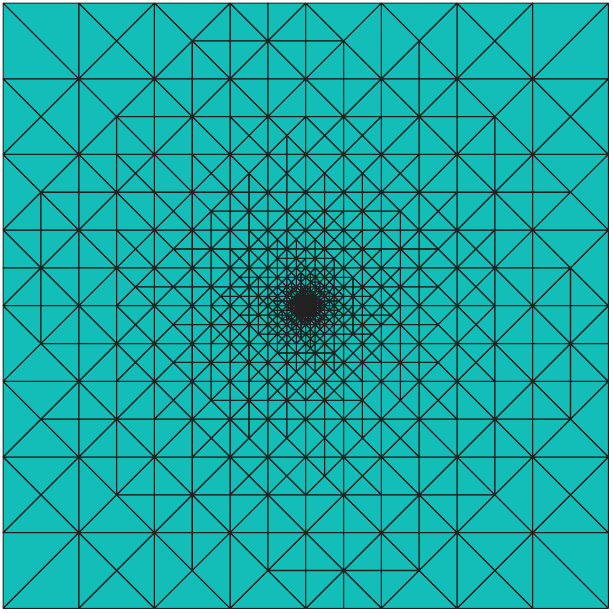}\quad
\includegraphics[width=0.20\textwidth]{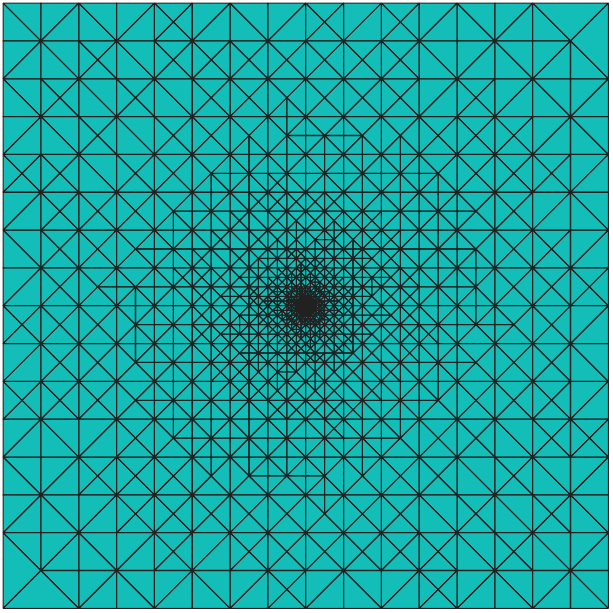}\quad
\includegraphics[width=0.20\textwidth]{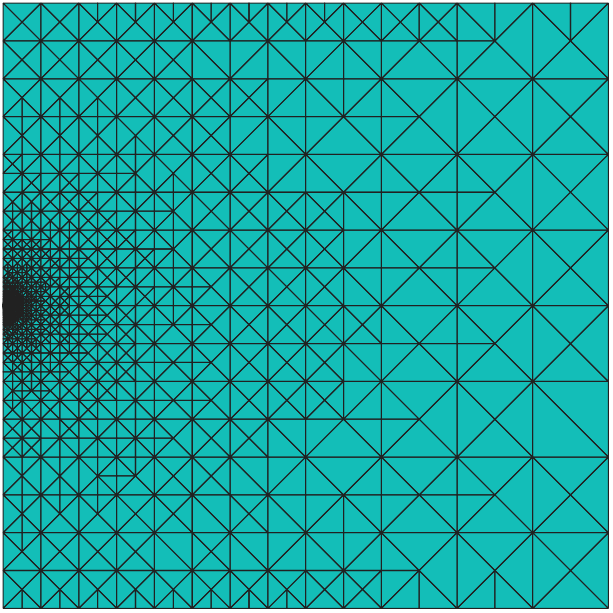}
\caption{Adaptive meshes, from left to right: L-shaped domain ($2038$
elements), Kellogg problem with $\gamma=\tfrac14$ ($2040$ elements) and
$\gamma=\tfrac12$ ($2180$ elements), and the Dirichlet datum of
Section~\ref{sec:num_rough} ($2365$ elements).}
\label{fig:meshes}
\end{figure}

For $\rho_\sigma=1$ the efficiency indices stay bounded in both examples, also
for the Kellogg flux, whose normal trace on the faces through the origin is
not square integrable, consistent with Theorem~\ref{thm:disc_apost}, which uses
jumps of discrete functions only.

\end{document}